\documentclass{mcom-l}

\usepackage{lipsum}
\usepackage{amsfonts}
\usepackage{algorithmic}
\usepackage{amsmath}
\usepackage{bm}
\usepackage{amssymb}
\usepackage{mathrsfs}
\usepackage{xcolor}
\usepackage{color}
\usepackage{url}
\usepackage{cases}
\usepackage{float}
\usepackage{booktabs}
\usepackage{anyfontsize}
\usepackage{newtxtext,newtxmath}
\usepackage{subcaption}

\usepackage{graphicx} % 插入图片的核心包

\usepackage[colorlinks,
            linkcolor=red,
            anchorcolor=green,
            citecolor=blue
            ]{hyperref} 
\AtBeginDocument{
  \label{CorrectFirstPageLabel}
  
}

\newtheorem{theorem}{Theorem}[section]
\newtheorem{lemma}{Lemma}[section]
\newtheorem{hypothesis}{Hypothesis}[section]

\newtheorem{remark}{Remark}[section]

\usepackage{amsopn}

\newcommand{\R}{\mathbb{R}}
\newcommand{\N}{\mathbb{N}}

\usepackage{hyperref}

\numberwithin{equation}{section}

\begin{document}

% \title[short text for running head]{full title}
\title[Optimal error analysis for high-order Navier-Stokes equations]%{Error analysis of high-order IMEX fully discrete finite element schemes for the incompressible Navier-Stokes  system}
%{Analysis of fully discrete IMEX finite element schemes of orders 1-6 for the incompressible Navier-Stokes  system}
{Stability and  error analysis of  IMEX-BDF$k$ finite element schemes for  the incompressible Navier-Stokes  system}
%    Only \author and \address are required; other information is
%    optional.  Remove any unused author tags.

%    author one information
% \author[short version for running head]{name for top of paper}
\author{Qianqian Ding}
\address{School of Mathematics and Statistics, Beijing Jiaotong University, Beijing 100044, China}
\curraddr{}
\email{dingqianqian@lsec.cc.ac.cn}
\thanks{}

%    author two information
\author{Yifan Luo}
\address{State Key Laboratory of Mathematical Sciences, Academy of Mathematics and Systems Science, Chinese Academy of Sciences, Beijing 100190, China\\
 School of Mathematical Sciences, University of Chinese Academy of Sciences, Beijing 100049, China}
\curraddr{}
\email{luoyifan24@mails.ucas.ac.cn}
\thanks{}

% author three information
\author{Shipeng Mao}
\address{%Corresponding author.
	 State Key Laboratory of Mathematical Sciences, Academy of Mathematics and Systems Science, Chinese Academy of Sciences, Beijing 100190, China\\
 School of Mathematical Sciences, University of Chinese Academy of Sciences, Beijing 100049, China}
\curraddr{}
\email{maosp@lsec.cc.ac.cn}
\thanks{S. Mao was supported by the National Key Research and Development Program of China (2024YFA1012502),
and the National Natural Science Foundation of China (No. 12271514).}

%    \subjclass is required.
\subjclass[2020]{Primary 65M60, 65M15, 65M12, 76D05.}

% \keywords is required.
\keywords{Navier-Stokes equations,  IMEX‑BDF$k$, Finite element method, Optimal error analysis, CFL-free stability.}

\date{}

\dedicatory{}

%    Abstract is required.
%\begin{abstract}
%In this paper, we propose and analyze a class of high-order numerical schemes within a fully discrete finite element framework for the incompressible Navier-Stokes equations with no-slip boundary conditions.  The temporal discretization employs a $k$th-order ($k=1,\cdots,6$) implicit-explicit backward difference  formula (IMEX‑BDF$k$), in which the nonlinear convection term is treated explicitly and the linear Stokes part implicitly, whereas the spatial discretization utilizes  Taylor-Hood finite elements.  We establish the stability and uniform boundedness of the numerical solution.  We further establish optimal order error estimates in both space and time without any CFL-type condition, in the sense that the time step is independent of the spatial mesh size. In three dimensions, these include  $L^2$- and $H^1$-norm error estimates for the velocity and  $L^2$-norm error estimates for the pressure, with temporal convergence rates up to sixth order for all variables. Numerical experiments are presented to demonstrate the effectiveness of the scheme and to confirm the theoretical convergence rates.
%\end{abstract}

\begin{abstract}
In this paper, we propose and analyze a class  of high-order numerical schemes within a fully discrete finite element framework for the incompressible Navier-Stokes equations with no-slip boundary conditions.  The temporal discretization employs a $k$th-order ($k=1,\cdots,6$) implicit-explicit backward difference  formula (IMEX‑BDF$k$), in which the nonlinear convection term is treated explicitly and the linear Stokes part implicitly, whereas the spatial discretization utilizes  Taylor-Hood finite elements.  Under suitable regularity assumptions, we establish stability and optimal  error estimates  in both space and time without any CFL-type condition, in the sense that the time step is independent of the spatial mesh size. In three dimensions, these include  $L^2$- and $H^1$-norm error estimates for the velocity and  $L^2$-norm error estimates for the pressure, with temporal convergence rates up to sixth order for all variables. The analysis of the sixth-order scheme combines a tailored BDF6 multiplier argument with a coercivity estimate for the discrete time derivative obtained after summation over the time levels.  Numerical experiments are presented to demonstrate the effectiveness of the schemes and to confirm the theoretical convergence rates.
\end{abstract}

\maketitle

\thispagestyle{empty}   % 只清空当前页的页眉页脚

\section{Introduction}

This work develops and analyzes a class of high-order fully discrete finite element methods for the incompressible Navier-Stokes equations based on $k$th-order backward differentiation formula (BDF$k$), with temporal orders $k=1,\cdots,6$. The governing equations are posed on a three-dimensional domain $\Omega\subset\mathbb{R}^3$ and  read
\begin{align}
&\frac{\partial\bm{u}}{\partial t}-\nu\Delta\bm{u}+(\bm{u}\cdot\nabla)\bm{u}+\nabla p=\bm{f}       \,\,\, \,\,\,\mbox{in  } \Omega\times (0,T],\label{eq:mhd1}\\[2mm]
&\mathrm{div}\, \bm{ u }=0  \qquad\qquad\qquad   \qquad \qquad\,\,\,   \mbox{in  } \Omega\times (0,T],\label{eq:mhd2}
\end{align}
where $T>0$ is a given finite final time, $\bm{u}$ denotes the velocity field, $p$ the pressure, $\nu$ the viscosity of the fluid, and $\bm{f}$ the external force. The system is supplemented with the initial condition and no‑slip boundary condition
 \begin{equation}
 \bm{u}(\bm{x},0)=\bm{u}^0 \quad \forall\, \bm{x}\in\Omega, \qquad\quad 
 \bm{u}=\bm{0} \quad \mbox{on  } \partial\Omega\times(0,T]. \label{eq:bdy2}
\end{equation}

Considerable effort has been devoted to the development of efficient and reliable numerical methods for the Navier-Stokes equations, with particular emphasis on temporal discretization strategies. These methods can be naturally classified according to their order of accuracy. First‑order methods, such as the semi-implicit Euler scheme, are widely used due to their simplicity, robustness, and unconditional stability \cite{Allendes2021sisc, HanYongbin2022ima, LiBuyang2022sinum, AnRong2022cnsns}. Implicit-explicit (IMEX)  first-order schemes have also attracted attention \cite{HanYongbin2023mc, HeYinnian2008mc, LiXiaoli2023anm}, as they treat the nonlinear terms explicitly while retaining an implicit treatment of the stiff linear components, thereby avoiding the solution of nonlinear systems at each time step and reducing computational costs.  Second‑order methods, including the Crank-Nicolson scheme and the BDF2 method, provide an improved balance between accuracy and stability and are among the most commonly employed approaches in finite element frameworks \cite{Bermejo2012sinum, Diegel2017, Archilla2023ima, Hesinum2003,  Ingramint2013, JiBingquan2025ima, Obbadi2025cmame, WangXiaomingnm2012}. However, even these first‑ and second‑order schemes may be insufficient for capturing complex transient phenomena, especially in problems involving multi‑scale dynamics such as turbulence and interfacial instabilities.

Motivated by these limitations, the development of high‑order temporal discretization techniques, such as third‑, fourth‑, and higher‑order BDF and linear multistep methods, has become an important research direction. Compared with low‑order schemes, high‑order methods can significantly improve temporal accuracy under the same spatial discretization, leading to faster convergence toward the exact solution. This advantage is particularly pronounced in problems that involve fine‑scale physical features, including small‑scale turbulence, interfacial instabilities, and vortex dynamics. When a prescribed accuracy is targeted, high‑order schemes permit larger time steps or coarser spatial meshes, thereby reducing the overall computational cost and substantially enhancing efficiency, a crucial benefit in large‑scale three‑dimensional simulations \cite{Hairer1993ode1, Hairer1996ode2}.

In recent years, several key advances have been  achieved in the analysis of high-order temporal discretization methods  for the Navier-Stokes equations. In \cite{ChengKelong2016}, uniform a priori bounds were established for semi-discrete high-order IMEX schemes of up to fourth-order in two-dimensional (2D) periodic domains. Later, \cite{HuangFukeng2021SIAM} proposed a unified analytical framework for IMEX‑BDF$k$ schemes coupled with Fourier-Galerkin spectral methods under periodic boundary conditions, proving $k$th-order convergence for $1 \leq k \leq 5$. This framework was further extended in \cite{JiBingquan2024jsc} through the introduction of discrete orthogonal convolution kernels, yielding optimal $L^2$-error estimates for the velocity under  condition $\tau \leq c N^{-2/k}$.  It should be noted that all these works rely on periodic boundary conditions or spectral methods, which limits their applicability to realistic problems involving complex geometries and essential physical constraints.

  However, many practical flow phenomena arise in domains with curved boundaries, corners, or complex geometries, where spectral or Fourier methods are not readily applicable.  Finite element methods, by contrast, offer great flexibility in handling such irregular domains and naturally accommodate a variety of boundary conditions, including the no‑slip condition.   Although the finite element method (FEM) is widely acknowledged as a highly effective tool for  simulating incompressible flows in complex geometries, rigorous numerical analysis of high‑order temporal discretization schemes within the FEM framework remains a challenging issue.   We begin with a brief review of representative results for the Stokes problem and related linear models. For transient Stokes equations,  \cite{Ahmed2017cmame} analyzed a discontinuous Galerkin time discretization combined with continuous finite elements in space, deriving error estimates by exploiting the commutativity between temporal and spatial interpolation operators. For Stokes equations posed on time-dependent domains,  \cite{LiuJie2013sinum} developed an arbitrary Lagrangian-Eulerian formulation employing BDF$k$ time discretizations ($1\le k\le5$) together with Taylor-Hood finite elements.  A major recent development concerns the sixth-order BDF method, which lies outside the classical multiplier framework of Nevanlinna and Odeh \cite{Nevanlinna1981}. Since BDF methods of order greater than six are no longer zero-stable (see, e.g., Chapter III of \cite{Hairer1993ode1}), BDF6 is the highest-order member of the classical BDF family suitable for practical computations. In particular, \cite{Akrivis2021sinum} established a stability theory for BDF6 applied to linear parabolic problems. This breakthrough was subsequently extended to the Stokes equations in \cite{Alessandro2025sinum}, where optimal convergence was proved for fully discrete finite element schemes of order up to six.

    For the nonlinear incompressible Navier-Stokes equations, error  analysis of high-order BDF finite element methods remains  relatively limited. To the best of our knowledge, the first rigorous error analysis of a third-order BDF finite element scheme was carried out by Baker et al. \cite{Bakermathcomp1982}, who established optimal $L^2$-error estimates for the velocity under the restriction $\tau\le ch^{2/3}$. More recently, \cite{Bosco2025arxiv} presented the first rigorous finite element error analysis for fourth- and fifth-order BDF discretizations of the incompressible Navier-Stokes equations. Their analysis established optimal error estimates for fully discrete BDF$k$ schemes ($k\le5$) under a CFL-type condition of the form $\tau\le Ch^{\alpha}$, with the nonlinear convection term treated fully implicitly.  In a related study, \cite{Qiuhe2026ima} proposed semi-implicit BDF finite element schemes of orders up to five for the unsteady Navier-Stokes-Darcy system and derived optimal $L^2$-error estimates within the fully discrete framework.  The convergence proof was established under a CFL-type restriction. Thus, despite the recent advances  for both linear problems and high-order Navier-Stokes discretizations, existing rigorous analyses of high-order BDF finite element methods for the incompressible Navier-Stokes equations are restricted to orders at most five and require CFL-type conditions. A rigorous stability and optimal error analysis for sixth-order finite element discretizations of the incompressible Navier-Stokes equations  remains unavailable.

  Motivated by the above discussion, we develop and analyze a class of fully discrete finite element approximations of the incompressible Navier-Stokes equations based on IMEX-BDF$k$ time discretizations of orders  $k=1,\cdots,6$. The nonlinear convection term is extrapolated explicitly, whereas the viscous and pressure terms are treated implicitly. This IMEX treatment of the nonlinearity yields, at each time step, a linear system with a constant coefficient matrix. For $k=1,\cdots,5$, our analysis combines the Nevanlinna-Odeh multiplier
framework \cite{Nevanlinna1981} with mixed finite element approximation and nonlinear stability estimates, yielding a unified proof of stability and optimal-order convergence. The sixth-order case requires a separate and substantially more delicate argument. Compared with the linear theories in \cite{Akrivis2021sinum,Alessandro2025sinum}, the principal difficulty is the interaction between the BDF6 multiplier and the explicitly extrapolated convection term.  In the optimal $H^1$-error analysis for BDF6, the discrete BDF6 derivative is retained as the Galerkin test function, and a suitable linear combination of four consecutive error equations is formed. The viscous contribution then produces a telescoping $G$-energy. By contrast, the corresponding discrete derivative contribution is not coercive at an individual time level. Its coercivity is recovered only after summation in time through a finite-interval Toeplitz estimate derived from the positivity of the associated multiplier symbol. 
The temporal consistency errors are decomposed into the BDF truncation error of the exact velocity and a discrete difference of the mixed Stokes projection error. %The first part is bounded by the temporal regularity of the velocity, while the second is controlled by standard approximation properties of the mixed Stokes projection applied to time derivatives of velocity and pressure. 
This decomposition avoids imposing unnecessarily strong temporal regularity assumptions on the pressure.  To handle the explicitly extrapolated convection term without loss of temporal accuracy, we further rewrite the  multistep velocity combination  as  a weighted sum of sixth-order temporal derivatives and derive  an identity  linking the current time derivative to the solution values at the current and six previous time levels. Together, these ingredients yield the full sixth-order temporal convergence rate. 
% In deriving the optimal $H^1$-error estimate, we retain the discrete BDF6 derivative as the Galerkin test function and combine four consecutive error equations. The viscous contribution then yields a telescoping $G$-energy, while coercivity of the discrete derivative contribution is recovered after summation in time by a finite-interval Toeplitz estimate. We further rewrite the  multistep velocity combination appearing in the extrapolated convection term as  a weighted sum of sixth-order temporal derivatives and derive  an identity  linking the current time derivative to the solution values at the current and six previous time levels.  Together, these ingredients preserve the full sixth-order temporal accuracy.  
 Combining the unified analysis for $k=1,\cdots,5$ with the dedicated sixth-order analysis, we obtain energy stability and optimal error estimates for all IMEX-BDF$k$ schemes with $k=1,\cdots,6$. More precisely, optimal convergence rates are obtained for the velocity in the $L^{\infty}(0,T;\bm{L}^2)$ and $L^{\infty}(0,T;\bm{H}^1)$ norms and for the pressure in the $L^{2}(0,T;L^2)$ norm, without any CFL-type condition in the sense
that the time step is independent of the spatial mesh size. The error bounds are optimal in both space and time, yielding full $k$th-order temporal convergence for every order considered.   Numerical experiments are presented to confirm the theoretical findings and to demonstrate the efficiency and accuracy of the proposed methods.

  %The remainder of this paper is organized as follows. Section \ref{section:setting} introduces  the necessary preliminaries, including notation, Sobolev spaces, the variational formulation and the discrete method for Navier-Stokes equations, and then states the main  results on unconditional stability and optimal error estimates.   Section \ref{section analysis} provides a detailed proof of the stability of the numerical solutions, followed by a rigorous derivation of the optimal error estimates for the velocity in the $L^2$- and $H^1$-norms and for the pressure in the $L^2$-norm.  Numerical experiments are given in Section \ref{section:numerical} to support the theoretical  findings. Finally,   concluding remarks are summarized  in Section \ref{section:summary}.

%The rest of this paper is organized as follows. Section \ref{section:setting} introduces some preliminary knowledge,  including symbols, Sobolev spaces,  and the weak formulation of Navier-Stokes equations. Section \ref{section:fem} develops a fully discrete numerical scheme that combines BDF-$k$ temporal discretization with finite element spatial approximation, and derives the corresponding error equations for the numerical solution for orders $k=1,\cdots,6$. Section \ref{section:error}  rigorously derives optimal error estimates of the  BDF-$k$ numerical scheme under  reasonable regularity assumptions on exact solutions. Numerical experiments are reported  in Section \ref{section:numerical} to support the theoretical  results, and concluding remarks are summarized  in Section \ref{section:summary}.

The remainder of this paper is organized as follows. Section \ref{section:setting} introduces  the necessary preliminaries, including notation, Sobolev spaces, the variational formulation and the discrete method for Navier-Stokes equations, and then states the main  results on stability and optimal error estimates.   Section \ref{section analysis} provides a detailed proof of the stability of the numerical solutions, followed by a rigorous derivation of the optimal error estimates for the velocity in the $L^2$- and $H^1$-norms and for the pressure in the $L^2$-norm.  Numerical experiments are given in Section \ref{section:numerical} to support the theoretical  findings. Finally,   concluding remarks are summarized  in Section \ref{section:summary}.

\section{Preliminaries}
\label{section:setting}
%\subsection{Preliminaries and notation}
\subsection{Notation and weak formulation}
We begin by introducing basic notation that will be used throughout the paper.  For any integer $m\in \N^+$, $1\leq p\leq\infty$, let $W^{m,p}(\Omega)$ denote the standard Sobolev space, which is denoted by $H^{m}(\Omega)$ when $p=2$. The associated norm in $W^{m,p}(\Omega)$ is denoted by $\Vert\cdot\Vert_{m,p}$. % such that 
%\begin{align*}
%\Vert v\Vert_{m,p}=&\,\Big(\sum_{\vert\alpha\vert\leq m}\Vert D^{\alpha}v\Vert_{0,p}^p\Big)^{1/p}\qquad\mathrm{for}\,\, 1\leq p<\infty,\\
%\Vert v\Vert_{m,\infty}=&\,\max_{\vert\alpha\vert\leq m}\Vert D^{\alpha}v\Vert_{0,\infty},
%\end{align*}
%where
%\begin{align*}
%\Vert v\Vert_{0,p}=&\,\Big(\int_{\Omega}\vert v\vert^p\,dx\Big)^{1/p}\quad \mathrm{for}\,\,1\leq p<+\infty,\quad \Vert v\Vert_{0,\infty}=&\,\operatorname*{ess\,sup}_{x\in \Omega}\vert v(x)\vert,
%\end{align*}
%and 
%\begin{align*}
%D^{\alpha}=\frac{\partial^{\vert\alpha\vert}}{\partial x_1^{\alpha_1}\partial x_2^{\alpha_2}\partial x_3^{\alpha_3}},
%\end{align*}
%for the multi-index $\alpha=(\alpha_1, \alpha_2, \alpha_3)$ and $\vert\alpha\vert=\alpha_1+\alpha_2+\alpha_3$, with $\alpha_1, \alpha_2, \alpha_3\ge0$. For the function spaces $L^p(0,T;X)$, $1\leq p\leq\infty$, the norms are denoted as
%\begin{align*}
%\Vert v\Vert_{L^p(0,T;X)}=&\,\Big(\int_0^T \Vert v(t)\Vert_X^p\,dt\Big)^{1/p}\qquad \mathrm{for}\,\,1\leq p<+\infty,\\
%\Vert v\Vert_{L^{\infty}(0,T;X)}=&\,\operatorname*{ess\,sup}_{0\leq t\leq T}\Vert v(t)\Vert_X,
%\end{align*}
%where $X$ is a real Banach space with the norm $\Vert\cdot\Vert_X$. 
The $L^2$ inner product is defined as  $(\phi, \psi)=\int_{\Omega}\phi\psi \,dx$, and the corresponding norm is denoted by $\Vert\cdot\Vert_{0}$. Vector-valued functions appear in boldface notation, such as $\bm{u}=(u_1, u_2, u_3)^{\top}$ and $\bm{L}^2(\Omega)=(L^2(\Omega))^3$.  The superscript  $\top$ denotes transposition. We use $C$ and $c$, with or without subscripts, to denote generic positive constants independent of the discretization parameters, which may take different values at different places. %We use $C$ and $c$, with or without subscripts, to denote generic positive constants independent of the discretization parameters, which may take different values at different places.

 For the problem  described by (\ref{eq:mhd1})-(\ref{eq:bdy2}), we introduce the following Sobolev spaces
\begin{align*}
 &\bm{X} = \bm{H}_0^1(\Omega),  \,\,\, \bm{X}_0 =\left \{\bm{v}\in \bm{X}, \mathrm{div}\,\bm{v}=0\right\},\,\,\, Q=\left\{q\in L^2(\Omega), \int_{\Omega}q(x)\,dx=0\right\}. 
%& \bm{Y}= \bm{L}^2(\Omega),\,\,\, \bm{H}=\{\bm{v}\in \bm{Y},\,\,\, \mathrm{div}\,\bm{v}=0, \quad \bm{v}\cdot\bm{n}\vert_{\partial\Omega}=0\}.
\end{align*}
%We denote the Stokes operator by $\mathcal{A}=-P_{0}\Delta$, see  \cite{Heywood1982}, where $P_{0}$ is the $L^2$-orthogonal projection of $\bm{Y}$ onto $\bm{H}$.
%Here and in what follows, we define the norm 
%\begin{equation*}
%\Vert\bm{C}\Vert_{\bm{W}}=\Vert\bm{C}\Vert_{\bm{H}(\mathbf{curl};\Omega)}=\Big(\Vert\bm{C}\Vert_0^2+\Vert\mathbf{curl}\,\bm{C}\Vert_0^2\Big)^{1/2}\qquad \forall \,\bm{C}\in\bm{W}.
%\end{equation*}

For the sake of convenience, we define the following notation
\begin{equation*}
\begin{split}
a(\bm{w}, \bm{u}, \bm{v})=((\bm{w}\cdot\nabla)\bm{u}, \bm{v}), \quad d(\bm{v}, p)=(\mathrm{div}\,\bm{v}, p).
\end{split}
\end{equation*}

The following scheme presents the weak formulation of (\ref{eq:mhd1})-(\ref{eq:bdy2}): find the solution $(\bm{u}, p)\in L^2(0,T;\bm{X})\cap L^{\infty}(0,T;\bm{L}^2(\Omega))\times L^{\infty}(0,T;Q)$ such that, for all $(\bm{v}, q)\in (\bm{X}\times Q)$
\begin{align}
&\left(\frac{\partial\bm{u}}{\partial t}, \bm{v}\right)+\nu(\nabla\bm{u}, \nabla\bm{v})+a(\bm{u}, \bm{u}, \bm{v})-d(\bm{v}, p)=(\bm{f}, \bm{v}) ,\label{weak:eq1}\\[2mm]
&d(\bm{u}, q)=0. \label{weak:eq2}
\end{align}

%Throughout this paper,  we impose the following assumptions on the problem  (\ref{weak:eq1})-(\ref{weak:eq2}), which  serve as the foundation for the subsequent analysis.
%\begin{hypothesis}\label{assum:right1}
%The initial data $\bm{u}^0$, with $l\ge1$, and the force $\bm{f}$ satisfy the following estimate
%\begin{equation*}
%\begin{split}
%&\sup\limits_{0\leq t\leq T}\{\Vert\bm{f}(t)\Vert_0\}\leq C_f, \qquad \Vert\bm{u}^0\Vert_{1+l,2}\leq M_0.
%\end{split}
%\end{equation*}
%\end{hypothesis}

We also frequently employ the following inequalities \cite{Temam1983}
 \begin{equation}\label{bestimate}
|a(\bm{w}, \bm{u}, \bm{v})|\leq
\begin{cases}
\begin{aligned}
&C\|\bm{w}\|_{1,2}\|\bm{u}\|_{1,2}\|\bm{v}\|_{1,2}.\\
& C\|\bm{w}\|_{2,2}\|\bm{u}\|_{0}\|\bm{v}\|_{1,2}.\\
&C\|\bm{w}\|_{2,2}\|\bm{u}\|_{1,2}\|\bm{v}\|_{0}.\\
&C\|\bm{w}\|_{1,2}\|\bm{u}\|_{2,2}\|\bm{v}\|_{0}.\\
&C\|\bm{w}\|_{0}\|\bm{u}\|_{2,2}\|\bm{v}\|_{1,2}.
\end{aligned}
\end{cases}
\end{equation}

\subsection{Fully discrete schemes}
The domain $\Omega$ is assumed to be convex polyhedral and partitioned into a mesh $\mathcal{T}_h$, which consists of tetrahedral elements $K$. The family of meshes $\{\mathcal{T}_h\}$ is assumed to be shape-regular and quasi-uniform. %Each tetrahedron K is supposed to be the image of a reference $\hat K$, the affine map is $F_K$.  Let $P_k(K)$ be the space of polynomials of total degree at most $k\ge0$ on $K$. 
To approximate the velocity-pressure pair $(\bm{u}, p)$, we employ generalized Taylor-Hood elements $(\bm{X}_h^{l}, Q_h^{l-1})$ with $l \ge 2$. Here,  $\bm{X}_h^{l}$ represents the $l$th-order vectorial Lagrange finite element subspace of $\bm{X}$, while $Q_h^{l-1}$ is the $(l-1)$th-order scalar Lagrange finite element subspace of $Q$, see \cite{Girault1986} for more details.  %In the lowest-order stable approximation, the MINI element pair is adopted for the velocity-pressure approximation. 
For the lowest-order approximation, we use the MINI element, which pairs continuous piecewise linear pressures with continuous piecewise linear velocities enriched by element bubble functions.

%The lowest-order case ($l=1$) is handled by the MINI element. %For the case $l=1$,   velocity-pressure is s approximated by the MINI elements. %When the case $k=0$, $(\bm{u}, p)$ is approximated by the mini-element, which still suitable for our theoretical analysis and numerical results in this paper, cf., for instance, \cite{MR1115205, MR2059447}.

Furthermore, the discrete kernel space of the divergence operator can be defined by
\begin{equation*}
\bm{X}_{0h}^{l}=\left\{\bm{v}_h\in \bm{X}_h^{l},\, d(\bm{v}_h, q_h)=0 \quad \forall\,q_h\in Q_h^{l-1}\right\}.
\end{equation*}
%it can be observed that $\bm{W}_{0h}^k\not\subset\bm{W}_h^k$.
By the Fortin criterion, the discrete inf-sup condition (see, e.g. Chapter 2 of \cite{Brezzi1991} or \cite{Hiptmair2002}) holds
\begin{equation}\label{infsup}
 \inf\limits_{0\ne q_h\in Q_h^{l-1}}\sup\limits_{\bm{0}\ne\bm{v}_h\in \bm{X}_h^{l}}\frac{(q_h, \mathrm{div}\,\bm{v}_h)}{\Vert\bm{v}_h\Vert_{1,2}\Vert q_h\Vert_0}\ge{\chi}^*,
\end{equation}
where $\chi^*$ denotes a generic positive constant dependent on the domain  $\Omega$.

%Recall the inverse estimate from Theorem 3.2.6 in \cite{Ciarlet1978}: on a quasi-uniform mesh $\mathcal{T}_h$,  the following inequality holds:
%\begin{eqnarray}
%\Vert\bm{v}_h\Vert_{m,q}\leq C_{inv}h^{\ell-m+3(1/q-1/p)}\Vert\bm{v}_h\Vert_{\ell,p}\qquad \forall \,\bm{v}_h\in\bm{X}_h^{l},\label{inverse} 
%\end{eqnarray}
%where $C_{inv}>0$ denotes a generic constant independent of the mesh-size $h$, $\ell$ and $m$ are two real numbers with $0\leq\ell\leq m\leq1$, $p$ and $q$ are two integers with $1 \leq p\leq q\leq\infty$. 

To describe the BDF$k$ time discretization, let $N$ be a positive integer and $0=t^0<t^1<\cdots<t^N=T$ a uniform partition of $[0, T]$ with time step size  $\tau =t^i-t^{i-1}, i=1,2,\cdots,N$, and denote by $\bm{u}^n=\bm{u}(t^n)$  the exact solution at time $t^n$. We consider the following fully discrete finite element approximation of (\ref{weak:eq1})-(\ref{weak:eq2}): given $\bm{u}_h^0, \bm{u}_h^1,\cdots, \bm{u}_h^{k-1}$, find  $(\bm{u}_h^{n+1}, p_h^{n+1})\in(\bm{X}_h^{l}\times Q_h^{l-1})$ such that for all $(\bm{v}_h, q_h)\in(\bm{X}_h^{l}\times Q_h^{l-1})$
\begin{align}
&\left(\frac{\alpha_k{\bm{u}}_h^{n+1}-\beta_k({\bm{u}}_h^{n})}{\tau }, \bm{v}_h\right)+\nu(\nabla\bm{u}_h^{n+1},\nabla \bm{v}_h)+a(\gamma_k(\bm{u}_h^n), \gamma_k(\bm{u}_h^n), \bm{v}_h)\nonumber\\[2mm]
&-d(\bm{v}_h, p_h^{n+1})=(\bm{f}^{n+1}, \bm{v}_h),  \label{weak space discrete:eq1}\\[2mm]
&d(\bm{u}_h^{n+1}, q_h)=0.  \label{weak space discrete:eq2}
\end{align}
%The initial values are defined by $\bm{u}_h^{\ell}=L_{h}\bm{u}^{\ell}$, with $\ell=0,1,\cdots, k-1$, where $L_{h}:\bm{L}^2(\Omega)\to\bm{X}_{0h}^{l}$ denotes  the $\bm{L}^2$-orthogonal projection. The following standard estimate holds  \cite{AitOuAmmi1994, Heywood1982},
%\begin{equation}\label{initial value estimate}
%\begin{split}
%&\Vert\bm{u}^{\ell}-\bm{u}_h^{\ell}\Vert_0\leq Ch^{l+1}\Vert\bm{u}^{\ell}\Vert_{l+1,2}.
%\end{split}
%\end{equation}
%where $s>1/2$ depends on the regularity of the domain $\Omega$.
Since the extrapolated velocity $\gamma_k(\bm{u}_h^n)$ is known, each time step requires only the solution of a linear Stokes-type problem. In the above formulation,   the coefficients $\alpha_k$ and the operators $\beta_k$  and $\gamma_k$ for $k=1,\cdots,6$, see  \cite{Hairer1993ode1, Hairer1996ode2}, are given by: 
first-order,
\begin{align}
&\alpha_1=1, \qquad \beta_1({\bm{u}}_h^n)={\bm{u}}_h^n, \qquad \gamma_1(\bm{u}_h^n)=\bm{u}_h^n,\label{k1}
\end{align}
second-order,
\begin{align}
&\alpha_2=\frac{3}{2}, \quad \beta_2({\bm{u}}_h^n)=2{\bm{u}}_h^n-\frac{1}{2}{\bm{u}}_h^{n-1}, \quad \gamma_2(\bm{u}_h^n)=2\bm{u}_h^n-\bm{u}_h^{n-1},\label{k2}
\end{align}
third-order,
\begin{align}
\alpha_3=\frac{11}{6}, \quad& \beta_3({\bm{u}}_h^n)=3{\bm{u}}_h^n-\frac{3}{2}{\bm{u}}_h^{n-1}+\frac{1}{3}{\bm{u}}_h^{n-2}, \nonumber\\[2mm]
&\gamma_3(\bm{u}_h^n)=3\bm{u}_h^n-3\bm{u}_h^{n-1}+\bm{u}_h^{n-2},\label{k3}
\end{align}
fourth-order,
\begin{align}
\alpha_4=\frac{25}{12}, \quad  &\beta_4({\bm{u}}_h^n)=4{\bm{u}}_h^n-3{\bm{u}}_h^{n-1}+\frac{4}{3}{\bm{u}}_h^{n-2}-\frac{1}{4}{\bm{u}}_h^{n-3}, \nonumber\\[2mm]
& \gamma_4(\bm{u}_h^n)=4\bm{u}_h^n-6\bm{u}_h^{n-1}+4\bm{u}_h^{n-2}-\bm{u}_h^{n-3},\label{k4}
\end{align}
fifth-order,
\begin{align}
\alpha_5=\frac{137}{60}, \quad&  \beta_5({\bm{u}}_h^n)=5{\bm{u}}_h^n-5{\bm{u}}_h^{n-1}+\frac{10}{3}{\bm{u}}_h^{n-2}-\frac{5}{4}{\bm{u}}_h^{n-3}+\frac{1}{5}{\bm{u}}_h^{n-4},\nonumber \\[2mm]
&\gamma_5(\bm{u}_h^n)=5\bm{u}_h^n-10\bm{u}_h^{n-1}+10\bm{u}_h^{n-2}-5\bm{u}_h^{n-3}+\bm{u}_h^{n-4}, \label{k5}
\end{align}
sixth-order,
\begin{align}
%\alpha_6=\frac{882}{120}=\frac{441}{60},  \quad& \beta_6({\bm{u}}_h^n)=6{\bm{u}}_h^n-\frac{15}{2}{\bm{u}}_h^{n-1}+\frac{19}{3}{\bm{u}}_h^{n-2}-\frac{13}{4}{\bm{u}}_h^{n-3}+\frac{6}{5}{\bm{u}}_h^{n-4}-\frac{1}{6}{\bm{u}}_h^{n-5},\nonumber  \\
%& \gamma_6(\bm{u}_h^n)=6\bm{u}_h^n-15\bm{u}_h^{n-1}+19\bm{u}_h^{n-2}-13\bm{u}_h^{n-3}+6\bm{u}_h^{n-4}-\bm{u}_h^{n-5}.\label{k6}
\alpha_6=\frac{147}{60},  \quad& \beta_6({\bm{u}}_h^n)=6{\bm{u}}_h^n-\frac{15}{2}{\bm{u}}_h^{n-1}+\frac{20}{3}{\bm{u}}_h^{n-2}-\frac{15}{4}{\bm{u}}_h^{n-3}+\frac{6}{5}{\bm{u}}_h^{n-4}-\frac{1}{6}{\bm{u}}_h^{n-5},\nonumber  \\
& \gamma_6(\bm{u}_h^n)=6\bm{u}_h^n-15\bm{u}_h^{n-1}+20\bm{u}_h^{n-2}-15\bm{u}_h^{n-3}+6\bm{u}_h^{n-4}-\bm{u}_h^{n-5}.\label{k6}
\end{align}

\begin{remark}
We emphasize that the present analysis differs substantially from existing results on BDF-based finite element discretizations for the Navier-Stokes equations, which are typically limited to orders $k=3,4, 5$ and require CFL-type step-size restrictions of the form $\tau\leq Ch^{\alpha}$. The unified framework developed in this work establishes stability and convergence for all orders $k=1,\cdots,6$, thereby removing any coupling condition between the time-step size and the mesh size. In particular, the stability and optimal-order error estimates for the sixth-order IMEX-BDF scheme appear to be new.  A further feature of the analysis is that the nonlinear convection term is treated in an IMEX manner, so that only linear problems have to be solved at each time step. The scheme therefore retains the computational efficiency of linearized time stepping while still admitting the energy estimate without any CFL-type restriction. %The theory is developed in a general finite element setting and applies to a broad class of inf-sup stable velocity-pressure pairs. Besides standard conforming pairs such as Taylor-Hood and MINI elements, the framework also covers $H(\mathrm{div})$-conforming discretizations, including Raviart-Thomas and Brezzi-Douglas-Marini elements. Thus, the proposed analysis provides a flexible and broadly applicable framework for high-order finite element approximations of incompressible flow problems.
\end{remark}

\subsection{The auxiliary estimates and main results}
%To establish the boundedness of the numerical solution in the energy norm  utilizing  BDF$k$ scheme within a unified framework, we define herein  the discrete Stokes operator $\mathcal{A}_{h}$, cf \cite{HeYinnian2007sinum}. %and $-\Delta_{h}$ is defined  in such a way that
%\begin{equation*}
% (-\Delta_{h}\bm{u}_h, \bm{v}_h)=(\nabla\bm{u}_h, \nabla\bm{v}_h), \,\,\forall \, \bm{u}_h,\, \bm{v}_h\in \bm{X}_h^{l}.
%\end{equation*}
%The corresponding discrete norm is defined as $\|\bm{v}_h\|_{2,h}=\|\mathcal{A}_h\bm{v}_h\|_0$. 
%The aforementioned result plays a key role  in establishing the stability of high-order BDF schemes for nonlinear parabolic equations \cite{Akrivis2015stability}. %, and it assumes a pivotal role in our error analysis.
%The above result played a key role in proving the stability of high-order BDF schemes for nonlinear parabolic equations \cite{Akrivis2015stability}, and it plays an important role in our error analysis.
The discrete Stokes operator will be used to control the nonlinear term and to formulate the stability estimate. Let $\bm{V}_h:=\bm{X}_{0h}^l$ be the discretely divergence-free velocity space. The discrete Stokes operator
$\mathcal{A}_h: \bm{V}_h\rightarrow \bm{V}_h$ is defined by
\begin{equation}
  (\mathcal{A}_h \bm{v}_h, \bm{w}_h)=(\nabla \bm{v}_h,\nabla \bm{w}_h)
  \qquad\forall\,\bm{w}_h\in \bm{V}_h.
\end{equation}
The operator $\mathcal{A}_h$ is self-adjoint and positive definite on $\bm{V}_h$. We accordingly introduce the discrete Stokes norm
\begin{align*}
  \|\bm{v}_h\|_{2,h}:=\|\mathcal{A}_h\bm{v}_h\|_0,
  \qquad \bm{v}_h\in \bm{V}_h.
\end{align*}
The following regularity property of the stationary Stokes problem is used in the discrete Sobolev estimates.
\begin{hypothesis}[Stokes regularity]
For every $\bm{g}\in\bm{L}^2(\Omega)$, the stationary Stokes problem
\[
\begin{aligned}
-\Delta\bm{z}+\nabla r&=\bm{g}
&&\text{in }\Omega,\\
\nabla\cdot\bm{z}&=0
&&\text{in }\Omega,\\
\bm{z}&=\bm{0}
&&\text{on }\partial\Omega
\end{aligned}
\]
admits a unique solution
\[
(\bm{z},r)
\in
\bigl(\bm{H}_0^1(\Omega)\cap
      \bm{H}^2(\Omega)\bigr)
\times
\bigl(H^1(\Omega)\cap Q\bigr)
\]
satisfying
\begin{equation}
 \|\bm{z}\|_{2,2}
 +\|r\|_{1,2}
 \le C\|\bm{g}\|_0.
\end{equation}
\end{hypothesis}

We next  state two discrete Sobolev estimates used in the nonlinear stability analyses. The first  inequality is standard,  whereas the second does not seem to appear in the  literature; we provide its proof in Appendix \ref{appen} for completeness.
\begin{lemma}[Discrete Sobolev estimates]\label{lemmaSobolev}
Assume that $\{\mathcal T_h\}_h$ is a shape-regular and quasi-uniform family of tetrahedral meshes of $\Omega$ and that the Stokes regularity assumption holds. Then, for every $\bm{v}_h\in \bm{V}_h$,  the following discrete Sobolev inequalities hold
\begin{equation}
 \|\bm{v}_h\|_{0,6} \le C\|\nabla \bm{v}_h\|_0,\qquad \|\nabla \bm{v}_h\|_{0,3} \le C\|\nabla \bm{v}_h\|_0^{1/2}    \|\mathcal{A}_h\bm{v}_h\|_0^{1/2}.
\end{equation}
Here,  $C$ denotes a generic positive constant independent of $h$. %Consequently, the nonlinear estimates used in Sections~3--5 hold with constants independent of the spatial mesh size.
\end{lemma}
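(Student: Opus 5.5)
The first inequality does not need the discrete structure. Since $\bm{V}_h\subset\bm{X}=\bm{H}_0^1(\Omega)$, we apply the continuous Sobolev embedding $H^1\hookrightarrow L^6$ in three dimensions together with the Poincaré inequality, which gives $\|\bm{v}_h\|_{0,6}\le C\|\bm{v}_h\|_{1,2}\le C\|\nabla\bm{v}_h\|_0$ with $C$ depending only on $\Omega$.

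For the second inequality we use a comparison with the continuous Stokes problem. Given $\bm{v}_h\in\bm{V}_h$, set $\bm{g}:=\mathcal{A}_h\bm{v}_h\in\bm{V}_h\subset\bm{L}^2$ and let $(\bm{z},r)$ solve the stationary Stokes problem with right-hand side $\bm{g}$. By the Stokes regularity hypothesis, $\|\bm{z}\|_{2,2}+\|r\|_{1,2}\le C\|\mathcal{A}_h\bm{v}_h\|_0$. Now let $(\bm{z}_h,r_h)\in\bm{X}_h^l\times Q_h^{l-1}$ be the mixed finite element (Stokes–Galerkin) approximation of $(\bm{z},r)$. Then $\bm{z}_h\in\bm{V}_h$ and $(\nabla\bm{z}_h,\nabla\bm{w}_h)=(\bm{g},\bm{w}_h)$ for all $\bm{w}_h\in\bm{V}_h$, because the pressure term vanishes on discretely divergence-free test functions. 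Since $\mathcal{A}_h\bm{v}_h=\bm{g}$ restricted to $\bm{V}_h$ characterizes $\bm{v}_h$ uniquely, we get $\bm{z}_h=\bm{v}_h$. By the inf-sup condition \eqref{infsup} and the standard quasi-optimal mixed error estimate (valid also for the MINI element) combined with interpolation estimates,
\[
\|\nabla(\bm{z}-\bm{v}_h)\|_0\le Ch\big(\|\bm{z}\|_{2,2}+\|r\|_{1,2}\big)\le Ch\|\mathcal{A}_h\bm{v}_h\|_0 .
\]
We also record $\|\nabla\bm{z}\|_0\le C\|\nabla\bm{v}_h\|_0$. Indeed, $\|\nabla\bm{z}\|_0^2=(\bm{g},\bm{z})$, and we may write $(\bm{g},\bm{z})=(\bm{g},\bm{z}_h)+(\bm{g},\bm{z}-\bm{z}_h)$, or more simply use the $H^1$-stability of the mixed Galerkin projection in the other direction, $\|\nabla \bm{v}_h\|_0\le C\|\nabla\bm{z}\|_0$. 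The clean bound we actually need is obtained through the split below.

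Next we decompose
\[
\nabla\bm{v}_h=\nabla\bm{z}+\nabla(\bm{v}_h-\bm{z})
\]
and bound each piece in $L^3$, possibly after inserting the Lagrange interpolant $I_h\bm{z}$. For the continuous part, Gagliardo–Nirenberg in 3D gives $\|\nabla\bm{z}\|_{0,3}\le C\|\nabla\bm{z}\|_0^{1/2}\|\bm{z}\|_{2,2}^{1/2}$. For the discrete part $\bm{e}_h:=\bm{v}_h-I_h\bm{z}$, the inverse inequality $\|\nabla\bm{e}_h\|_{0,3}\le Ch^{-1/2}\|\nabla\bm{e}_h\|_0$ applies. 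By the error estimate above and interpolation, $\|\nabla\bm{e}_h\|_0\le Ch\|\mathcal{A}_h\bm{v}_h\|_0$. We also have $\|\nabla\bm{e}_h\|_0\le C(\|\nabla\bm{v}_h\|_0+\|\nabla\bm{z}\|_0)$, and taking the geometric mean of the two bounds gives
\[
h^{-1/2}\|\nabla\bm{e}_h\|_0\le C\big(\|\nabla\bm{v}_h\|_0+\|\nabla\bm{z}\|_0\big)^{1/2}\|\mathcal{A}_h\bm{v}_h\|_0^{1/2}.
\]
The remaining term $\|\nabla(\bm{z}-I_h\bm{z})\|_{0,3}$ is bounded by $Ch^{1/2}\|\bm{z}\|_{2,2}$ through local interpolation estimates and the scaling $W^{2,2}\to W^{1,3}$. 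Using $\|\nabla(I_h\bm{z}-\bm{z})\|_0\le Ch\|\bm{z}\|_{2,2}$ and the triangle inequality, this piece is handled by the same geometric-mean trick.

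The main obstacle is to control $\|\nabla\bm{z}\|_0$ by $\|\nabla\bm{v}_h\|_0$ alone, so that no stray factor of $\|\mathcal{A}_h\bm{v}_h\|_0$ or of $h$ survives. We plan to write
\[
\|\nabla\bm{z}\|_0^2=(\bm{g},\bm{z})=(\mathcal{A}_h\bm{v}_h,\bm{z}).
\]
Let $P_h$ denote the $L^2$-projection onto $\bm{V}_h$. Since $\mathcal{A}_h\bm{v}_h\in\bm{V}_h$, we have $(\mathcal{A}_h\bm{v}_h,\bm{z})=(\mathcal{A}_h\bm{v}_h,P_h\bm{z})=(\nabla\bm{v}_h,\nabla P_h\bm{z})$. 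This reduces the matter to the $H^1$-stability of $P_h$ restricted to divergence-free $H_0^1$ fields, which holds on quasi-uniform meshes and can be shown via the inverse inequality and approximation. It yields $\|\nabla\bm{z}\|_0\le C\|\nabla\bm{v}_h\|_0$. Collecting the three bounds then proves the second inequality with $C$ independent of $h$.
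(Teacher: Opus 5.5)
Your architecture is the paper's: lift $\bm g=\mathcal{A}_h\bm v_h$ to the continuous Stokes solution $(\bm z,r)$, identify $\bm z_h=\bm v_h$, use the $O(h)$ mixed error bound, insert an interpolant, apply the $L^3$--$L^2$ inverse inequality to the discrete remainder and Gagliardo--Nirenberg to $\bm z$. The argument does not close as written, however. You never use the ingredient the paper relies on to finish, namely the inverse estimate for the discrete Stokes operator
\[
h\|\mathcal{A}_h\bm v_h\|_0\le C\|\nabla\bm v_h\|_0 .
\]
It takes one line: $\|\mathcal{A}_h\bm v_h\|_0=\sup_{\bm w_h\in\bm V_h}(\nabla\bm v_h,\nabla\bm w_h)/\|\bm w_h\|_0\le Ch^{-1}\|\nabla\bm v_h\|_0$ by the inverse inequality on the quasi-uniform mesh. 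The concrete failure is your middle piece. You obtain $\|\nabla(\bm z-I_h\bm z)\|_{0,3}\le Ch^{1/2}\|\bm z\|_{2,2}\le Ch^{1/2}\|\mathcal{A}_h\bm v_h\|_0$, and turning $h^{1/2}\|\mathcal{A}_h\bm v_h\|_0$ into $C\|\nabla\bm v_h\|_0^{1/2}\|\mathcal{A}_h\bm v_h\|_0^{1/2}$ is exactly equivalent to that estimate. The ``same geometric-mean trick'' is not available here: it needs a second bound coming from an inverse inequality, and $\bm z-I_h\bm z$ is not a finite element function. Your second bound for $\bm e_h$ has a related defect. The Lagrange interpolant is not $H^1$-stable, so you only get $\|\nabla\bm e_h\|_0\le\|\nabla\bm v_h\|_0+\|\nabla\bm z\|_0+Ch\|\bm z\|_{2,2}$, and the last term again requires the missing estimate (or an $H^1$-stable Scott--Zhang interpolant).

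With the inverse estimate in hand, what you call the main obstacle is just the triangle inequality, and this is exactly how the paper proceeds:
\[
\|\nabla\bm z\|_0\le\|\nabla\bm v_h\|_0+\|\nabla(\bm z-\bm v_h)\|_0\le\|\nabla\bm v_h\|_0+Ch\|\mathcal{A}_h\bm v_h\|_0\le C\|\nabla\bm v_h\|_0 .
\]
Your detour through the $L^2$-projection $P_h$ onto $\bm V_h$ is not wrong in principle, since $H^1$-stability of $P_h$ on exactly divergence-free $\bm H_0^1$ fields does hold on quasi-uniform meshes. But you assert it without proof, and proving it needs an $H^1$-stable operator into $\bm V_h$ with $O(h)$ $L^2$-accuracy (e.g. the Ritz projection onto $\bm V_h$ plus an Aubin--Nitsche argument using Stokes regularity) together with the very same inverse inequality. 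So it is strictly heavier than the one-line estimate above. The paper also avoids your three-term split by using a Scott--Zhang interpolant with $\|\nabla I_h\bm z\|_{0,3}\le C\|\nabla\bm z\|_{0,3}$. Then only two pieces remain: $\nabla(\bm v_h-I_h\bm z)$, where the inverse inequality gives $Ch^{1/2}\|\mathcal{A}_h\bm v_h\|_0$, and $\nabla I_h\bm z$, handled by Gagliardo--Nirenberg. Both are brought to the target form via $h\|\mathcal{A}_h\bm v_h\|_0\le C\|\nabla\bm v_h\|_0$. Your route can also be completed with Scott--Zhang and a proof of the $P_h$-stability, but it gains nothing over this.
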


%\subsection{Assumptions and main results}

The $k$-step scheme requires the starting velocities $\bm{u}_h^j\in \bm{X}_{0h}^l$, $j=0,\cdots,k-1$. The starting values are assumed to satisfy the following conditions  \cite{AitOuAmmi1994, Heywood1982}.
\begin{hypothesis}\label{initial value estimate}
Let $1\le k\le6$. The starting values $\bm{u}_h^j$,  $j=0,\cdots,k-1$, satisfy
\begin{align*}
& \max_{0\le j\le k-1}\|\bm{u}(t^j)-\bm{u}_h^j\|_0\le C\bigl(h^{l+1}+\tau^k\bigr), \qquad  \max_{0\le j\le k-1} \|\nabla(\bm{u}(t^j)-\bm{u}_h^j)\|_0\le C\bigl(h^l+\tau^k\bigr). 
\end{align*}
and
\begin{equation}
 \max_{0\le j\le k-1}\|\nabla\bm{u}_h^j\|_0^2
 +\nu\tau\sum_{j=0}^{k-1}
       \|\mathcal{A}_h\bm{u}_h^j\|_0^2\le C.
 \label{start-energy}
\end{equation}
Here $C$ is independent of $h$ and $\tau$.
\end{hypothesis}

%This section is dedicated to the error analysis of the numerical solution to the system (\ref{weak space discrete:eq1})-(\ref{weak space discrete:eq2}). 
To achieve an optimal-order $L^2$-norm error estimate for the velocity field, we impose the following regularity conditions on the exact solution.
%To derive the error estimates of the fully discrete solution, we will use a discrete version of the Gronwall inequality in a slightly more general form than usually found in the literature, and this detailed proof can be found in \cite{Heywood1990}.
%\begin{lemma}\label{dis:gronwall lemma}
%Let $C_*, \tau , a_n, b_n, c_n$ and $d_n$ be non-negative numbers with $n\ge 0$ such that 
%\begin{equation*}
%a_m+\tau \sum_{n=0}^mb_n\leq\tau \sum_{n=0}^{m}d_na_n+\tau \sum_{n=0}^{m}c_n+C_* \qquad \forall\,m\ge 0.
%\end{equation*}
%Suppose that $\tau  d_n<1$, for all $n$, and set $\lambda_n=(1-\tau  d_n)^{-1}$. Then 
%\begin{equation*}
%a_m+\tau \sum_{n=0}^mb_n\leq \exp\Big(\tau \sum_{n=0}^{m}\lambda_nd_n\Big)\Big\{\tau \sum_{n=0}^{m}c_n+C_*\Big\} \qquad \forall\,m\ge 0.
%\end{equation*}
%\end{lemma}
\begin{hypothesis}\label{assum:solution2}
The solution of (\ref{weak:eq1})-(\ref{weak:eq2}) exists and satisfies
\begin{equation*}
\begin{split}
&\bm{u}\in{L^\infty(0,T; \bm{H}^{1+l}(\Omega))}, \quad \frac{\partial\bm{u}}{\partial t}\in{L^2(0,T; \bm{H}^{1+l}(\Omega))}, \quad p\in {L^\infty(0,T; H^l(\Omega))},\\[2mm]
& \frac{\partial^{k+1}\bm{u}}{\partial t^{k+1}}\in{L^{2}(0,T; \bm{L}^{2}(\Omega))},\quad  \frac{\partial^{k}\bm{u}}{\partial t^{k}}\in{L^{2}(0,T; \bm{H}^{1}(\Omega))}, \quad \frac{\partial p}{\partial t}\in {L^2(0,T; H^l(\Omega))}.
%&\bm{u}_{ttt}\in{L^2(0,T; \bm{L}^{2})},\qquad \bm{A}_{ttt}\in{L^2(0,T; \bm{L}^{2})}, \qquad \theta_{ttt}\in{L^2(0,T; \bm{L}^{2})}.
\end{split}
\end{equation*}
%where the exponent $s>1/2$ depends on the regularity of the domain $\Omega$.
\end{hypothesis}
%\begin{remark}
%The domain $\Omega$ is a general polyhedral in $\mathbb{R}^3$, it might be non-smooth, non-convex, so the magnetic induction $\bm{B}$ may not be in $\bm{H}^1(\Omega)$ in general Lipschitz domains which are not convex, thus it is necessary to assume such a hypothesis, and this is the minimum regularity hypothesis to analyse error estimates. The regularity Hypothesis \ref{assum:solution2} ensures the well-posedness of (\ref{weak:eq1})-(\ref{weak:eq3}), see \cite{Tabata2005,Lorca1999,Sermange1983}.
%\end{remark}

In order to derive the optimal $H^1$-norm of velocity and the  optimal $L^2$-norm error estimate of pressure, we need to further  impose the following regularity assumption on the exact solution.
\begin{hypothesis}\label{assum:solution3}
The solution of (\ref{weak:eq1})-(\ref{weak:eq2}) exists and satisfies
\begin{equation*}
\begin{split}
 \frac{\partial^{k+1}\bm{u}}{\partial t^{k+1}}\in{L^{\infty}(0,T; \bm{H}^{1}(\Omega))},\qquad  \frac{\partial^{k}\bm{u}}{\partial t^{k}}\in{L^{\infty}(0,T; \bm{H}^{2}(\Omega))}. 
%&\bm{u}_{ttt}\in{L^2(0,T; \bm{L}^{2})},\qquad \bm{A}_{ttt}\in{L^2(0,T; \bm{L}^{2})}, \qquad \theta_{ttt}\in{L^2(0,T; \bm{L}^{2})}.
\end{split}
\end{equation*}
%where the exponent $s>1/2$ depends on the regularity of the domain $\Omega$.
\end{hypothesis}
\begin{theorem}\label{theoremmain}
Let $1\le k\le6$, and let $(\bm{u}_h^n, p_h^n)$ be generated by (\ref{weak space discrete:eq1})-(\ref{weak space discrete:eq2}). Assume Hypotheses \ref{initial value estimate}-\ref{assum:solution3} and $\max_{0\le n\le N}\|\bm{f}(t^n)\|_0\le C_f$.  For  $m+1\le N$, 
 there exist $h_0,\tau_0>0$, independent of $\tau$ and $h$, respectively, such that, for $h\le h_0$, $\tau\le\tau_0$, the fully discrete problem admits a unique solution 
\begin{align}
&\|\nabla\bm{u}_h^{m+1}\|_0^2+\sum_{n=k-1}^{m}\nu \tau \|\mathcal{A}_{h}\bm{u}_h^{n+1}\|_0^2\leq C_*, \label{lemmadeltah}
\end{align}
where $C_*>0$  is a constant independent of $\tau$ and $h$. Moreover,
\begin{equation}
\begin{split}
\Vert\bm{u}^{m+1}-\bm{u}_h^{m+1}\Vert_0%+\tau \frac{1-\mu_k^2}{2}\sum_{n=1}^m\nu\Vert\nabla(\bm{u}^n-\bm{u}_h^n)\Vert_0^2
\leq  C_u\left(h^{l+1}+\tau^{k}\right),\label{space discrete : theorem 1}
\end{split}
\end{equation}
and
\begin{equation}
\begin{split}
&\Vert\nabla\bm{u}^{m+1}-\nabla\bm{u}_h^{m+1}\Vert_0
\leq  C_u\left(h^{l}+\tau^{k}\right).\label{space discrete : theorem 2}
\end{split}
\end{equation}
Here $C_u$  is independent of $h$ and $\tau$.
\end{theorem}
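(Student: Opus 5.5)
The proof will be a simultaneous induction on $m$ that couples the stability bound (\ref{lemmadeltah}) with the error estimates. Existence and uniqueness at each step is immediate: the scheme is a linear Stokes-type problem with a known right-hand side, so it is uniquely solvable by the inf-sup condition (\ref{infsup}) and the coercivity of $\tau^{-1}\alpha_k(\cdot,\cdot)+\nu(\nabla\cdot,\nabla\cdot)$.

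For the errors, I introduce the mixed Stokes projection $(R_h\bm{u},P_hp)\in \bm{X}_h^l\times Q_h^{l-1}$ and split
\[
\bm{u}^n-\bm{u}_h^n=(\bm{u}^n-R_h\bm{u}^n)+(R_h\bm{u}^n-\bm{u}_h^n)=:\bm{\eta}^n+\bm{e}_h^n ,
\]
with $\bm{e}_h^n\in\bm{V}_h$. Subtracting the scheme from the weak form at $t^{n+1}$ gives an error equation for $\bm{e}_h$ with three right-hand sides. The first is the BDF truncation error of $\bm{u}$, which is $O(\tau^k)$ under Hypothesis \ref{assum:solution2}. The second is the discrete BDF difference of $\bm{\eta}$, which is $O(h^{l+1})$ by the projection estimates applied to $\partial_t\bm{u}$ and $\partial_t p$. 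The third is the nonlinear difference
\[
a(\gamma_k\bm{u}^n,\gamma_k\bm{u}^n,\cdot)-a(\gamma_k\bm{u}_h^n,\gamma_k\bm{u}_h^n,\cdot),
\]
together with the extrapolation error $\bm{u}^{n+1}-\gamma_k\bm{u}^n=O(\tau^k)$. I write the nonlinear difference as
\[
a(\gamma_k\bm{\xi},\gamma_k\bm{u},\cdot)+a(\gamma_k\bm{u}_h,\gamma_k\bm{\xi},\cdot),\qquad \bm{\xi}:=\bm{u}-\bm{u}_h .
\]
These terms are bounded with (\ref{bestimate}) and Lemma \ref{lemmaSobolev}. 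The point is that the $W^{1,3}$ and $L^\infty$-type norms of $\gamma_k\bm{u}_h$ are controlled by $\|\nabla\cdot\|_0^{1/2}\|\mathcal{A}_h\cdot\|_0^{1/2}$, and these quantities are bounded by the induction hypothesis (\ref{lemmadeltah}) at earlier levels. This is exactly what removes any CFL condition: no inverse inequality is used.

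For the $L^2$ estimate with $1\le k\le5$, I test with $\bm{e}_h^{n+1}-\eta_k\bm{e}_h^n$, where $\eta_k$ is the Nevanlinna–Odeh multiplier. The $G$-stability of BDF$k$ then gives a telescoping energy $|E^{n+1}|_G^2-|E^n|_G^2$ plus a viscous term $\ge c\nu\tau\|\nabla \bm{e}_h^{n+1}\|_0^2$, minus lagged contributions. The nonlinear terms are absorbed by Young's inequality into $\nu\|\nabla\bm{e}_h\|^2$ plus Gronwall-type terms. A discrete Gronwall argument then yields (\ref{space discrete : theorem 1}).

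For the $H^1$ estimate, I test with the discrete derivative $\tau^{-1}(\alpha_k\bm{e}_h^{n+1}-\beta_k\bm{e}_h^n)$ combined with the multiplier, which gives a telescoping $G$-norm of $\nabla\bm{e}_h$. The nonlinear terms are handled by bounding $\|\mathcal{A}_h\bm{e}_h\|_0$ and using $\|\nabla\bm{e}_h\|_{0,3}$; the resulting $\|\mathcal{A}_h\bm{e}_h\|_0$ terms are absorbed after also testing with $\mathcal{A}_h\bm{e}_h^{n+1}$. This gives (\ref{space discrete : theorem 2}) together with
\[
\tau\sum\|\mathcal{A}_h\bm{e}_h\|_0^2\le C(h^{l}+\tau^k)^2 .
\]

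To close the induction, I combine
\[
\|\mathcal{A}_h\bm{u}_h\|_0\le\|\mathcal{A}_h R_h\bm{u}\|_0+\|\mathcal{A}_h\bm{e}_h\|_0 ,
\]
where $\|\mathcal{A}_hR_h\bm{u}\|_0\le C\|\bm{u}\|_{2,2}$ by the discrete Stokes projection, with the error bound just obtained. This recovers (\ref{lemmadeltah}) at level $m+1$ with the same constant $C_*$, provided $\tau\le\tau_0$ and $h\le h_0$ so that $C(h^{l}+\tau^k)\le1$. The base case follows from Hypothesis \ref{initial value estimate}, including (\ref{start-energy}).

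The main obstacle is $k=6$. BDF6 admits no Nevanlinna–Odeh multiplier that gives pointwise-in-time positivity, so the multiplier argument above breaks down. Following the Akrivis–Chen–Yu–Zhou approach, I would test with a multiplier that combines four consecutive levels, i.e. a suitable linear combination of four consecutive error equations. The viscous term then yields a telescoping $G$-energy, but the discrete-derivative term is positive only after summation in time. I would handle it with a finite-interval Toeplitz (Herglotz) estimate based on the positivity of the real part of the multiplier symbol on the unit circle. The explicit convection term must then be handled inside these summed sums without losing $\tau^6$. For this I would write $\bm{u}^{n+1}-\gamma_6\bm{u}^n$ as $\tau^6$ times a weighted integral of $\partial_t^6\bm{u}$. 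I would also express $\partial_t\bm{u}^{n+1}$ through the seven levels, so that the perturbations in the summed Toeplitz inequality are controlled by a Gronwall argument without any stepsize coupling.
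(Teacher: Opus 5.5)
Your ingredients for the error estimates match the paper's: the Stokes-projection splitting, Nevanlinna--Odeh multipliers for $k\le5$, the discrete-derivative test with a telescoping $G$-norm of gradients, the four-level BDF6 combination with Toeplitz coercivity from $q(\theta)\ge 1/32$, and Gr\"onwall with weights that are only summable in time. The gap is in how you obtain the $H^1$ estimate and close the induction.

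You bound the convection through $\|\nabla\bm e_h\|_{0,3}\le C\|\nabla\bm e_h\|_0^{1/2}\|\mathcal{A}_h\bm e_h\|_0^{1/2}$ and absorb the resulting $\|\mathcal{A}_h\bm e_h\|_0$ by an extra $\mathcal{A}_h$-test. You then recover (\ref{lemmadeltah}) from $\|\mathcal{A}_h\bm u_h\|_0\le\|\mathcal{A}_hR_h\bm u\|_0+\|\mathcal{A}_h\bm e_h\|_0$. Everything therefore rests on $\tau\sum_n\|\mathcal{A}_h\bm e_h^n\|_0^2\le C(h^l+\tau^k)^2$, and Hypothesis \ref{initial value estimate} does not provide this. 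The starting values are small only in $H^1$, while (\ref{start-energy}) gives merely $\nu\tau\sum_{j<k}\|\mathcal{A}_h\bm u_h^j\|_0^2\le C$. Hence $\tau\|\mathcal{A}_h\bm e_h^j\|_0^2=O(1)$ for $j\le k-1$; an inverse estimate would only give $C\tau h^{-2}(h^l+\tau^k)^2$, i.e.\ a CFL coupling.

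These starting quantities do enter your right-hand sides:
\begin{itemize}
\item via the lagged term $\tfrac{\mu_k^2}{2}\nu\tau\|\mathcal{A}_h\bm e_h^{k-1}\|_0^2$ of the multiplier test for $k\ge3$;
\item via the Toeplitz boundary terms containing $\mathcal{A}_h\bm e_h^{3},\mathcal{A}_h\bm e_h^{4},\mathcal{A}_h\bm e_h^{5}$ for $k=6$;
\item via $\varepsilon\|\mathcal{A}_h\gamma_k(\bm e_h^n)\|_0^2$ during the first $k$ steps, when you absorb the $W^{1,3}$ term.
\end{itemize}
After Gr\"onwall they are multiplied by $\exp(C(C_*))$. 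So you obtain neither the $O((h^l+\tau^k)^2)$ bound (hence not (\ref{space discrete : theorem 2}) in the way you derive it) nor (\ref{lemmadeltah}) with the same $C_*$.

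The paper's proof is arranged to avoid exactly this. Its induction hypothesis is only the pointwise bound $\|\nabla\bm u_h^n\|_0\le C_{\Diamond}=C_{H1}+2$ for $n\le m$. The stability bound (\ref{lemmadeltah}) at level $m+1$ is derived directly from the scheme, by testing with $\mathcal{A}_h(\bm u_h^{n+1}-\mu_k\bm u_h^n)$ (resp.\ $\mathcal{A}_h\hat{\bm u}_h^{n+1}$ plus the Toeplitz estimate for $k=6$). The convection is bounded there by $C(\varepsilon\nu)^{-2}\|\nabla\gamma_k(\bm u_h^n)\|_0^6+\varepsilon\nu\|\mathcal{A}_h\gamma_k(\bm u_h^n)\|_0^2$, so the starting data only need to be bounded, which is what (\ref{start-energy}) gives.

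In the $H^1$ error estimate the paper bounds the convection by $\|\mathcal{A}_h\gamma_k(\bm u_h^n)\|_0\,\|\gamma_k(\bm u^n-\bm u_h^n)\|_{1,2}\,\|d_{\bm u}^{n+1}\|_0$. Thus $\|\mathcal{A}_h\gamma_k(\bm u_h^n)\|_0^2$ acts as a summable Gr\"onwall weight and no discrete $H^2$ norm of the error ever appears. The first levels ($d_{\bm u}^{k}$, resp.\ $d_{6\bm u}^{6},d_{6\bm u}^{7},d_{6\bm u}^{8}$) are handled by single-level tests that need only small starting $H^1$ errors. The induction then closes because $C_u(h^l+\tau^k)\le1$ gives $\|\nabla\bm u_h^{m+1}\|_0\le C_{H1}+1$.

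To repair your route, either derive (\ref{lemmadeltah}) from the scheme in this way, or treat the first steps without the multiplier and replace $\|\nabla\bm e_h\|_{0,3}$ by an $L^\infty$-type bound on $\gamma_k(\bm u_h^n)$. Note also that for the mixed projection $\|\mathcal{A}_hR_h\bm u\|_0\le C(\|\bm u\|_{2,2}+\|p\|_{1,2})$; the pressure term cannot be dropped.
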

Based on the results established in Theorem  \ref{theoremmain}, we derive the optimal error estimate for the pressure field.
\begin{theorem}\label{space discrete : theorem 3}
Under the same conditions as Theorem \ref{theoremmain}, with $k=1,\cdots,6$, the following estimate holds for all $m+1\leq N$
%Under the same conditions as Theorem \ref{space discrete : theorem 2}. %Then the continuous system (\ref{weak:eq1})-(\ref{weak:eq2}) and the finite element problem (\ref{weak space discrete:eq1})-(\ref{weak space discrete:eq2}) admit the unique solution $(\bm{u}^n, p^n)$ and $(\bm{u}_h^n,p_h^n)$, respectively. For all $1\leq m\leq N$, we  have the following estimate,
\begin{equation*}
\begin{split}
\tau  \sum_{n=k-1}^{m}\Vert p^{n+1}-p_h^{n+1}\Vert_0^2\leq  C_{u,p}\left(h^{2l}+\tau^{2k}\right),
\end{split}
\end{equation*}
where the constant $ C_{u,p}$ depends on $T,\nu,\Omega$  and the exact solution $\bm{u}, p$, but is independent
of $\tau $ and $h$.
\end{theorem}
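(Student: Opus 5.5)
The pressure estimate will follow from the discrete inf-sup condition (\ref{infsup}) applied to the pressure part of the error equation, with the velocity terms controlled by Theorem \ref{theoremmain}. I would work with the mixed Stokes projection $(\bm{R}_h\bm{u}(t), P_hp(t))\in \bm{X}_h^l\times Q_h^{l-1}$ defined by
\[
\nu(\nabla(\bm{u}-\bm{R}_h\bm{u}),\nabla\bm{v}_h)-d(\bm{v}_h,p-P_hp)=0,\qquad d(\bm{u}-\bm{R}_h\bm{u},q_h)=0,
\]
and split the errors as $\bm{u}^n-\bm{u}_h^n=\bm{\eta}^n+\bm{e}_h^n$ and $p^n-p_h^n=\xi^n+\theta_h^n$, where $\bm{\eta}=\bm{u}-\bm{R}_h\bm{u}$, $\xi=p-P_hp$, and $\bm{e}_h,\theta_h$ are the discrete parts. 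Standard estimates give $\|\xi^{n+1}\|_0\le Ch^l\|p\|_{l,2}$, so $\tau\sum\|\xi^{n+1}\|_0^2\le Ch^{2l}$ by Hypothesis \ref{assum:solution2}. It therefore remains to bound $\tau\sum_n\|\theta_h^{n+1}\|_0^2$.

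Subtracting (\ref{weak space discrete:eq1}) from (\ref{weak:eq1}) at $t^{n+1}$ gives the error equation. For every $\bm{v}_h\in\bm{X}_h^l$,
\[
d(\bm{v}_h,\theta_h^{n+1})=\Bigl(\tfrac{\alpha_k\bm{e}_h^{n+1}-\beta_k(\bm{e}_h^n)}{\tau},\bm{v}_h\Bigr)+\nu(\nabla\bm{e}_h^{n+1},\nabla\bm{v}_h)+(\bm{R}^{n+1}_{tr},\bm{v}_h)+\Bigl(\tfrac{\alpha_k\bm{\eta}^{n+1}-\beta_k(\bm{\eta}^n)}{\tau},\bm{v}_h\Bigr)+\mathcal{N}^{n+1}(\bm{v}_h).
\]
Here $\bm{R}_{tr}^{n+1}=\partial_t\bm{u}^{n+1}-\tau^{-1}(\alpha_k\bm{u}^{n+1}-\beta_k(\bm{u}^n))$ is the BDF truncation error, and $\mathcal{N}^{n+1}(\bm{v}_h)=a(\bm{u}^{n+1},\bm{u}^{n+1},\bm{v}_h)-a(\gamma_k(\bm{u}_h^n),\gamma_k(\bm{u}_h^n),\bm{v}_h)$. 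By (\ref{infsup}),
\[
\chi^*\|\theta_h^{n+1}\|_0\le\sup_{\bm{v}_h}\frac{d(\bm{v}_h,\theta_h^{n+1})}{\|\bm{v}_h\|_{1,2}},
\]
so each term only has to be bounded in the $\bm{H}^{-1}$ dual norm, which I will estimate by the $L^2$ norm where convenient. The terms are handled as follows.
\begin{itemize}
\item The viscous term is bounded by $\nu\|\nabla\bm{e}_h^{n+1}\|_0$, which is $O(h^l+\tau^k)$ by (\ref{space discrete : theorem 2}).
\item The truncation term, by the Peano kernel representation, satisfies $\tau\sum\|\bm{R}_{tr}\|_0^2\le C\tau^{2k}\|\partial_t^{k+1}\bm{u}\|^2_{L^2(L^2)}$.
\item The projection-difference term is a combination of differences of $\bm{\eta}$, each of the form $\tau^{-1}\int\partial_t\bm{\eta}$, so it is bounded by $Ch^{l}\|\partial_t\bm{u}\|_{L^2(H^{l+1})}+Ch^l\|\partial_tp\|_{L^2(H^l)}$ in $L^2(L^2)$, using only $H^1$-type projection bounds. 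This is enough, since the target is $h^l$.
\item For the nonlinear term, write $\bm{u}^{n+1}=\gamma_k(\bm{u}^n)+O(\tau^k)$ (controlled by $\partial_t^k\bm{u}\in L^2(H^1)$), and use $\gamma_k(\bm{u}^n)-\gamma_k(\bm{u}_h^n)=\gamma_k(\bm{\eta}^n+\bm{e}_h^n)$. Expanding bilinearly and applying the first bound in (\ref{bestimate}) gives a bound of the form
\[
C\bigl(\|\bm{u}\|_{L^\infty(H^2)}+\|\nabla\gamma_k(\bm{u}_h^n)\|_0\bigr)\sum_{j}\|\nabla(\bm{\eta}^{n-j}+\bm{e}_h^{n-j})\|_0\,\|\bm{v}_h\|_{1,2}+C\tau^k\|\bm{v}_h\|_{1,2}.
\]
The uniform bound (\ref{lemmadeltah}) together with Theorem \ref{theoremmain} at earlier levels then yields $O(h^l+\tau^k)$.
\end{itemize}

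The main obstacle is the discrete time-derivative term $\tau^{-1}(\alpha_k\bm{e}_h^{n+1}-\beta_k(\bm{e}_h^n))$. Theorem \ref{theoremmain} gives only pointwise $L^2$ and $H^1$ bounds on $\bm{e}_h$, and these do not control the difference quotient at order $h^l+\tau^k$. I would try first to reuse the $H^1$-error analysis behind (\ref{space discrete : theorem 2}). There the BDF derivative is the test function, and after summation (for $k\le5$ via the Nevanlinna--Odeh multipliers, for $k=6$ via the Toeplitz coercivity) one obtains the additional bound
\[
\tau\sum_{n=k-1}^m\Bigl\|\tfrac{\alpha_k\bm{e}_h^{n+1}-\beta_k(\bm{e}_h^n)}{\tau}\Bigr\|_0^2\le C(h^{2l}+\tau^{2k}).
\]
I would re-derive this bound as a by-product of that proof. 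If only the $\bm{H}^{-1}$ norm turns out to be accessible, that is still enough, because the inf-sup argument needs no more.

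Squaring the inf-sup inequality, multiplying by $\tau$, summing over $n=k-1,\dots,m$, and adding the $\xi$ contribution yields the claimed bound on $\tau\sum_{n}\|p^{n+1}-p_h^{n+1}\|_0^2$.
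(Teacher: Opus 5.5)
Your proposal is correct and follows essentially the same route as the paper. Both arguments apply the discrete inf-sup condition to the pressure error equation and control the BDF difference quotient by the summed bound $\tau\sum_{n=k-1}^{m}\|\tau^{-1}(\alpha_k\bm{e}_h^{n+1}-\beta_k(\bm{e}_h^{n}))\|_0^2\le C(h^{2l}+\tau^{2k})$, which the paper likewise takes from its $H^1$ analysis (Step III). The remaining terms are bounded via the $H^1$ velocity error, the truncation error, the projection-difference estimate and the pressure projection error. The only differences are cosmetic: you swap the names of the projection and discrete error parts, and you bound the convection term with the $\bm{H}^1\times\bm{H}^1\times\bm{H}^1$ trilinear estimate, which needs only the uniform $H^1$ bound on $\bm{u}_h^n$.
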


\begin{remark}
The proposed fully discrete IMEX-BDF$k$ schemes are energy stable for sufficiently small time steps $\tau$ and mesh sizes $h$. In space, the schemes achieve optimal-order convergence rates for the velocity in both the $L^2$- and $H^1$-norms and for the pressure in the $L^2$-norm. In time, they attain the optimal $k$th-order convergence rate for all $k=1,\cdots,6$ in all of these error estimates. Moreover, the stability and error estimates are established without any CFL-type condition, in the sense that the admissible time-step restriction is independent of the spatial mesh size.
\end{remark}

%\begin{remark}
%The constants $\tau_0$ and $h_0$ are independent of $h$ and $\tau$, respectively. Hence, no coupling condition of the form $\tau\le Ch^\alpha$ is required. 
%\end{remark}
 
%We will derive the error estimates for the numerical solution $(\bm{u}_h^n, \bar{\bm{u}}_h^n, p_h^n)$.  
\section{Proofs of the main results}
 \label{section analysis}
 
 \subsection{Preparations for  error analysis}
 Prior to analyzing the stability and optimal error estimates of the numerical solution, we first  collect several necessary lemmas. In particular, the discrete Gr$\ddot{\text{o}}$nwall lemma stated below is frequently invoked, and its  proof  can be found in \cite{Heywood1990}. %These lemmas  are essential for establishing the error estimates of the numerical schemes.
%\begin{lemma}\label{def:gronwall}(Gr$\ddot{\text{o}}$nwall Lemma 1)
%Let $C_{\star\star}, \tau , a_n, b_n, c_n$ and $d_n$ be non-negative numbers with $n\ge 0$ such that 
%\begin{equation*}
%a_m+\tau \sum_{n=0}^mb_n\leq\tau \sum_{n=0}^{m}d_na_n+\tau \sum_{n=0}^{m}c_n+C_{\star\star} \qquad \forall\,m\ge 0.
%\end{equation*}
%For all $n$, 
%Suppose that $\tau  d_n<1$, for all n, and set $\sigma_n=(1-\tau  d_n)^{-1}$.  Then, 
%\begin{equation*}
%a_m+\tau \sum_{n=0}^mb_n\leq \exp\left(\tau \sum_{n=0}^{m}\sigma_nd_n\right)\left\{\tau \sum_{n=0}^{m}c_n+C_{\star\star}\right\} \qquad \forall\,m\ge 0.
%\end{equation*}
%\end{lemma}
\begin{lemma}\label{def:gronwall2}(Gr$\ddot{\text{o}}$nwall Lemma)
Let $C_{\star\star}, \tau , a_n, b_n, c_n$ and $d_n$ be non-negative numbers with $n\ge 0$ such that 
\begin{equation*}
a_m+\tau \sum_{n=0}^mb_n\leq\tau \sum_{n=0}^{m-1}d_na_n+\tau \sum_{n=0}^{m-1}c_n+C_{\star\star} \qquad \forall\,m\ge 0.
\end{equation*}
%For all $n$, 
Then, 
\begin{equation*}
a_m+\tau \sum_{n=0}^mb_n\leq \exp\left(\tau \sum_{n=0}^{m-1}d_n\right)\left\{\tau \sum_{n=0}^{m-1}c_n+C_{\star\star}\right\} \qquad \forall\,m\ge 0.
\end{equation*}
\end{lemma}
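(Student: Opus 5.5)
This is the standard discrete Gr\"onwall lemma with an explicit (lagged) right-hand side, so I will prove it by strong induction on $m$. The key is to compare with an auxiliary sequence. Set
\[
S_m:=\tau\sum_{n=0}^{m-1}d_na_n+\tau\sum_{n=0}^{m-1}c_n+C_{\star\star},
\]
so that the hypothesis reads $a_m+\tau\sum_{n=0}^m b_n\le S_m$. In particular $a_m\le S_m$, because every $b_n\ge0$.

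\textbf{Step 1: a recursion for $S_m$.} Directly from the definition,
\[
S_{m+1}=S_m+\tau d_ma_m+\tau c_m\le (1+\tau d_m)S_m+\tau c_m .
\]
Using $1+x\le e^{x}$ and $e^{\tau d_m}\ge1$, this gives
\[
S_{m+1}\le e^{\tau d_m}\bigl(S_m+\tau c_m\bigr).
\]
No smallness condition on $\tau d_n$ is needed here, because the sums on the right-hand side stop at $m-1$; this is the advantage of the lagged (explicit) form.

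\textbf{Step 2: induction.} I claim
\[
S_m\le \exp\Bigl(\tau\sum_{n=0}^{m-1}d_n\Bigr)\Bigl\{\tau\sum_{n=0}^{m-1}c_n+C_{\star\star}\Bigr\}.
\]
For the base case $m=0$ both sums are empty, so $S_0=C_{\star\star}$ and the claim holds with equality. For the inductive step, insert the claim for $S_m$ into the recursion of Step 1 and bound $\tau c_m\le \exp\bigl(\tau\sum_{n=0}^{m-1}d_n\bigr)\tau c_m$. This yields exactly the claim at level $m+1$.

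\textbf{Conclusion.} Combining the hypothesis $a_m+\tau\sum_{n=0}^m b_n\le S_m$ with the bound on $S_m$ from Step 2 gives the stated estimate.

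There is no real obstacle. The only points needing care are the empty-sum convention at $m=0$ and the use of nonnegativity of all the quantities, which is needed both to drop $b_n$ and to multiply the inequalities.
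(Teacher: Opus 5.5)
Your argument is correct. The comparison sequence $S_m$, the recursion $S_{m+1}\le e^{\tau d_m}(S_m+\tau c_m)$ and the induction are all valid, and you use nonnegativity exactly where it is needed: to drop the $b_n$, and to have $S_m\ge 0$ when you replace $1+\tau d_m$ by $e^{\tau d_m}$. The paper gives no proof of its own and only cites Heywood--Rannacher; your comparison argument is the standard proof of this lagged (explicit) version, which is why no condition like $\tau d_n<1$ is required.
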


%Additionally, we recall  the following lemma from \cite{LiuJianGuo2010}, which plays an essential role in establishing local error estimates in the three-dimensional setting.
%\begin{lemma}\label{lemma:con-in}
%Let $\Phi:(0,\infty)\to(0,\infty)$ be continuous and increasing, and let $M>0$. Given $T_*$ such that $0<T_*<\int_M^{\infty}dz/\Phi(z)$, there exists a constant $C_*>0$ independent of $\tau >0$ 
%with the following property. Suppose that quantities $z_i, w_i\ge0$ satisfy
%\begin{equation*}
%z_m+\sum_{i=0}^{m-1}\tau  w_i\leq y_m:=M+\sum_{i=0}^{m-1}\tau  \Phi(z_i), \,\,\forall n\leq n_*,
%\end{equation*}
%with $n_*\tau \leq T_*$. Then $y_{n_*}\leq C_*$.
%\end{lemma}

For BDF methods of orders at most five, we use the Nevanlinna-Odeh multiplier technique, which is based on Dahlquist's $G$-stability theory \cite{Nevanlinna1981}.
%We  conclude by recalling the following important result established by Nevanlinna and Odeh \cite{Nevanlinna1981} based on Dahlquist’s G-stability theory, which establishes the stability properties of BDF$k$ schemes for $k=1,\cdots,5$.
\begin{lemma}\label{lemma_symmetricmatrix}
For  $k=1,\cdots,5$, there exist $0\leq \mu_k<1$, a positive definite symmetric matrix $G=(g_{ij})\in \R^{k,k}$ and real numbers $\delta_0,\cdots,\delta_k$ such that
\begin{align*}
(\alpha_k\bm{u}^{n+1}-\beta_k(\bm{u}^n), \bm{u}^{n+1}-\mu_k\bm{u}^n)=&\sum_{i,j=1}^kg_{ij}(\bm{u}^{n+1+i-k}, \bm{u}^{n+1+j-k})\\[2mm]
&-\sum_{i,j=1}^kg_{ij}(\bm{u}^{n+i-k}, \bm{u}^{n+j-k})+\left\|\sum_{i=0}^k\delta_{i}\bm{u}^{n+1+i-k}\right\|_0^2,
\end{align*}
where the smallest possible values of $\mu_k$ are
$\mu_1=\mu_2=0$, $\mu_3=0.0836, \mu_4=0.2878, \mu_5=0.8160$,  and $\alpha_k, \beta_k$ are defined in (\ref{k1})-(\ref{k5}).
\end{lemma}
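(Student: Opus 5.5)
This is the Nevanlinna--Odeh multiplier lemma (as in \cite{Nevanlinna1981}), so the plan is to reduce it to a positivity statement for a scalar rational function and then verify that statement for each $k\le5$ with the given values of $\mu_k$. First, by linearity in the inner product it suffices to treat the scalar (or componentwise/Hilbert-space) case. Write $\rho(\zeta)=\alpha_k\zeta^k-\sum_{j}\beta_{k,j}\zeta^{k-1-j}$ for the generating polynomial of the BDF$k$ difference operator, so that $\alpha_k\bm{u}^{n+1}-\beta_k(\bm{u}^n)=\sum_{i=0}^k a_i\bm{u}^{n+1+i-k}$, and $\sigma(\zeta)=\zeta^k-\mu_k\zeta^{k-1}$ for the multiplier. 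The claimed identity is then exactly Dahlquist's $G$-stability identity for the pair $(\rho,\sigma)$.

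The key tool is Dahlquist's equivalence theorem (A-stability implies G-stability, in the form proved by Baiocchi--Crouzeix): if $\rho$ and $\sigma$ have no common divisor and
\[
\mathrm{Re}\,\frac{\rho(\zeta)}{\sigma(\zeta)}>0\qquad\text{for } |\zeta|>1,
\]
then there exist a symmetric positive definite $G$ and real $\delta_0,\dots,\delta_k$ giving the identity. I would invoke this directly. Concretely, the identity is equivalent to
\[
\mathrm{Re}\bigl(\rho(\zeta)\overline{\sigma(\zeta)}\bigr)=(|\zeta|^2-1)\,Z^*GZ+\Bigl|\sum_{i}\delta_i\zeta^i\Bigr|^2,\qquad Z=(1,\zeta,\dots,\zeta^{k-1})^{\top},
\]
and on $|\zeta|=1$ this requires $\mathrm{Re}(\rho\bar\sigma)\ge0$. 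This nonnegative trigonometric polynomial is factored by the Fejér--Riesz theorem as $|\delta(\zeta)|^2$. The remainder is then divisible by $|\zeta|^2-1$, and the quotient defines $G$. Positive definiteness of $G$ follows from the strict positivity of $\mathrm{Re}(\rho/\sigma)$ outside the unit disk.

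So the real work is checking the positivity condition for each $k$. By the maximum principle for the harmonic function $\mathrm{Re}(\rho/\sigma)$ in $|\zeta|>1$ (including $\infty$), it suffices to check two things. First, $\mathrm{Re}\bigl(\rho(e^{i\theta})\overline{\sigma(e^{i\theta})}\bigr)\ge0$ for all $\theta$, which is a trigonometric polynomial in $\cos\theta$ that must be shown nonnegative on $[-1,1]$. Second, the value at infinity, $\alpha_k>0$. For $k=1,2$ we take $\mu_k=0$, since BDF1 and BDF2 are A-stable; here $G$ can be written explicitly, e.g. $G=\tfrac14\begin{pmatrix}1&-2\\-2&5\end{pmatrix}$ and $\delta=\tfrac12(1,-2,1)$ for $k=2$.

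For $k=3,4,5$, I would substitute $x=\cos\theta$ and obtain a polynomial $P_k(x;\mu)$ of degree $k$. The main obstacle is verifying $P_k(x;\mu_k)\ge0$ on $[-1,1]$ at the stated thresholds $\mu_3=0.0836$, $\mu_4=0.2878$, $\mu_5=0.8160$. These thresholds are precisely where a double root of $P_k$ touches the interval, so minimality and positivity have to be checked numerically or by Sturm sequences. The stated decimals are rounded upward, so at those values $P_k$ stays (strictly) nonnegative. Since this computation is exactly the content of \cite{Nevanlinna1981}, I would cite it for the numerical values and present only the reduction above.
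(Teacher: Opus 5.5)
Your route is essentially what the paper relies on. The paper gives no proof of this lemma and simply cites \cite{Nevanlinna1981} together with Dahlquist's $G$-stability theorem. Your structure matches that: positivity of $\mathrm{Re}(\rho/\sigma)$ for $|\zeta|>1$ gives the $G$-stability identity, positivity is checked on $|\zeta|=1$ via the minimum principle, and the numbers are taken from the literature. Your explicit $k=2$ data are correct.

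The step that fails is your claim that the stated decimals are rounded upward, so that the boundary polynomial is nonnegative at those values. For $k=4$, a direct computation gives, with $x=\cos\theta$,
\[
\mathrm{Re}\bigl(\rho(e^{i\theta})\overline{\sigma(e^{i\theta})}\bigr)=\tfrac13(1-x)\bigl[\mu(3x^2-5x+8)-2(1-x)^2(3x+1)\bigr].
\]
Hence the minimal admissible multiplier is
\[
\max_{-1\le x\le 1}\frac{2(1-x)^2(3x+1)}{3x^2-5x+8}\approx 0.287807 .
\]
The maximum is attained near $x\approx 0.1874$, the root of $9x^3-21x^2+73x-13=0$. At $\mu=0.2878$ and $x=0.187$ the bracket equals $0.2878\cdot 7.169907-2.063545218\approx -4.6\cdot 10^{-5}<0$.

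This matters because inserting $\bm{u}^j=\zeta^j$ with $|\zeta|=1$ into the identity (extended to complex sequences) forces $\mathrm{Re}(\rho\bar\sigma)=\bigl|\sum_i\delta_i\zeta^i\bigr|^2\ge 0$ on the circle. So for $\mu_4=0.2878$ taken literally, no $G$ and $\delta$ exist. The literature value is rounded to nearest, not upward, and the step you call the main obstacle, which you left to citation, is where the argument breaks.

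For the other two cases the values are fine. For $k=3$ the threshold is $(27-12\sqrt5)/2\approx 0.083592<0.0836$. For $k=5$ it is $\approx 0.81598<0.8160$.

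The repair is harmless. Read the $\mu_k$ as approximations of the exact thresholds, or use an admissible value such as $\mu_4=0.2879$; the paper only ever needs some $\mu_4<1$. As written, however, your proof asserts a positivity that is false at the stated value, and citing \cite{Nevanlinna1981} for the decimals does not fix that.
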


%Owing to the absence of a Nevanlinna and Odeh multiplier for the six-step BDF method, we resort to the energy estimates developed for BDF-$6$ in the context of an abstract parabolic PDE within a Hilbert space in \cite{Akrivis2021sinum} (sections 3.1 and 3.2) to establish both energy stability and optimal error estimates for the case $k = 6$. Further details can be found in Lemma 3.6 and Theorem 5.1 of \cite{Alessandro2025sinum}.

The absence of a Nevanlinna and Odeh multiplier for the six-step BDF method necessitates an alternative approach to establishing energy stability and optimal error estimates. We therefore employ the energy estimates developed for BDF$6$ in the setting of an abstract parabolic PDE in a Hilbert space, as presented in sections 
 3.1-3.2 of \cite{Akrivis2021sinum}. 
 Further details can be found in Lemma 3.6 and Theorem 5.1 of \cite{Alessandro2025sinum}.
\begin{lemma}\label{lemma_symmetricmatrix1}
%Let $(\cdot ,\cdot )$ be a real inner product on a Hilbert space,
There exists a positive definite symmetric matrix $G=(g_{ij})\in \R^{6,6}$ such that, for every sequence $\{\bm{u}^j\}$ in the underlying real Hilbert space,
\begin{align*}
&(\alpha_6\bm{u}^{n+1}-\beta_6(\bm{u}^n), \hat{\bm{u}}^{n+1})\ge\sum_{i,j=1}^6g_{ij}(\bm{u}^{n+i-5}, \bm{u}^{n+j-5})
-\sum_{i,j=1}^6g_{ij}(\bm{u}^{n+i-6}, \bm{u}^{n+j-6}),
\end{align*}
where $\hat{\bm{u}}^{n+1}=\bm{u}^{n+1}-\frac{13}{9}\bm{u}^{n}+\frac{25}{36}\bm{u}^{n-1}-\frac{1}{9}\bm{u}^{n-2}$, and $\alpha_6, \beta_6$ are defined in (\ref{k6}).
\end{lemma}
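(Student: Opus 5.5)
The plan is to treat the statement as an instance of Dahlquist's equivalence between $A$-stability and $G$-stability for a pair of generating polynomials, in the form used by Baiocchi–Crouzeix and Nevanlinna–Odeh \cite{Nevanlinna1981}, as in \cite{Akrivis2021sinum}. Writing $\zeta$ for the backward shift, I introduce the BDF6 generating polynomial
\[
\delta(\zeta)=\sum_{j=1}^{6}\frac{1}{j}(1-\zeta)^j=\alpha_6-6\zeta+\tfrac{15}{2}\zeta^2-\tfrac{20}{3}\zeta^3+\tfrac{15}{4}\zeta^4-\tfrac{6}{5}\zeta^5+\tfrac16\zeta^6 .
\]
Then $\alpha_6\bm{u}^{n+1}-\beta_6(\bm{u}^n)=\delta(\zeta)\bm{u}^{n+1}$. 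The multiplier polynomial is
\[
\mu(\zeta)=1-\tfrac{13}{9}\zeta+\tfrac{25}{36}\zeta^2-\tfrac19\zeta^3,
\]
so that $\hat{\bm{u}}^{n+1}=\mu(\zeta)\bm{u}^{n+1}$. I pad $\mu$ with zero coefficients to degree $6$, so that both sides of the desired inequality involve exactly the window $\bm{u}^{n-5},\dots,\bm{u}^{n+1}$.

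\textbf{Step 1: $A$-stability of the pair.} I will verify the frequency-domain condition
\[
\operatorname{Re}\bigl[\delta(e^{i\theta})\,\overline{\mu(e^{i\theta})}\bigr]\ge 0 \qquad \forall\,\theta\in[0,2\pi].
\]
Expanding the product gives a real cosine polynomial $\sum_{j=0}^{6}c_j\cos(j\theta)$ with explicit rational coefficients $c_j$. Substituting $x=\cos\theta$ via Chebyshev polynomials turns it into a polynomial $P(x)$ of degree at most $6$ on $[-1,1]$. Since $\delta(1)=0$, the factor $(1-x)$ splits off, leaving $P(x)=(1-x)R(x)$. I then check $R\ge0$ on $[-1,1]$, either by exhibiting $R$ as a sum of squares plus nonnegative multiples of $(1\pm x)$, or by locating its real roots and checking signs. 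I also check that $\delta$ and $\mu$ have no common factor; this is immediate because the roots of $\mu$ (which lie outside the closed unit disk) are not roots of $\delta$. This positivity verification is the main obstacle: it is an exact but delicate computation, since the margin is small. The specific coefficients $13/9,\,25/36,\,1/9$ were tuned by Akrivis et al.\ precisely so that it holds, so I would carry it out in exact rational arithmetic.

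\textbf{Step 2: from $A$-stability to $G$-stability.} By Dahlquist's theorem in the Baiocchi–Crouzeix form, the condition of Step 1 for coprime $(\delta,\mu)$ implies the existence of a symmetric positive definite $G\in\R^{6,6}$ and reals $\kappa_0,\dots,\kappa_6$ such that, for all real $x_0,\dots,x_6$,
\[
\Bigl(\sum_{i=0}^6\delta_{6-i}x_i\Bigr)\Bigl(\sum_{i=0}^6\mu_{6-i}x_i\Bigr)=X_1^\top G X_1-X_0^\top G X_0+\Bigl(\sum_{i}\kappa_i x_i\Bigr)^2 .
\]
Here $\delta_j,\mu_j$ denote the coefficients of $\zeta^j$, $X_1=(x_1,\dots,x_6)^\top$ and $X_0=(x_0,\dots,x_5)^\top$. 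Concretely, this identity comes from the Fejér–Riesz factorization of the nonnegative trigonometric polynomial in Step 1 as $|\kappa(e^{i\theta})|^2$; the remainder $\delta(\zeta)\mu(\zeta^{-1})\zeta^{6}$-type form minus $|\kappa|^2$ then vanishes on the unit circle, which is exactly the telescoping structure that determines $G$. Positive definiteness of $G$ follows from the strict inequality away from $\theta=0$ together with coprimality.

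\textbf{Step 3: transfer to the Hilbert space.} The identity in Step 2 is a polynomial identity in the scalar variables $x_i$. It therefore lifts verbatim to any real inner product space, with each product $x_ix_j$ replaced by $(\bm{u}^{n-5+i},\bm{u}^{n-5+j})$; this is done by expanding in an orthonormal basis, or simply by polarization, since both sides are symmetric bilinear forms in the vector $(x_0,\dots,x_6)$. Taking $x_i=\bm{u}^{n-5+i}$ turns the left side into $(\alpha_6\bm{u}^{n+1}-\beta_6(\bm{u}^n),\hat{\bm{u}}^{n+1})$, and the two $G$-terms into the ones in the statement. Dropping the nonnegative term $\|\sum_i\kappa_i\bm{u}^{n-5+i}\|_0^2$ then yields the asserted inequality.
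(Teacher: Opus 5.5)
Your argument is correct and follows the route the paper relies on. The paper gives no proof of this lemma and defers to \cite{Akrivis2021sinum}, where the multiplier is shown to make the pair $(\delta,\mu)$ $A$-stable and Dahlquist's $G$-stability theorem (Baiocchi--Crouzeix form) is then invoked, exactly as in your Steps 1--3. Your Step 1 computation does close: with $x=\cos\theta$,
\[
\operatorname{Re}\bigl[\delta(e^{i\theta})\overline{\mu(e^{i\theta})}\bigr]=\frac{1-x}{270}\bigl(-1440x^5+4784x^4-5150x^3+1334x^2+611x+71\bigr).
\]
The second factor, divided by $270$, equals $7/9$ at $x=1$ and $71/270$ at $x=0$, and its minimum on $[-1,1]$ is about $0.099$, attained near $x\approx-0.125$.

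One justification in Step 2 must be corrected. Positive definiteness of $G$ does not follow from the boundary inequality plus coprimality. For example, $\delta(\zeta)=1-\zeta$ and $\mu(\zeta)=1-2\zeta$ are coprime and satisfy $\operatorname{Re}[\delta\bar\mu]=3-3\cos\theta\ge0$ on the circle. Yet requiring $(v^1-v^0)(v^1-2v^0)\ge g(v^1)^2-g(v^0)^2$ for all $v^0,v^1$ forces $g=-1/2$.

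The hypothesis of the Baiocchi--Crouzeix theorem is $\operatorname{Re}(\delta/\mu)>0$ in the open unit disk. You obtain it from your boundary inequality via the minimum principle for the harmonic function $\operatorname{Re}(\delta/\mu)$. That step is legitimate only because $\mu(\zeta)=(1-\zeta/2)^2(1-\tfrac49\zeta)$ has no zeros in the closed disk. So the root location of $\mu$ that you mention is an essential input to $A$-stability, not merely to coprimality. With that sentence corrected, the proof is complete.
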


To facilitate the error analysis, we introduce the Galerkin projection operators. Given $(\bm{u}, p)\in (\bm{X}_0\times Q)$, then our aim is to find $\mathcal{P}_{h}\bm{u}\in \bm{X}_h^{l}$, $\mathcal{Q}_{h} p\in Q_h^{l-1}$, for all $(\bm{v}_h, q_h)\in (\bm{X}_h^{l}\times Q_h^{l-1})$, the following holds
\begin{align}
&\nu(\nabla\mathcal{P}_{h}\bm{u},\nabla \bm{v}_h)-d(\bm{v}_h, \mathcal{Q}_{h}p)+d(\mathcal{P}_{h}\bm{u}, q_h)% \nonumber\\
=\nu(\nabla\bm{u},\nabla \bm{v}_h)-d(\bm{v}_h, p)+d(\bm{u}, q_h).\label{Galerkin projection:define}
\end{align}
As established in \cite{Girault1986},   the following key properties are recalled
\begin{align}
&\Vert \bm{u}^{n+1}-\mathcal{P}_h\bm{u}^{n+1}\Vert_0+h\Vert\nabla (\bm{u}^{n+1}-\mathcal{P}_h\bm{u}^{n+1})\Vert_0+h\Vert p^{n+1}-\mathcal{Q}_hp^{n+1}\Vert_0\nonumber\\[2mm]
&\leq Ch^{1+l}\left(\Vert\bm{u}^{n+1}\Vert_{1+l,2}+\Vert p^{n+1}\Vert_{l,2}\right),\nonumber\\[2mm]
&\left\|\frac{\partial(\bm{u}^{n+1}-\mathcal{P}_h\bm{u}^{n+1})}{\partial t}\right\|_{0}\leq Ch^{1+l}\left(\left\|\frac{\partial\bm{u}^{n+1}}{\partial t}\right\|_{l+1,2}+\left\|\frac{\partial p^{n+1}}{\partial t}\right\|_{l,2}\right).\label{galerkin projection propertites:1}
\end{align}

For notational convenience,  we define here and hereafter,
%For the sake of convenience, we introduce the following notations,
\begin{eqnarray*}
&e_{\bm{u}}^{n+1}=\bm{u}^{n+1}-\mathcal{P}_h\bm{u}^{n+1},\, e_p^{n+1}=p^{n+1}-\mathcal{Q}_hp^{n+1},
\, \eta_{\bm{u}}^{n+1}=\mathcal{P}_h\bm{u}^{n+1}-\bm{u}_h^{n+1},\, \eta_p^{n+1}=\mathcal{Q}_hp^{n+1}-p_h^{n+1}.
\end{eqnarray*}

Setting $\bm{v}=\bm{v}_h$ in (\ref{weak:eq1}) and $q=q_h$ in (\ref{weak:eq2}),  and combining with (\ref{weak space discrete:eq1})-(\ref{weak space discrete:eq2}) as well as (\ref{Galerkin projection:define}), we derive the error equations for ${\bm{u}}_h^{n+1}$ and  $p_h^{n+1}$ as
\begin{align}
&(\alpha_k{\eta}_{\bm{u}}^{n+1}-\beta_k({\eta}_{\bm{u}}^{n}),\bm{v}_h)+\nu\tau (\nabla{\eta}_{\bm{u}}^{n+1},\nabla\bm{v}_h)-\tau (\eta_p^{n+1},\nabla\cdot\bm{v}_h)\nonumber\\[2mm]
=&-\tau a(\bm{u}^{n+1},\bm{u}^{n+1}, \bm{v}_h)+\tau a(\gamma_k(\bm{u}_h^{n}),\gamma_k(\bm{u}_h^{n}), \bm{v}_h)\nonumber\\[2mm]
&+(\alpha_k\mathcal{P}_h\bm{u}^{n+1}-\beta_k(\mathcal{P}_h\bm{u}^{n})-\tau \partial_t\bm{u}^{n+1},\bm{v}_h)\nonumber\\[2mm]
=& (\tau  E_{k,1}+E_{k,2}+\tau E_{k,3},\bm{v}_h),\label{eq:erroreq1}\\[2mm]
%\end{align}
%\begin{align}
&\tau (q_h,\nabla\cdot{\eta}_{\bm{u}}^{n+1})=0,\label{eq:erroreq2}
\end{align}
where
\begin{align*}
E_{k,1}=&-\bm{u}^{n+1}\cdot\nabla\bm{u}^{n+1}+\gamma_k(\bm{u}_h^{n})\cdot\nabla\gamma_k(\bm{u}_h^{n})\\[2mm]
=&[-\bm{u}^{n+1}+\gamma_k(\bm{u}^n)-\gamma_k(\bm{u}^n-\bm{u}_h^n)]\cdot\nabla\bm{u}^{n+1}\\[2mm]
&-\gamma_k(\bm{u}_h^{n})\cdot\nabla[\bm{u}^{n+1}-\gamma_k(\bm{u}^n)]-\gamma_k(\bm{u}_h^{n})\cdot\nabla\gamma_k(\bm{u}^n-\bm{u}_h^{n}),
\end{align*}
and
\begin{align*}
E_{k,2}=&\alpha_k\bm{u}^{n+1}-\beta_k(\bm{u}^{n})-\tau \frac{\partial\bm{u}^{n+1}}{\partial t}%\\[2mm]
=\sum_{i=1}^ko_i\int_{t^{n+1-i}}^{t^{n+1}}(s-t^{n+1-i})^k\frac{\partial^{k+1} \bm{u}}{\partial t^{k+1}}(s)\,ds,
\end{align*}
where $o_i$  are fixed, bounded constants determined by the truncation errors, for example, in the case $k=4$,  the following expression holds
\begin{align*}
E_{4,2}=&\alpha_4\bm{u}^{n+1}-\beta_4(\bm{u}^{n})-\tau \frac{\partial\bm{u}^{n+1}}{\partial{t}}\\[2mm]
%=&4(\mathcal{P}_h\bm{u}^{n+1}-\mathcal{P}_h\bm{u}^{n})-3(\mathcal{P}_h\bm{u}^{n+1}-\mathcal{P}_h\bm{u}^{n-1})\\
%&+\frac{4}{3}(\mathcal{P}_h\bm{u}^{n+1}-\mathcal{P}_h\bm{u}^{n-2})-\frac{1}{4}(\mathcal{P}_h\bm{u}^{n+1}-\mathcal{P}_h\bm{u}^{n-3})-\tau \partial_{t}\mathcal{P}_h\bm{u}^{n+1}\\
=&\frac{1}{6}\int_{t^{n}}^{t^{n+1}}(s-t^{n})^4\frac{\partial^{5}\bm{u}}{\partial t^{5}}(s)\,ds-\frac{1}{8}\int_{t^{n-1}}^{t^{n+1}}(s-t^{n-1})^4\frac{\partial^{5}\bm{u}}{\partial t^{5}}(s)\,ds\\[2mm]
&+\frac{1}{18}\int_{t^{n-2}}^{t^{n+1}}(s-t^{n-2})^4\frac{\partial^{5}\bm{u}}{\partial t^{5}}(s)\,ds-\frac{1}{96}\int_{t^{n-3}}^{t^{n+1}}(s-t^{n-3})^4\frac{\partial^{5} \bm{u}}{\partial t^{5}}(s)\,ds,
\end{align*}
and in the case $k=6$,  the following expression holds
\begin{align*}
E_{6,2}=&\alpha_6\bm{u}^{n+1}-\beta_6(\bm{u}^{n})-\tau\frac{\partial\bm{u}^{n+1}}{\partial{t}}\\[2mm]
%=&6(\mathcal{P}_h\bm{u}^{n+1}-\mathcal{P}_h\bm{u}^{n})-\frac{15}{2}(\mathcal{P}_h\bm{u}^{n+1}-\mathcal{P}_h\bm{u}^{n-1})+\frac{20}{3}(\mathcal{P}_h\bm{u}^{n+1}-\mathcal{P}_h\bm{u}^{n-2})\\
%&-\frac{15}{4}(\mathcal{P}_h\bm{u}^{n+1}-\mathcal{P}_h\bm{u}^{n-3})+\frac{6}{5}(\mathcal{P}_h\bm{u}^{n+1}-\mathcal{P}_h\bm{u}^{n-4})-\frac{1}{6}(\mathcal{P}_h\bm{u}^{n+1}-\mathcal{P}_h\bm{u}^{n-5})-\tau\partial_{t}\mathcal{P}_h\bm{u}^{n+1}\\
=&\frac{1}{120}\int_{t^{n}}^{t^{n+1}}(s-t^{n})^6\frac{\partial^{7} \bm{u}}{\partial t^{7}}(s)\,ds-\frac{1}{96}\int_{t^{n-1}}^{t^{n+1}}(s-t^{n-1})^6\frac{\partial^{7} \bm{u}}{\partial t^{7}}(s)\,ds\\[2mm]
&+\frac{1}{108}\int_{t^{n-2}}^{t^{n+1}}(s-t^{n-2})^6\frac{\partial^{7} \bm{u}}{\partial t^{7}}(s)\,ds-\frac{1}{192}\int_{t^{n-3}}^{t^{n+1}}(s-t^{n-3})^6\frac{\partial^{7}\bm{u}}{\partial t^{7}}(s)\,ds\\[2mm]
&+\frac{1}{600}\int_{t^{n-4}}^{t^{n+1}}(s-t^{n-4})^6\frac{\partial^{7} \bm{u}}{\partial t^{7}}(s)\,ds-\frac{1}{4320}\int_{t^{n-5}}^{t^{n+1}}(s-t^{n-5})^6\frac{\partial^{7} \bm{u}}{\partial t^{7}}(s)\,ds.
\end{align*}
Moreover, $E_{k,3}$ is represented as follows
\begin{align*}
E_{k,3}= \alpha_k\mathcal{P}_h\bm{u}^{n+1}-\beta_k(\mathcal{P}_h\bm{u}^{n})-[\alpha_k\bm{u}^{n+1}-\beta_k(\bm{u}^{n})].
\end{align*}
To facilitate the subsequent proof, we first give an estimate for $E_{k,3}$.
\begin{lemma}
\label{lemmaEk3}
Let $1\leq k\leq 6$ and $n\geq k-1$. Suppose that the exact solution satisfies the regularity assumptions in Hypothesis \ref{assum:solution2}. Then
\begin{align*}
  \| E_{k,3}\|_0^2\leq  Ch^{2(l+1)}\tau \int_{t^{n+1-k}}^{t^{n+1}} \left( \left\|\frac{\partial\bm u}{\partial t}(s)\right\|_{l+1,2}^2+ \left\|\frac{\partial p}{\partial t}(s)\right\|_{l,2}^2\right)\,ds.
\end{align*}
where $C>0$ is independent of $h$, $\tau$.
\end{lemma}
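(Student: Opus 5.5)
Write $\rho:=\mathcal P_h\bm u-\bm u=-e_{\bm u}$, so that $E_{k,3}=\alpha_k\rho^{n+1}-\beta_k(\rho^n)$. Since the BDF$k$ scheme is consistent, $\alpha_k-\sum_j\beta_{k,j}=0$, where $\beta_{k,j}$ denotes the coefficient of $\bm u^{n-j}$ in $\beta_k$. Hence we can rewrite the combination as a sum of differences:
\begin{align*}
E_{k,3}=\sum_{j=0}^{k-1}\beta_{k,j}\bigl(\rho^{n+1}-\rho^{n-j}\bigr).
\end{align*}
This is the same rearrangement indicated in the commented lines for $E_{4,2}$ and $E_{6,2}$. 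Each difference is an integral of the time derivative:
\begin{align*}
\rho^{n+1}-\rho^{n-j}=\int_{t^{n-j}}^{t^{n+1}}\partial_t\rho(s)\,ds .
\end{align*}

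The next step is to note that $\mathcal P_h,\mathcal Q_h$ are linear and time-independent. Therefore $\partial_t\rho=\mathcal P_h\partial_t\bm u-\partial_t\bm u$ is the mixed Stokes projection error of the pair $(\partial_t\bm u,\partial_t p)$. By the second estimate in (\ref{galerkin projection propertites:1}), this gives
\begin{align*}
\|\partial_t\rho(s)\|_0\le Ch^{l+1}\bigl(\|\partial_t\bm u(s)\|_{l+1,2}+\|\partial_t p(s)\|_{l,2}\bigr)
\end{align*}
for a.e. $s$. The right-hand side lies in $L^2$ in time by Hypothesis \ref{assum:solution2}.

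We then apply Minkowski's inequality followed by Cauchy--Schwarz in time:
\begin{align*}
\Bigl\|\int_{t^{n-j}}^{t^{n+1}}\partial_t\rho\,ds\Bigr\|_0^2\le (j+1)\tau\int_{t^{n-j}}^{t^{n+1}}\|\partial_t\rho(s)\|_0^2\,ds .
\end{align*}
Since $j\le k-1$, every interval $[t^{n-j},t^{n+1}]$ is contained in $[t^{n+1-k},t^{n+1}]$; the assumption $n\ge k-1$ guarantees these time levels exist. Summing the $k$ terms with $(a_1+\cdots+a_k)^2\le k\sum a_i^2$ and absorbing the bounded coefficients $|\beta_{k,j}|$, $k$ and $(j+1)$ into $C$ yields the claimed bound.

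The only mildly delicate point is justifying that the time derivative commutes with the projection, i.e.\ that $\partial_t(\mathcal P_h\bm u)=\mathcal P_h(\partial_t\bm u)$. This follows because the defining problem (\ref{Galerkin projection:define}) has time-independent coefficients and the data are $H^1$ in time. It also explains why the estimate involves $\partial_t p$ rather than higher temporal derivatives of $p$. Beyond this, the argument is routine.
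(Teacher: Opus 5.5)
Your proof is correct and follows essentially the same route as the paper: use consistency of the BDF$k$ coefficients to write $E_{k,3}$ as a combination of differences $\rho^{n+1}-\rho^{n-j}$, express each difference as $\int_{t^{n-j}}^{t^{n+1}}\partial_t\rho\,ds$, and conclude with Cauchy--Schwarz in time together with the mixed Stokes projection estimate for the pair $(\partial_t\bm u,\partial_t p)$. Your explicit justification that $\partial_t$ commutes with the time-independent projection is a step the paper leaves implicit.
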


\begin{proof}
Write the BDF$k$ difference operator in the form
\begin{align*}
  \alpha_ke_{\bm{u}}^{n+1}-\beta_k(e_{\bm{u}}^{n})=\sum_{j=0}^{k}a_{k,j}e_{\bm{u}}^{n+1-j},
\end{align*}
where the coefficients $a_{k,j}$ are fixed and satisfy $\sum_{j=0}^{k}a_{k,j}=0$. It follows that
\begin{align}
 \alpha_ke_{\bm{u}}^{n+1}-\beta_k(e_{\bm{u}}^{n}) &=  \sum_{j=1}^{k}
  a_{k,j}
  \left(e_{\bm u}^{n+1-j}-e_{\bm u}^{n+1}\right)
  =
  -
  \sum_{j=1}^{k}a_{k,j}
  \int_{t^{n+1-j}}^{t^{n+1}}
  \partial_t e_{\bm u}(s)\,ds.
  \label{eq:projection-difference-integral}
\end{align}
Since $k\leq6$ and all the coefficients are fixed, applying the Cauchy-Schwarz inequality together with the approximation property of the mixed Stokes projection yields
\begin{align*}
  \| E_{k,3}\|_0^2\leq&C\tau\int_{t^{n+1-k}}^{t^{n+1}}\|\partial_t e_{\bm u}(s)\|_0^2\,ds\\
 \leq& Ch^{2(l+1)}\tau \int_{t^{n+1-k}}^{t^{n+1}} \left( \left\|\frac{\partial\bm u}{\partial t}(s)\right\|_{l+1,2}^2+ \left\|\frac{\partial p}{\partial t}(s)\right\|_{l,2}^2\right)\,ds.
\end{align*}
This completes the estimate for $E_{k,3}$.
\end{proof}

\subsection{Proof of optimal error estimates for velocity}
We now prove optimal $L^2$- and $H^1$-error estimates for the velocity of the fully discrete numerical solution to the system (\ref{weak space discrete:eq1})-(\ref{weak space discrete:eq2}). 
\begin{proof}[Proof of Theorem \ref{theoremmain}] 
\textbf{Step I}:  This proof is divided into four steps. Firstly, we denote
\begin{align}
C_{H1}=\max_{0\leq t\leq T}\|\nabla\bm{u}(\cdot, t) \|_0\quad \text{and}\,\,\,C_{\Diamond}=C_{H1}+2.\label{induction0}
\end{align}
We need to prove a uniform bound of $\nabla\bm{u}_h^n$ by induction,
\begin{align}
\|\nabla\bm{u}_h^n \|_0\leq C_{\Diamond}, \quad \forall\, n\leq N.\label{induction1}
\end{align}
Under Hypothesis \ref{initial value estimate}, (\ref{induction1}) certainly holds for $n=0,\cdots,k-1$. Now suppose we have 
\begin{align}
\|\nabla\bm{u}_h^n \|_0\leq C_{\Diamond}, \quad \forall\, n\leq m.\label{induction2}
\end{align}
we shall prove below
\begin{align}
\|\nabla\bm{u}_h^{m+1} \|_0\leq C_{\Diamond} \label{induction3}
\end{align}
for the same constant $C_{\Diamond}$.

For $1\leq k\leq 5$, choosing  $\bm{v}_h=\mathcal{A}_{h}\tilde{\bm{u}}_h^{n+1}\in\bm{X}_{0h}^{l}$ in (\ref{weak space discrete:eq1}),  where $\tilde{\bm{u}}_h^{n+1}={\bm{u}}_h^{n+1}-\mu_k{\bm{u}}_h^{n}$, and taking  $q_h=0$ in (\ref{weak space discrete:eq2}), then adding the two equations, there holds
\begin{align}
&(\alpha_k{\bm{u}}_h^{n+1}-\beta_k({\bm{u}}_h^{n}), \mathcal{A}_{h}\tilde{\bm{u}}_h^{n+1})+\nu\tau (\mathcal{A}_{h}{\bm{u}}_h^{n+1}, \mathcal{A}_{h}\tilde{\bm{u}}_h^{n+1})\nonumber\\[2mm]
&+\tau  a(\gamma_k(\bm{u}_h^n),\gamma_k(\bm{u}_h^n), \mathcal{A}_{h}\tilde{\bm{u}}_h^{n+1})=\tau (\bm{f}^{n+1}, \mathcal{A}_{h}\tilde{\bm{u}}_h^{n+1}).\label{fully:uh0}
\end{align}
For the first two terms on the left-hand side of (\ref{fully:uh0}), we invoke Lemma \ref{lemma_symmetricmatrix} to derive the following identity
\begin{align}
&(\alpha_k{\bm{u}}_h^{n+1}-\beta_k({\bm{u}}_h^{n}), \mathcal{A}_{h}\tilde{\bm{u}}_h^{n+1})%\nonumber\\
=(\nabla(\alpha_k{\bm{u}}_h^{n+1}-\beta_k({\bm{u}}_h^{n})), \nabla({\bm{u}}_h^{n+1}-\mu_k{\bm{u}}_h^{n}))\nonumber\\[2mm]
=&\sum_{i,j=1}^kg_{ij}(\nabla{\bm{u}}_h^{n+1+i-k}, \nabla{\bm{u}}_h^{n+1+j-k})%\nonumber\\
-\sum_{i,j=1}^kg_{ij}(\nabla{\bm{u}}_h^{n+i-k}, \nabla{\bm{u}}_h^{n+j-k})+\left\|\sum_{i=0}^k\delta_{i}\nabla{\bm{u}}_h^{n+1+i-k}\right\|_0^2.\label{fully:uh1}
\end{align}
By performing a direct calculation, we derive the following result
\begin{align}
&\nu(\mathcal{A}_{h}{\bm{u}}_h^{n+1}, \mathcal{A}_{h}({\bm{u}}_h^{n+1}-\mu_k{\bm{u}}_h^{n}))
=\nu\|\mathcal{A}_{h}{\bm{u}}_h^{n+1}\|_0^2-\nu\mu_k(\mathcal{A}_{h}{\bm{u}}_h^{n+1}, \mathcal{A}_{h}{\bm{u}}_h^{n}).\label{fully:uh2}
\end{align}
Inserting (\ref{fully:uh1})-(\ref{fully:uh2}) into (\ref{fully:uh0}) can be inferred  
\begin{align}
&\sum_{i,j=1}^kg_{ij}(\nabla{\bm{u}}_h^{n+1+i-k}, \nabla{\bm{u}}_h^{n+1+j-k})-\sum_{i,j=1}^kg_{ij}(\nabla{\bm{u}}_h^{n+i-k}, \nabla{\bm{u}}_h^{n+j-k})\nonumber\\[2mm]
&+\left\|\sum_{i=0}^k\delta_{i}\nabla{\bm{u}}_h^{n+1+i-k}\right\|_0^2+\nu\tau \|\mathcal{A}_{h}{\bm{u}}_h^{n+1}\|_0^2-\nu\tau \mu_k(\mathcal{A}_{h}{\bm{u}}_h^{n+1}, \mathcal{A}_{h}{\bm{u}}_h^{n})\nonumber\\[2mm]
&+\tau  a(\gamma_k(\bm{u}_h^n), \gamma_k(\bm{u}_h^n), \mathcal{A}_{h}\tilde{\bm{u}}_h^{n+1})=\tau (\bm{f}^{n+1}, \mathcal{A}_{h}\tilde{\bm{u}}_h^{n+1}).\label{fully:uh00}
\end{align}
%For the nonlinear term on the left-hand side of (\ref{eq:GSAV1}), we rewrite it as
Applying Lemma \ref{lemmaSobolev} to control the nonlinear convection term, we obtain 
\begin{align}
&\left| a(\gamma_k(\bm{u}_h^n), \gamma_k(\bm{u}_h^n), \mathcal{A}_{h}({\bm{u}}_h^{n+1}-\mu_k{\bm{u}}_h^{n}))\right|\nonumber\\[2mm]
\leq& C\|\gamma_k({\bm{u}}_h^n)\|_{0,6}\|\nabla\gamma_k({\bm{u}}_h^n)\|_{0,3}\|\mathcal{A}_{h}({\bm{u}}_h^{n+1}-\mu_k{\bm{u}}_h^{n})\|_0\nonumber\\[2mm]
\leq& C\|\nabla\gamma_k({\bm{u}}_h^n)\|_0\|\nabla\gamma_k({\bm{u}}_h^n)\|_0^{1/2}\|\gamma_k({\bm{u}}_h^n)\|_{2,h}^{1/2}(\|\mathcal{A}_{h}{\bm{u}}_h^{n+1}\|_0+\mu_k\|\mathcal{A}_{h}{\bm{u}}_h^{n}\|_0)\nonumber\\[2mm]
\leq&C(\varepsilon\nu)^{-1}\|\nabla\gamma_k({\bm{u}}_h^n)\|_0^3\|\gamma_k({\bm{u}}_h^n)\|_{2,h}+\varepsilon\nu\|\mathcal{A}_{h}{\bm{u}}_h^{n+1}\|_0^2+\varepsilon\nu\mu_k^2\|\mathcal{A}_{h}{\bm{u}}_h^{n}\|_0^2\nonumber\\[2mm]
\leq&C(\varepsilon\nu)^{-2}\|\nabla\gamma_k({\bm{u}}_h^n)\|_0^6+\varepsilon\nu\|\mathcal{A}_{h}\gamma_k({\bm{u}}_h^n)\|_{0}^2+\varepsilon\nu\|\mathcal{A}_{h}{\bm{u}}_h^{n+1}\|_0^2+\varepsilon\nu\mu_k^2\|\mathcal{A}_{h}{\bm{u}}_h^{n}\|_0^2,\label{fully:uh3a}
\end{align}
%where we apply the elliptic regularity estimate $\|\gamma_k({\bm{u}}_h^n)\|_{2,h}^2\leq \|\mathcal{A}_{h}(\gamma_k({\bm{u}}_h^n))\|_{0}^2$ in the last inequality above.  
By invoking Young's inequality, there holds
\begin{align}
&\left|\nu\mu_k(\mathcal{A}_{h}{\bm{u}}_h^{n+1}, \mathcal{A}_{h}{\bm{u}}_h^{n})\right|\leq \frac{1}{2}\nu\|\mathcal{A}_{h}{\bm{u}}_h^{n+1}\|_0^2+\frac{1}{2}\nu\mu_k^2\|\mathcal{A}_{h}{\bm{u}}_h^{n}\|_0^2.\label{fully:uh3}
\end{align}
For the right-hand side of  (\ref{fully:uh00}), it implies
\begin{align}
&\left|(\bm{f}^{n+1}, \mathcal{A}_{h}\tilde{\bm{u}}_h^{n+1})\right|\leq \varepsilon\nu\|\mathcal{A}_{h}{\bm{u}}_h^{n+1}\|_0^2+\varepsilon\nu\mu_k^2\|\mathcal{A}_{h}{\bm{u}}_h^{n}\|_0^2+C(\varepsilon\nu)^{-1}\|\bm{f}^{n+1}\|_0^2. \label{fully:uh4}
\end{align}
Now, combining (\ref{fully:uh3a})-(\ref{fully:uh4}) with (\ref{fully:uh00}), we obtain
\begin{align}
&\sum_{i,j=1}^kg_{ij}(\nabla{\bm{u}}_h^{n+1+i-k}, \nabla{\bm{u}}_h^{n+1+j-k})-\sum_{i,j=1}^kg_{ij}(\nabla{\bm{u}}_h^{n+i-k}, \nabla{\bm{u}}_h^{n+j-k})\nonumber\\[2mm]
&+\left\|\sum_{i=0}^k\delta_{i}\nabla{\bm{u}}_h^{n+1+i-k}\right\|_0^2+\nu\tau (1/2-2\varepsilon)\|\mathcal{A}_{h}{\bm{u}}_h^{n+1}\|_0^2\nonumber\\[2mm]
%\end{align}
%\begin{align}
\leq&\nu\tau (1/2+2\varepsilon)\mu_k^2\|\mathcal{A}_{h}{\bm{u}}_h^{n}\|_0^2+C(\varepsilon\nu)^{-1} \tau \|\bm{f}^{n+1}\|_0^2\nonumber\\[2mm]
&+C(\varepsilon\nu)^{-2} \tau \|\nabla\gamma_k({\bm{u}}_h^n)\|_0^6+2\varepsilon\nu\tau \|\mathcal{A}_{h}\gamma_k({\bm{u}}_h^n)\|_{0}^2.\label{fully:uh5}
\end{align}
We can choose $\varepsilon$  small enough, such that 
%\begin{align*}
$\frac{1}{2}-2\varepsilon>(\frac{1}{2}+2\varepsilon)\mu_k^2+2\varepsilon$. 
%\end{align*}
Noting that $G=(g_{ij})$  is a symmetric positive definite matrix with smallest eigenvalue $\lambda_g$ and  largest eigenvalue $\lambda_G$. Applying (\ref{induction2}),  then take the sum on (\ref{fully:uh5}) for $n$ from $k-1$ to $m$ and drop some unnecessary terms:
\begin{align}
&\lambda_g\|\nabla{\bm{u}}_h^{m+1}\|_0^2+\nu\tau \frac{1-\mu_k^2}{2}\sum_{n=k-1}^{m}\|\mathcal{A}_{h}{\bm{u}}_h^{n+1}\|_0^2\nonumber\\[2mm]
%\leq &\sum_{i,j=1}^kg_{ij}(\nabla{\bm{u}}_h^{m+i-k}, \nabla{\bm{u}}_h^{m+j-k})+\nu\tau \frac{1-\mu_k^2}{2}\sum_{n=k-1}^{m}\|\mathcal{A}_{h}{\bm{u}}_h^{n+1}\|_0^2\nonumber\\
\leq&C\tau \sum_{n=k-1}^{m}\left(\|\nabla\gamma_k({\bm{u}}_h^n)\|_0^6+\|\nabla\gamma_k({\bm{u}}_h^n)\|_0^2\right)%\nonumber\\
+C\tau \sum_{n=k-1}^{m}\|\bm{f}^{n+1}\|_0^2+\lambda_G\sum_{i=0}^{k-1}\|\nabla{\bm{u}}_h^i\|_0^2\nonumber\\[2mm]
\leq&C\tau \sum_{n=k-1}^{m}\left(\|\nabla{\bm{u}}_h^n\|_0^6+\|\nabla{\bm{u}}_h^{n}\|_0^2\right)+CTC_f^2+M_0\leq CT(C_{\Diamond}^6+C_{\Diamond}^2+C_f^2)+M_0\leq C_*,\label{m0}
\end{align}
where $M_0$ is a constant dependent only on the initial data, and the  estimate $\|\bm{f}\|_0^2\leq C_f^2$ has been applied. 

For $k=6$, choosing  $\bm{v}_h=\mathcal{A}_{h}\hat{\bm{u}}_h^{n+1}\in\bm{X}_{0h}^{l}$ in (\ref{weak space discrete:eq1}),  where $\hat{\bm{u}}_h^{n+1}=\bm{u}_h^{n+1}-\frac{13}{9}\bm{u}_h^{n}+\frac{25}{36}\bm{u}_h^{n-1}-\frac{1}{9}\bm{u}_h^{n-2}$, and taking  $q_h=0$ in (\ref{weak space discrete:eq2}), then adding the two equations, there holds
\begin{align}
&(\alpha_6{\bm{u}}_h^{n+1}-\beta_6({\bm{u}}_h^{n}), \mathcal{A}_{h}\hat{\bm{u}}_h^{n+1})+\nu\tau(\mathcal{A}_{h}{\bm{u}}_h^{n+1}, \mathcal{A}_{h}\hat{\bm{u}}_h^{n+1})\nonumber\\[2mm]
&+\tau a(\gamma_6(\bm{u}_h^n),\gamma_6(\bm{u}_h^n), \mathcal{A}_{h}\hat{\bm{u}}_h^{n+1})=\tau(\bm{f}^{n+1}, \mathcal{A}_{h}\hat{\bm{u}}_h^{n+1}).\label{fully:uh0k6}
\end{align}
For the first term on the left-hand side of (\ref{fully:uh0k6}), we invoke Lemma \ref{lemma_symmetricmatrix1} to derive the following identity
\begin{align}
&(\alpha_6{\bm{u}}_h^{n+1}-\beta_6({\bm{u}}_h^{n}), \mathcal{A}_{h}\hat{\bm{u}}_h^{n+1})%\nonumber\\
=(\nabla(\alpha_6{\bm{u}}_h^{n+1}-\beta_6({\bm{u}}_h^{n})), \nabla\hat{\bm{u}}_h^{n+1})\nonumber\\[2mm]
\ge&\sum_{i,j=1}^6g_{ij}(\nabla{\bm{u}}_h^{n+i-5}, \nabla{\bm{u}}_h^{n+j-5})%\nonumber\\
-\sum_{i,j=1}^6g_{ij}(\nabla{\bm{u}}_h^{n+i-6}, \nabla{\bm{u}}_h^{n+j-6})\nonumber\\[2mm]
=& |\nabla \mathbb{U}_h^{n+1}|_{G}^2-| \nabla\mathbb{U}_h^{n}|_{G}^2.\label{fully:uh1k6}
\end{align}
Here the norm $|\nabla \mathbb{U}_h^{n+1}|_{G}^2$ is given by
\begin{align*}
| \nabla\mathbb{U}_h^{n+1}|_{G}^2=\sum_{i,j=1}^6g_{ij}(\nabla{\bm{u}}_h^{n+i-5},\nabla{\bm{u}}_h^{n+j-5}),
\end{align*}
for the notation $\mathbb{U}_h^{n+1}=(\bm{u}_h^{n-4}, \bm{u}_h^{n-3}, \bm{u}_h^{n-2}, \bm{u}_h^{n-1}, \bm{u}_h^{n}, \bm{u}_h^{n+1})^{\top}$.  Inserting (\ref{fully:uh1k6}) into (\ref{fully:uh0k6}) can be inferred  
\begin{align}
&|\nabla \mathbb{U}_h^{n+1}|_{G}^2-|\nabla  \mathbb{U}_h^{n}|_{G}^2+\nu\tau(\mathcal{A}_{h}{\bm{u}}_h^{n+1}, \mathcal{A}_{h}\hat{\bm{u}}_h^{n+1})\nonumber\\[2mm]
&+\tau a(\gamma_6(\bm{u}_h^n), \gamma_6(\bm{u}_h^n), \mathcal{A}_{h}\hat{\bm{u}}_h^{n+1})\leq\tau(\bm{f}^{n+1}, \mathcal{A}_{h}\hat{\bm{u}}_h^{n+1}).\label{fully:uh00k6}
\end{align}
%For the nonlinear term on the left-hand side of (\ref{eq:GSAV1}), we rewrite it as
 Similar to (\ref{fully:uh3a}), we have the following estimate
%For the  nonlinear convection term involving  $\gamma(\bm{u}_h^n)\cdot\nabla\gamma(\bm{u}_h^n)$, we obtain 
\begin{align}
&| a(\gamma_6(\bm{u}_h^n), \gamma_6(\bm{u}_h^n), \mathcal{A}_{h}\hat{\bm{u}}_h^{n+1})|\nonumber\\[2mm]
%\leq& C\|\nabla\gamma_6({\bm{u}}_h^n)\|_0\|\nabla\gamma_6({\bm{u}}_h^n)\|_0^{1/2}\|\gamma_6({\bm{u}}_h^n)\|_{2,h}^{1/2}\|\mathcal{A}_{h}\hat{\bm{u}}_h^{n+1}\|_0\nonumber\\
%\leq&C(\varepsilon\nu)^{-1}\|\nabla\gamma_6({\bm{u}}_h^n)\|_0^3\|\gamma_6({\bm{u}}_h^n)\|_{2,h}+\varepsilon\nu\|\mathcal{A}_{h}\hat{\bm{u}}_h^{n+1}\|_0^2\nonumber\\
\leq&C(\varepsilon\nu)^{-2}\|\nabla\gamma_6({\bm{u}}_h^n)\|_0^6+\varepsilon\nu\|\mathcal{A}_{h}\gamma_6({\bm{u}}_h^n)\|_{0}^2+\varepsilon\nu\|\mathcal{A}_{h}\hat{\bm{u}}_h^{n+1}\|_0^2.\label{fully:uh3ak6}
\end{align}
%where we apply the elliptic regularity estimate $\|\gamma({\bm{u}}_h^n)\|_{2,h}^2\leq \|\mathcal{A}_{h}(\gamma({\bm{u}}_h^n))\|_{0}^2$ in the last inequality above.  
By invoking Young's inequality, for the right-hand side of  (\ref{fully:uh00k6}), it implies
\begin{align}
&|(\bm{f}^{n+1}, \mathcal{A}_{h}\hat{\bm{u}}_h^{n+1})|\leq \varepsilon\nu\|\mathcal{A}_{h}\hat{\bm{u}}_h^{n+1}\|_0^2+C(\varepsilon\nu)^{-1}\|\bm{f}^{n+1}\|_0^2. \label{fully:uh4k6}
\end{align}
Now, combining (\ref{fully:uh3ak6})-(\ref{fully:uh4k6}) with (\ref{fully:uh00k6}),  and then taking the sum for $n$ from $5$ to $m$, we obtain
\begin{align}
%&\sum_{i,j=1}^6g_{ij}(\nabla{\bm{u}}_h^{n+i-5}, \nabla{\bm{u}}_h^{n+j-5})-\sum_{i,j=1}^6g_{ij}(\nabla{\bm{u}}_h^{n+i-6}, \nabla{\bm{u}}_h^{n+j-6})\nonumber\\
&|\nabla  \mathbb{U}_h^{m+1}|_{G}^2+\nu\tau\sum_{n=5}^{m}(\mathcal{A}_{h}{\bm{u}}_h^{n+1}, \mathcal{A}_{h}\hat{\bm{u}}_h^{n+1})%\nonumber\\
%\end{align}
%\begin{align}
\leq |\nabla  \mathbb{U}_h^{5}|_{G}^2+2\varepsilon\nu\tau\sum_{n=5}^{m}\|\mathcal{A}_{h}\hat{\bm{u}}_h^{n+1}\|_0^2\nonumber\\[2mm]
&+C(\varepsilon\nu)^{-2} \tau\sum_{n=5}^{m}\|\nabla\gamma_6({\bm{u}}_h^n)\|_0^6+\varepsilon\nu\tau\sum_{n=5}^{m}\|\mathcal{A}_{h}\gamma_6({\bm{u}}_h^n)\|_{0}^2+C(\varepsilon\nu)^{-1} \sum_{n=5}^{m}\tau\|\bm{f}^{n+1}\|_0^2.\label{fully:uh50k6}
\end{align}
In section 3.1 of  \cite{Akrivis2021sinum}, the authors were able to bound the sum expression on the left-hand side of (\ref{fully:uh00k6}) from below by recasting the sums into a weighted double sum of inner products where the weights stem from a Toeplitz matrix which turns out to be nonnegative thanks to relaxed positivity assumption $1-\frac{13}{9}\cos(x)+\frac{25}{36}\cos(2x)-\frac{1}{9}\cos(3x)>0$. For the velocity the resulting estimate reads
\begin{align}
\nu\sum_{n=5}^m(\mathcal{A}_{h}{\bm{u}}_h^{n+1}, \mathcal{A}_{h}\hat{\bm{u}}_h^{n+1})\ge& \nu\frac{1}{32}\sum_{n=5}^m\|\mathcal{A}_{h}{\bm{u}}_h^{n+1}\|_0^2%\nonumber\\
-\nu\left(\mathcal{A}_{h}{\bm{u}}_h^{6},  \frac{13}{9}\mathcal{A}_{h}{\bm{u}}_h^{5}-\frac{25}{36}\mathcal{A}_{h}{\bm{u}}_h^{4}+\frac{1}{9}\mathcal{A}_{h}{\bm{u}}_h^{3}\right)\nonumber\\[2mm]
&-\nu\left(\mathcal{A}_{h}{\bm{u}}_h^{7}, -\frac{25}{36}\mathcal{A}_{h}{\bm{u}}_h^{5}+\frac{1}{9}\mathcal{A}_{h}{\bm{u}}_h^{4}\right)-\nu\left(\mathcal{A}_{h}{\bm{u}}_h^{8}, \frac{1}{9}\mathcal{A}_{h}{\bm{u}}_h^{5}\right).\label{fully:uh3k3}
\end{align}
 By invoking Young's inequality, there holds
\begin{align}
&\left|\nu\left(\mathcal{A}_{h}{\bm{u}}_h^{6},  \frac{13}{9}\mathcal{A}_{h}{\bm{u}}_h^{5}-\frac{25}{36}\mathcal{A}_{h}{\bm{u}}_h^{4}+\frac{1}{9}\mathcal{A}_{h}{\bm{u}}_h^{3}\right)\right|\nonumber\\[2mm]
\leq & 3\varepsilon\nu\|\mathcal{A}_{h}{\bm{u}}_h^{6}\|_0^2+C(\varepsilon\nu)^{-1}\left[\|\mathcal{A}_{h}{\bm{u}}_h^{5}\|_0^2+\|\mathcal{A}_{h}{\bm{u}}_h^{4}\|_0^2+\|\mathcal{A}_{h}{\bm{u}}_h^{3}\|_0^2\right],  \label{fully:uh3k300}\\[2mm]
&\left|\nu\left(\mathcal{A}_{h}{\bm{u}}_h^{7}, -\frac{25}{36}\mathcal{A}_{h}{\bm{u}}_h^{5}+\frac{1}{9}\mathcal{A}_{h}{\bm{u}}_h^{4}\right)\right|\nonumber\\[2mm]
\leq& 2 \varepsilon\nu\|\mathcal{A}_{h}{\bm{u}}_h^{7}\|_0^2+C(\varepsilon\nu)^{-1}\left[\|\mathcal{A}_{h}{\bm{u}}_h^{5}\|_0^2+\|\mathcal{A}_{h}{\bm{u}}_h^{4}\|_0^2\right], \label{fully:uh3k301}\\[2mm]
&\left|\nu\left(\mathcal{A}_{h}{\bm{u}}_h^{8}, \frac{1}{9}\mathcal{A}_{h}{\bm{u}}_h^{5}\right)\right|\leq \varepsilon\nu\|\mathcal{A}_{h}{\bm{u}}_h^{8}\|_0^2+C(\varepsilon\nu)^{-1}\|\mathcal{A}_{h}{\bm{u}}_h^{5}\|_0^2.\label{fully:uh3k302}
\end{align}
Now, combining (\ref{fully:uh3k3})-(\ref{fully:uh3k302}) with (\ref{fully:uh50k6}), we obtain
\begin{align}
%&\sum_{i,j=1}^6g_{ij}(\nabla{\bm{u}}_h^{n+i-5}, \nabla{\bm{u}}_h^{n+j-5})-\sum_{i,j=1}^6g_{ij}(\nabla{\bm{u}}_h^{n+i-6}, \nabla{\bm{u}}_h^{n+j-6})\nonumber\\
&|\nabla  \mathbb{U}_h^{m+1}|_{G}^2+ \nu\tau\frac{1}{32}\sum_{n=5}^m\|\mathcal{A}_{h}{\bm{u}}_h^{n+1}\|_0^2\nonumber\\[2mm]
%\end{align}
%\begin{align}
\leq&|\nabla  \mathbb{U}_h^{5}|_{G}^2+2\varepsilon\nu\tau\sum_{n=5}^{m}\|\mathcal{A}_{h}\hat{\bm{u}}_h^{n+1}\|_0^2+\varepsilon\nu\tau\left[\|\mathcal{A}_{h}{\bm{u}}_h^{8}\|_0^2+2\|\mathcal{A}_{h}{\bm{u}}_h^{7}\|_0^2+3\|\mathcal{A}_{h}{\bm{u}}_h^{6}\|_0^2 \right]\nonumber\\[2mm]
&+C(\varepsilon\nu)^{-2} \tau\sum_{n=5}^{m}\|\nabla\gamma_6({\bm{u}}_h^n)\|_0^6+\varepsilon\nu\tau\sum_{n=5}^{m}\|\mathcal{A}_{h}\gamma_6({\bm{u}}_h^n)\|_{0}^2+C(\varepsilon\nu)^{-1} \sum_{n=5}^{m}\tau\|\bm{f}^{n+1}\|_0^2\nonumber\\[2mm]
&+3C(\varepsilon\nu)^{-1} \tau\|\mathcal{A}_{h}{\bm{u}}_h^{5}\|_0^2+2C(\varepsilon\nu)^{-1} \tau\|\mathcal{A}_{h}{\bm{u}}_h^{4}\|_0^2+C(\varepsilon\nu)^{-1} \tau\|\mathcal{A}_{h}{\bm{u}}_h^{3}\|_0^2.\label{fully:uh5k6}
\end{align}
We can choose $\varepsilon$  small enough, such that
%\begin{align*}
$\frac{1}{64}>9\varepsilon$.
%\end{align*}
Noting that $G=(g_{ij})$  is a symmetric positive definite matrix with  smallest eigenvalue $\lambda_{1g}$  and largest eigenvalue $\lambda_{1G}$,  there holds %Then take the sum on (\ref{fully:uh5k6}) for $n$ from $5$ to $m$ and drop some unnecessary terms:
\begin{align*}
&\lambda_{1g}\|\nabla{\bm{u}}_h^{m+1}\|_0^2+\nu\tau\frac{1}{64}\sum_{n=5}^{m}\|\mathcal{A}_{h}{\bm{u}}_h^{n+1}\|_0^2\nonumber\\[2mm]
%\leq &\sum_{i,j=1}^6g_{ij}(\nabla{\bm{u}}_h^{m+i-6}, \nabla{\bm{u}}_h^{m+j-6})+\nu\tau\frac{1}{64}\sum_{n=5}^{m}\|\mathcal{A}_{h}{\bm{u}}_h^{n+1}\|_0^2\nonumber\\
\leq&C\tau\sum_{n=5}^{m}\left(\|\nabla\gamma_6({\bm{u}}_h^n)\|_0^6+\|\nabla\gamma_6({\bm{u}}_h^n)\|_0^2\right)%\nonumber\\
+C\tau\sum_{n=5}^{m}\|\bm{f}^{n+1}\|_0^2+C\tau\sum_{i=3}^{5}\|\mathcal{A}_{h}{\bm{u}}_h^i\|_0^2+\lambda_{1G}\sum_{i=0}^5\|\nabla \bm{u}_h^i \|_0^2\\[2mm]
\leq&C\tau\sum_{n=5}^{m}\left(\|\nabla{\bm{u}}_h^n\|_0^6+\|\nabla{\bm{u}}_h^{n}\|_0^2\right)+CTC_f^2+M_0,
\end{align*}
where $M_0$ is a constant dependent only on the initial data $\bm{u}_h^0, \bm{u}_h^1, \bm{u}_h^2, \bm{u}_h^3, \bm{u}_h^4, \bm{u}_h^5$ and we used $\|\bm{f}\|_0^2\leq C_f^2$.
%Now, if we define $\Phi$  as $\Phi(x)=x^3+x$ in Lemma \ref{lemma:con-in} and let
%$$0<T_*<\int_{CTC_f^2+M_0}^{\infty}dz/\Phi(z),$$
Under the induction assumption in (\ref{induction2}), there exists $C_*>0$ independent of $\tau$ such that
\begin{align}
&\lambda_{1g}\|\nabla{\bm{u}}_h^{m+1}\|_0^2+\tau\frac{1}{64}\sum_{n=5}^{m}\nu\|\mathcal{A}_{h}{\bm{u}}_h^{n+1}\|_0^2\leq  CT(C_{\diamond}^6+C_{\diamond}^2+C_f^2) +M_0 \leq C_*.\label{m06}
\end{align}
%This completes the proof of  (\ref{lemmadeltah}).\\

%Now, if we define $\Phi$  as $\Phi(x)=x^3+x$ in Lemma \ref{lemma:con-in} and let
%$$0<T_*<\int_{CTC_f^2+M_0}^{\infty}dz/\Phi(z),$$
%then Lemma \ref{lemma:con-in} implies that there exists $C_*>0$ independent of $\tau $ such that
%\begin{align*}
%&\lambda_G\|\nabla{\bm{u}}_h^m\|_0^2+\tau \frac{1-\mu_k^2}{2}\sum_{n=k}^{m}\nu\|\mathcal{A}_{h}{\bm{u}}_h^n\|_0^2\leq C_*.
%\end{align*}
%This completes the proof of   (\ref{lemmadeltah}).
%\end{proof}

%Building upon  Hypothesis \ref{assum:solution2}, we are ready to show the main convergence result for the BDF-$k$ in full discretization.
%\begin{theorem}\label{space discrete : theorem 1}
%Suppose that the continuous system (\ref{weak:eq1})-(\ref{weak:eq2}) has a unique solution  $(\bm{u}^n, p^n)$ satisfying  %Hypothesis \ref{assum:right1} and 
%Hypothesis \ref{assum:solution2}. Then there exist a positive constant $\tau _0$ such that when $\tau \leq \tau _0$,  the finite element problem (\ref{weak space discrete:eq1})-(\ref{weak space discrete:eq2}) admits a unique solution  $(\bm{u}_h^n,p_h^n)$, with  $k=1,\cdots,5$,  which satisfies, for all $m+1\leq N$
%\begin{equation*}
%\begin{split}
%\Vert\bm{u}^{m+1}-\bm{u}_h^{m+1}\Vert_0%+\tau \frac{1-\mu_k^2}{2}\sum_{n=1}^m\nu\Vert\nabla(\bm{u}^n-\bm{u}_h^n)\Vert_0^2
%\leq  C_u\left(h^{l+1}+(\tau )^{k}\right),
%\end{split}
%\end{equation*}
%where the constant $C_u$ depends on $T,\nu,\Omega$  and the exact solution $\bm{u}$, but is independent
%of $\tau $ and $h$.
%\end{theorem}
%\begin{proof}
\textbf{Step II}: For $1\leq k\leq 5$, from equation (\ref{eq:erroreq2}),  it can be deduced that
 \begin{align}
&\tau (q_h,\nabla\cdot({\eta}_{\bm{u}}^{n+1}-\mu_k{\eta}_{\bm{u}}^{n}))=0. \label{eq:erroreq02}
\end{align}
 Choosing $\bm{v}_h=\tilde{\eta}_{\bm{u}}^{n+1}$ in (\ref{eq:erroreq1}) with $\tilde{\eta}_{\bm{u}}^{n+1}={\eta}_{\bm{u}}^{n+1}-\mu_k{\eta}_{\bm{u}}^{n}$,  $q_h={\eta}_{p}^{n+1}$ in (\ref{eq:erroreq02}), and adding up these two  equations yields the following
\begin{align}
&(\alpha_k\eta_{\bm{u}}^{n+1}-\beta_k(\eta_{\bm{u}}^{n}),\tilde{\eta}_{\bm{u}}^{n+1})+\nu\tau (\nabla\eta_{\bm{u}}^{n+1},\nabla\tilde{\eta}_{\bm{u}}^{n+1})%\nonumber\\
=(\tau  E_{k,1}+E_{k,2}+\tau E_{k,3},\tilde{\eta}_{\bm{u}}^{n+1}). \label{fully:esti0}
\end{align}
For the left-hand side of (\ref{fully:esti0}), with the help of Lemma \ref{lemma_symmetricmatrix}, it can be concluded that 
\begin{align}
&(\alpha_k\eta_{\bm{u}}^{n+1}-\beta_k(\eta_{\bm{u}}^{n}), \tilde{\eta}_{\bm{u}}^{n+1})%\nonumber\\
=(\alpha_k\eta_{\bm{u}}^{n+1}-\beta_k(\eta_{\bm{u}}^{n}), \eta_{\bm{u}}^{n+1}-\mu_k\eta_{\bm{u}}^{n})\nonumber\\[2mm]
=&\sum_{i,j=1}^kg_{ij}(\eta_{\bm{u}}^{n+1+i-k}, \eta_{\bm{u}}^{n+1+j-k})-\sum_{i,j=1}^kg_{ij}(\eta_{\bm{u}}^{n+i-k}, \eta_{\bm{u}}^{n+j-k})%\nonumber\\
+\left\|\sum_{i=0}^k\delta_{i}\eta_{\bm{u}}^{n+1+i-k}\right\|_0^2.\label{fully:esti1}
\end{align}
%and
%\begin{align}
%&(-\nu\Delta\bar{e}_{\bm{u}}^{n+1}, \bar{e}_{\bm{u}}^{n+1}-\mu_k\bar{e}_{\bm{u}}^{n})
%=\nu\|\nabla\bar{e}_{\bm{u}}^{n+1}\|_0^2-\nu(\nabla\bar{e}_{\bm{u}}^{n+1}, \mu_k\nabla\bar{e}_{\bm{u}}^{n}).\label{fully:esti2}
%\end{align}
By inserting (\ref{fully:esti1}) into  (\ref{fully:esti0}), we obtain
\begin{align}
&\sum_{i,j=1}^kg_{ij}(\eta_{\bm{u}}^{n+1+i-k}, \eta_{\bm{u}}^{n+1+j-k})-\sum_{i,j=1}^kg_{ij}(\eta_{\bm{u}}^{n+i-k}, \eta_{\bm{u}}^{n+j-k})%\nonumber\\
+\left\|\sum_{i=0}^k\delta_{i}\eta_{\bm{u}}^{n+1+i-k}\right\|_0^2+\nu\tau \|\nabla\eta_{\bm{u}}^{n+1}\|_0^2\nonumber\\[2mm]
=&\nu\tau (\nabla\eta_{\bm{u}}^{n+1}, \mu_k\nabla\eta_{\bm{u}}^{n})+(\tau  E_{k,1}+E_{k,2}+\tau E_{k,3},\tilde{\eta}_{\bm{u}}^{n+1}). \label{fully:esti00}
\end{align}
We bound the terms on the right-hand side of (\ref{fully:esti00}) using (\ref{bestimate}). Then 
\begin{align*}
&\left|a(-\bm{u}^{n+1}+\gamma_k(\bm{u}^n)-\gamma_k(\bm{u}^n-\bm{u}_h^n),\bm{u}^{n+1}, \eta_{\bm{u}}^{n+1}-\mu_k\eta_{\bm{u}}^{n})\right|\\[2mm]
\leq&(\|\bm{u}^{n+1}-\gamma_k(\bm{u}^n)\|_{0}+\| \gamma_k(\bm{u}^n-\bm{u}_h^n)\|_{0})\|\bm{u}^{n+1}\|_{2,2}\|\eta_{\bm{u}}^{n+1}-\mu_k\eta_{\bm{u}}^{n}\|_{1,2}\\[2mm]
\leq&C(\varepsilon\nu)^{-1}\|\bm{u}^{n+1}\|_{2,2}^2 \left\|\sum_{i=1}^ko_i\int_{t^{n+1-i}}^{t^{n+1}}(s-t^{n+1-i})^{k-1}\frac{\partial^{k} \bm{u}}{\partial t^{k}}(s)\,ds\right\|_{0}^2\\[2mm]
&+2\varepsilon\nu\|\nabla\eta_{\bm{u}}^{n+1}\|_0^2+2\mu_k^2\varepsilon\nu\|\nabla\eta_{\bm{u}}^{n}\|_0^2+C(\varepsilon\nu)^{-1}(\| \gamma_k(\eta_{\bm{u}}^n)\|_0^2+\| \gamma_k(e_{\bm{u}}^n)\|_{0}^2)\|\bm{u}^{n+1}\|_{2,2}^2\\[2mm]
\leq&C(\varepsilon\nu)^{-1}\|\bm{u}^{n+1}\|_{2,2}^2 \tau^{2k-1} \sum_{i=1}^ko_i\int_{t^{n+1-i}}^{t^{n+1}}\left\|\frac{\partial^{k} \bm{u}}{\partial t^{k}}(s)\right\|_0^2\,ds\\[2mm]
&+2\varepsilon\nu\|\nabla\eta_{\bm{u}}^{n+1}\|_0^2+2\mu_k^2\varepsilon\nu\|\nabla\eta_{\bm{u}}^{n}\|_0^2+C(\varepsilon\nu)^{-1}\left(\| \gamma_k(\eta_{\bm{u}}^n)\|_0^2+\| \gamma_k(e_{\bm{u}}^n)\|_{0}^2\right)\|\bm{u}^{n+1}\|_{2,2}^2,
\end{align*}
where $o_i$ are some fixed and bounded constants determined by the truncation error.
 As an illustration, in the case $k=4$, we have
\begin{align*}
\bm{u}^{n+1}-\gamma_4(\bm{u}^n)
%=4(\bm{u}^{n+1}-\bm{u}^n)-6(\bm{u}^{n+1}-\bm{u}^{n-1})+4(\bm{u}^{n+1}-\bm{u}^{n-2})-(\bm{u}^{n+1}-\bm{u}^{n-3})\\
=&-\frac{2}{3}\int_{t^{n}}^{t^{n+1}}(s-t^{n})^{3}\frac{\partial^{4} \bm{u}}{\partial t^{4}}(s)\,ds+\int_{t^{n-1}}^{t^{n+1}}(s-t^{n-1})^{3}\frac{\partial^{4} \bm{u}}{\partial t^{4}}(s)\,ds\\[2mm]
&-\frac{2}{3}\int_{t^{n-2}}^{t^{n+1}}(s-t^{n-2})^{3}\frac{\partial^{4} \bm{u}}{\partial t^{4}}(s)\,ds+\frac{1}{6}\int_{t^{n-3}}^{t^{n+1}}(s-t^{n-3})^{3}\frac{\partial^{4} \bm{u}}{\partial t^{4}}(s)\,ds. %\label{un1andgammaunk4}
\end{align*}
Similarly, we can derive that
\begin{align*}
&\left|a(\gamma_k(\bm{u}_h^{n}),\bm{u}^{n+1}-\gamma_k(\bm{u}^n), \eta_{\bm{u}}^{n+1}-\mu_k\eta_{\bm{u}}^{n})\right|\\[2mm]
\leq&\|\gamma_k(\bm{u}_h^{n})\|_{1,2}\|\bm{u}^{n+1}-\gamma_k(\bm{u}^n)\|_{1,2}\|\eta_{\bm{u}}^{n+1}-\mu_k\eta_{\bm{u}}^{n}\|_{1,2}\\[2mm]
%\leq& C\|\gamma_k(\bm{u}_h^{n})\|_{1,2}^2\left\|\sum_{i=1}^kb_i\int_{t^{n+1-i}}^{t^{n+1}}(t^{n+1-i}-s)^{k-1}\frac{\partial^{k} \bm{u}}{\partial t^{k}}(s)\,ds\right\|_{1,2}^2
%+\varepsilon\nu\left(\|\nabla\eta_{\bm{u}}^{n+1}\|_0^2+\mu_k^2\|\nabla\eta_{\bm{u}}^{n}\|_0^2\right)\\
\leq& C\tau^{2k-1}(\varepsilon\nu)^{-1} \|\gamma_k(\bm{u}_h^{n})\|_{1,2}^2 \sum_{i=1}^ko_i \int_{t^{n+1-i}}^{t^{n+1}}\left\|\frac{\partial^{k} \bm{u}}{\partial t^{k}}(s)\right\|_{1,2}^2\,ds
+\varepsilon\nu\|\nabla\eta_{\bm{u}}^{n+1}\|_0^2+\mu_k^2\varepsilon\nu\|\nabla\eta_{\bm{u}}^{n}\|_0^2,
\end{align*}
and
\begin{align*}
&\left|a(\gamma_k(\bm{u}_h^{n}),\gamma_k(\bm{u}^n-\bm{u}_h^{n}), \eta_{\bm{u}}^{n+1}-\mu_k\eta_{\bm{u}}^{n})\right|\\[2mm]
\leq&\|\mathcal{A}_{h}(\gamma_k(\bm{u}_h^{n}))\|_{0}\|\gamma_k(\eta_{\bm{u}}^{n})\|_0\|\eta_{\bm{u}}^{n+1}-\mu_k\eta_{\bm{u}}^{n}\|_{1,2}
+\|\mathcal{A}_{h}(\gamma_k(\bm{u}_h^{n}))\|_{0}\|\gamma_k(e_{\bm{u}}^{n})\|_0\|\eta_{\bm{u}}^{n+1}-\mu_k\eta_{\bm{u}}^{n}\|_{1,2}\\[2mm]
\leq&C(\varepsilon\nu)^{-1}\left(\|\mathcal{A}_{h}(\gamma_k(\bm{u}_h^{n}))\|_{0}^2\|\gamma_k(\eta_{\bm{u}}^n)\|_0^2+\|\mathcal{A}_{h}(\gamma_k(\bm{u}_h^{n}))\|_{0}^2\|\gamma_k(e_{\bm{u}}^{n})\|_0^2\right)
+\varepsilon\nu\|\nabla\eta_{\bm{u}}^{n+1}\|_0^2+\mu_k^2\varepsilon\nu\|\nabla\eta_{\bm{u}}^{n}\|_0^2.
\end{align*}
Combining these estimates, we can conclude that
\begin{align}
\left|(E_{k,1}, \tilde{\eta}_{\bm{u}}^{n+1})\right|\leq&4\varepsilon\nu\|\nabla\eta_{\bm{u}}^{n+1}\|_0^2+4\mu_k^2\varepsilon\nu\|\nabla\eta_{\bm{u}}^{n}\|_0^2\nonumber\\[2mm]
&+ C(\varepsilon\nu)^{-1} \tau^{2k-1}\|\bm{u}^{n+1}\|_{2,2}^2\sum_{i=1}^ko_i \int_{t^{n+1-i}}^{t^{n+1}}\left\|\frac{\partial^{k} \bm{u}}{\partial t^{k}}(s)\right\|_{0}^2\,ds\nonumber\\[2mm]
&+ C(\varepsilon\nu)^{-1} \tau^{2k-1}\|\gamma_k(\bm{u}_h^{n})\|_{1,2}^2\sum_{i=1}^ko_i \int_{t^{n+1-i}}^{t^{n+1}}\left\|\frac{\partial^{k} \bm{u}}{\partial t^{k}}(s)\right\|_{1,2}^2\,ds\nonumber\\[2mm]
&+C(\varepsilon\nu)^{-1}(\|\mathcal{A}_{h}(\gamma_k(\bm{u}_h^{n}))\|_{0}^2+\|\bm{u}^{n+1}\|_{2,2}^2)\|\gamma_k(\eta_{\bm{u}}^n)\|_0^2\nonumber\\[2mm]
&+C(\varepsilon\nu)^{-1}(\|\mathcal{A}_{h}(\gamma_k(\bm{u}_h^{n}))\|_{0}^2+\|\bm{u}^{n+1}\|_{2,2}^2)\|\gamma_k(e_{\bm{u}}^{n})\|_0^2.\label{fully:esti4}
\end{align}
We continue to estimate
\begin{align}
\left|(E_{k,2}, \tilde{\eta}_{\bm{u}}^{n+1})\right|\leq &\varepsilon\nu\tau \|\nabla\eta_{\bm{u}}^{n+1}\|_0^2+\mu_k^2\varepsilon\nu\tau \|\nabla\eta_{\bm{u}}^{n}\|_0^2\nonumber\\[2mm]
 &+C(\varepsilon\nu)^{-1}\tau^{-1}\tau^{2k+1} \sum_{i=1}^ko_i \int_{t^{n+1-i}}^{t^{n+1}}\left\|\frac{\partial^{k+1} \mathcal{P}_h\bm{u}}{\partial t^{k+1}}(s)\right\|_0^2\,ds,\label{fully:esti5}
 \end{align}
and
\begin{align}
&\left|(E_{k,3}, \tilde{\eta}_{\bm{u}}^{n+1})\right|\leq C(\varepsilon\nu)^{-1}h^{2(l+1)}\tau \int_{t^{n+1-k}}^{t^{n+1}} \left( \left\|\frac{\partial\bm u}{\partial t}(s)\right\|_{l+1,2}^2+ \left\|\frac{\partial p}{\partial t}(s)\right\|_{l,2}^2\right)\,ds+\varepsilon\nu\|\nabla\eta_{\bm{u}}^{n+1}\|_0^2+\mu_k^2\varepsilon\nu\|\nabla\eta_{\bm{u}}^{n}\|_0^2.\label{fully:esti6}
\end{align}
By applying Young's inequality, we can easily derive the following 
\begin{align}
&\left|\nu(\nabla\eta_{\bm{u}}^{n+1}, \mu_k\nabla\eta_{\bm{u}}^{n})\right|\leq \frac{1}{2}\nu\|\nabla\eta_{\bm{u}}^{n+1}\|_0^2+\frac{1}{2}\mu_k^2\nu\|\nabla\eta_{\bm{u}}^{n}\|_0^2. \label{fully:esti9}
\end{align}
Now, combining (\ref{fully:esti4})-(\ref{fully:esti9}) with (\ref{fully:esti00}), we derive
\begin{align}
&\sum_{i,j=1}^kg_{ij}(\eta_{\bm{u}}^{n+1+i-k}, \eta_{\bm{u}}^{n+1+j-k})-\sum_{i,j=1}^kg_{ij}(\eta_{\bm{u}}^{n+i-k}, \eta_{\bm{u}}^{n+j-k})%\nonumber\\[2mm]
+\left\|\sum_{i=0}^k\delta_{i}\eta_{\bm{u}}^{n+1+i-k}\right\|_0^2+\nu\tau ({1}/{2}-6\varepsilon)\|\nabla\eta_{\bm{u}}^{n+1}\|_0^2\nonumber\\[2mm]
%\end{align}
%\begin{align}
\leq&\nu\tau (1/2+6\varepsilon)\mu_k^2\|\nabla\eta_{\bm{u}}^{n}\|_0^2+ C\tau^{2k}\|\bm{u}^{n+1}\|_{2,2}^2\sum_{i=1}^ko_i \int_{t^{n+1-i}}^{t^{n+1}}\left\|\frac{\partial^{k} \bm{u}}{\partial t^{k}}(s)\right\|_0^2\,ds\nonumber\\[2mm]
&+C\tau \left(\|\mathcal{A}_{h}(\gamma_k(\bm{u}_h^n))\|_{0}^2+\|\bm{u}^{n+1}\|_{2,2}^2\right)\left(\|\gamma_k(\eta_{\bm{u}}^n)\|_0^2+\|\gamma_k(e_{\bm{u}}^n)\|_0^2\right)\nonumber\\[2mm]
%&+C\tau (\|\mathcal{A}_{h}(\gamma_k(\bm{u}_h^n))\|_{0}^2+\|\bm{u}^{n+1}\|_{2,2}^2)\|\gamma_k(e_{\bm{u}}^n)\|_0^2\nonumber\\
%&+ C(\tau )^{2k}\|\bm{u}^{n+1}\|_{2,2}^2\int_{t^{n+1-i}}^{t^{n+1}}\|\frac{\partial^{k} \bm{u}}{\partial t^{k}}(s)\|_0^2\,ds\nonumber\\
&+ C\tau^{2k}\|\gamma_k(\bm{u}_h^{n})\|_{1,2}^2\sum_{i=1}^ko_i \int_{t^{n+1-i}}^{t^{n+1}}\left\|\frac{\partial^{k} \bm{u}}{\partial t^{k}}(s)\right\|_{1,2}^2\,ds\nonumber\\[2mm]
%&+ C(\tau )^{2k}\Big(\int_{t^{n+1-k}}^{t^{n+1}}\|\frac{\partial^{k} \bm{u}}{\partial t^{k}}(s)\|_{1,2}^2\,ds\Big)\nonumber\\
&+C\tau^{2k}\sum_{i=1}^ko_i\int_{t^{n+1-i}}^{t^{n+1}}\left\|\frac{\partial^{k+1} \bm{u}}{\partial t^{k+1}}(s)\right\|_0^2\,ds+Ch^{2(l+1)}\tau^2 \int_{t^{n+1-k}}^{t^{n+1}} \left( \left\|\frac{\partial\bm u}{\partial t}(s)\right\|_{l+1,2}^2+ \left\|\frac{\partial p}{\partial t}(s)\right\|_{l,2}^2\right)\,ds. \label{fully:esti11}
\end{align}
Next, with $0\leq \mu_k<1$, we can choose $\varepsilon$ small enough such that 
$\frac{1}{2}-6\varepsilon>(\frac{1}{2}+6\varepsilon)\mu_k^2.$
Summing both sides of inequality (\ref{fully:esti11}) for $n$ from $k-1$ to $m$, and noting  that $G=(g_{ij})$  is a symmetric positive definite matrix with smallest eigenvalue $\lambda_{2g}$ and  largest eigenvalue $\lambda_{2G}$,  we then obtain, after dropping some unnecessary terms
\begin{align}
&\lambda_{2g}\|\eta_{\bm{u}}^{m+1}\|_0^2+\tau \frac{1-\mu_k^2}{2}\sum_{n=k-1}^{m}\nu\|\nabla\eta_{\bm{u}}^{n+1}\|_0^2\nonumber\\[2mm]
%\leq &\sum_{i,j=1}^kg_{ij}(\eta_{\bm{u}}^{m+1+i-k}, \eta_{\bm{u}}^{m+1+j-k})+\tau \frac{1-\mu_k^2}{2}\sum_{n=0}^{m+1}\nu\|\nabla\eta_{\bm{u}}^{n}\|_0^2\nonumber\\
\leq&C\tau \sum_{n=k-1}^m\left(\|\mathcal{A}_{h}(\gamma_k(\bm{u}_h^n))\|_{0}^2+\|\bm{u}^{n+1}\|_{2,2}^2\right)\|\gamma_k(\eta_{\bm{u}}^n)\|_0^2\nonumber\\[2mm]
&+C\tau \sum_{n=k-1}^m\left(\|\mathcal{A}_{h}(\gamma_k(\bm{u}_h^n))\|_{0}^2+\|\bm{u}^{n+1}\|_{2,2}^2\right)\|\gamma_k(e_{\bm{u}}^n)\|_0^2\nonumber\\[2mm]
&+ C\tau^{2k}\int_{0}^{T}\left(\left\|\frac{\partial^{k} \bm{u}}{\partial t^{k}}(s)\right\|_0^2+\left\|\frac{\partial^{k} \bm{u}}{\partial t^{k}}(s)\right\|_{1,2}^2+\left\|\frac{\partial^{k+1} \bm{u}}{\partial t^{k+1}}(s)\right\|_0^2\right)\,ds\nonumber\\[2mm]
&+Ch^{2(l+1)}\tau^2  \int_{0}^{T} \left( \left\|\frac{\partial\bm u}{\partial t}(s)\right\|_{l+1,2}^2+ \left\|\frac{\partial p}{\partial t}(s)\right\|_{l,2}^2\right)\,ds+\lambda_{2G}\sum_{i=0}^{k-1}\|\eta_{\bm{u}}^{i}\|_0^2.\label{fully:esti12}
\end{align}
By invoking (\ref{m0}) and the assumptions in  Hypothesis \ref{assum:solution2} imposed on the exact solution, we derive the following
\begin{align*}
&\|\nabla\bm{u}_h^m\|_0\leq C_*,\,\, \sum_{n=k-1}^{m}\nu \tau \|\mathcal{A}_{h}\bm{u}_h^{n}\|_0^2\leq C_*, \,\, \|\bm{u}^{n+1}\|_{2,2}^2\leq C_1.
\end{align*}
Applying Hypothesis \ref{initial value estimate} and Gr$\ddot{\text{o}}$nwall Lemma \ref{def:gronwall2} to (\ref{fully:esti12}) then yields
\begin{align}
&\|\eta_{\bm{u}}^{m+1}\|_0^2+\tau \frac{1-\mu_k^2}{2}\sum_{n=k-1}^{m}\nu\|\nabla\eta_{\bm{u}}^{n+1}\|_0^2\nonumber\\[2mm]
%\leq&C\tau \sum_{n=1}^md_n\Big(\|\nabla \hat{e}_{\bm{u}}^n\|_0^2+\|\nabla \bar{e}_{\bm{u}}^n\|_0^2+\|\nabla \bar{e}_{\bm{u}}^{n-1}\|_0^2+\|\nabla\tilde{e}_{\bm{u}}^{n}\|_0^2\Big)\\
%&+ CT(\tau )^4\\
\leq& C\exp\left(CC_*+CC_1\right)\left(T\tau^{2k}+h^{2(l+1)}\right)\leq C_u\left(\tau^{2k}+h^{2(l+1)}\right). \label{fully:esti13}
\end{align}

 For $k=6$, from equation (\ref{eq:erroreq2}),  it can be deduced that
 \begin{align}
&\tau\left(q_h,\nabla\cdot\left({\eta}_{\bm{u}}^{n+1}-\frac{13}{9}\eta_{\bm{u}}^{n}+\frac{25}{36}\eta_{\bm{u}}^{n-1}-\frac{1}{9}\eta_{\bm{u}}^{n-2}\right)\right)=0. \label{eq:erroreq02k6}
\end{align}
 Choosing $\bm{v}_h=\hat{\eta}_{\bm{u}}^{n+1}$ in (\ref{eq:erroreq1}) with $\hat{\eta}_{\bm{u}}^{n+1}={\eta}_{\bm{u}}^{n+1}-\frac{13}{9}\eta_{\bm{u}}^{n}+\frac{25}{36}\eta_{\bm{u}}^{n-1}-\frac{1}{9}\eta_{\bm{u}}^{n-2}$,  $q_h={\eta}_{p}^{n+1}$ in (\ref{eq:erroreq02k6}), and adding up these two  equations yields the following
\begin{align}
&(\alpha_6\eta_{\bm{u}}^{n+1}-\beta_6(\eta_{\bm{u}}^{n}),\hat{\eta}_{\bm{u}}^{n+1})+\nu\tau(\nabla\eta_{\bm{u}}^{n+1},\nabla\hat{\eta}_{\bm{u}}^{n+1})%\nonumber\\
=(\tau E_{6,1}+E_{6,2}+\tau E_{6,3},\hat{\eta}_{\bm{u}}^{n+1}). \label{fully:esti0k6}
\end{align}
For the left-hand side of (\ref{fully:esti0k6}), with the help of Lemma \ref{lemma_symmetricmatrix1}, it can be concluded that 
\begin{align}
(\alpha_6\eta_{\bm{u}}^{n+1}-\beta_6(\eta_{\bm{u}}^{n}), \hat{\eta}_{\bm{u}}^{n+1})
\ge&\sum_{i,j=1}^6g_{ij}(\eta_{\bm{u}}^{n+i-5}, \eta_{\bm{u}}^{n+j-5})
-\sum_{i,j=1}^6g_{ij}(\eta_{\bm{u}}^{n+i-6}, \eta_{\bm{u}}^{n+j-6})\nonumber\\[2mm]
=& | \mathbb{U}_{\eta}^{n+1}|_{G}^2-| \mathbb{U}_{\eta}^{n}|_{G}^2.\label{fully:esti1k6}
\end{align}
%and
%\begin{align}
%&(-\nu\Delta\bar{e}_{\bm{u}}^{n+1}, \bar{e}_{\bm{u}}^{n+1}-\mu_k\bar{e}_{\bm{u}}^{n})
%=\nu\|\nabla\bar{e}_{\bm{u}}^{n+1}\|_0^2-\nu(\nabla\bar{e}_{\bm{u}}^{n+1}, \mu_k\nabla\bar{e}_{\bm{u}}^{n}).\label{fully:esti2}
%\end{align}
Here the norm $| \mathbb{U}_{\eta}^{n+1}|_{G}^2$ is given by
\begin{align*}
 | \mathbb{U}_{\eta}^{n+1}|_{G}^2=\sum_{i,j=1}^6g_{ij}(\eta_{\bm{u}}^{n+i-5},\eta_{\bm{u}}^{n+j-5}),
\end{align*}
for the notation $\mathbb{U}_{\eta}^{n+1}=(\eta_{\bm{u}}^{n-4},\eta_{\bm{u}}^{n-3}, \eta_{\bm{u}}^{n-2}, \eta_{\bm{u}}^{n-1}, \eta_{\bm{u}}^{n},\eta_{\bm{u}}^{n+1})^{\top}$.  By inserting (\ref{fully:esti1k6}) into  (\ref{fully:esti0k6}), we obtain
\begin{align}
& | \mathbb{U}_{\eta}^{n+1}|_{G}^2-| \mathbb{U}_{\eta}^{n}|_{G}^2+\nu\tau(\nabla\eta_{\bm{u}}^{n+1},\nabla\hat{\eta}_{\bm{u}}^{n+1})%\nonumber\\
\leq (\tau E_{6,1}+E_{6,2}+\tau E_{6,3},\hat{\eta}_{\bm{u}}^{n+1}). \label{fully:esti00k6}
\end{align}
We bound the terms on the right-hand side of (\ref{fully:esti00k6}) using (\ref{bestimate}).  % and (\ref{un1andgammaun}), 
Then
\begin{align*}
&\left|a(-\bm{u}^{n+1}+\gamma_6(\bm{u}^n)-\gamma_6(\bm{u}^n-\bm{u}_h^n),\bm{u}^{n+1}, \hat{\eta}_{\bm{u}}^{n+1})\right|\\[2mm]
\leq&\left(\|\bm{u}^{n+1}-\gamma_6(\bm{u}^n)\|_{0}+\| \gamma_6(\bm{u}^n-\bm{u}_h^n)\|_{0}\right)\|\bm{u}^{n+1}\|_{2,2}\|\hat{\eta}_{\bm{u}}^{n+1}\|_{1,2}\\[2mm]
\leq&C(\varepsilon\nu)^{-1}\tau^{11}\|\bm{u}^{n+1}\|_{2,2}^2 \sum_{i=1}^6 \int_{t^{n+1-i}}^{t^{n+1}}\left\|\frac{\partial^{6} \bm{u}}{\partial t^{6}}(s)\right\|_0^2\,ds+C(\varepsilon\nu)^{-1}\left(\| \gamma_6(\eta_{\bm{u}}^n)\|_0^2+\| \gamma_6(e_{\bm{u}}^n)\|_{0}^2\right)\|\bm{u}^{n+1}\|_{2,2}^2\\[2mm]
&+\varepsilon\nu\|\nabla\eta_{\bm{u}}^{n+1}\|_0^2+\varepsilon\nu\left[\|\nabla\eta_{\bm{u}}^{n}\|_0^2+\|\nabla\eta_{\bm{u}}^{n-1}\|_0^2+\|\nabla\eta_{\bm{u}}^{n-2}\|_0^2\right],
\end{align*}
%where $b_i$ are some fixed and bounded constants determined by the truncation error.
 here, we have used the following identity
%\begin{align*}
%&\bm{u}^{n+1}-\gamma_k(\bm{u}^n)
%=3(\bm{u}^{n+1}-\bm{u}^n)-3(\bm{u}^{n+1}-\bm{u}^{n-1})+(\bm{u}^{n+1}-\bm{u}^{n-2})\\
%=&\frac{3}{2}\int_{t^{n}}^{t^{n+1}}(t^{n}-s)^{2}\frac{\partial^{3} \bm{u}}{\partial t^{3}}(s)\,ds-\frac{3}{2}\int_{t^{n-1}}^{t^{n+1}}(t^{n-1}-s)^{2}\frac{\partial^{3} \bm{u}}{\partial t^{3}}(s)\,ds\\
%&+\frac{1}{2}\int_{t^{n-2}}^{t^{n+1}}(t^{n-2}-s)^{2}\frac{\partial^{3} \bm{u}}{\partial t^{3}}(s)\,ds.
%\end{align*}
%To facilitate the subsequent error analysis, we derive the following high-order temporal identity, which will be repeatedly employed throughout the analysis and plays a crucial role in the derivation of the error estimates:
%for the case $k=6$, there holds
\begin{align}
&\bm{u}^{n+1}-\gamma_6(\bm{u}^n)\nonumber\\[2mm]
%=&6(\bm{u}^{n+1}-\bm{u}^n)-15(\bm{u}^{n+1}-\bm{u}^{n-1})+20(\bm{u}^{n+1}-\bm{u}^{n-2})\nonumber\\
%&-15(\bm{u}^{n+1}-\bm{u}^{n-3})+6(\bm{u}^{n+1}-\bm{u}^{n-4})-(\bm{u}^{n+1}-\bm{u}^{n-5})\nonumber\\
=&-\frac{1}{20}\int_{t^{n}}^{t^{n+1}}(s-t^{n})^{5}\frac{\partial^{6} \bm{u}}{\partial t^{6}}(s)\,ds+\frac{1}{8}\int_{t^{n-1}}^{t^{n+1}}(s-t^{n-1})^{5}\frac{\partial^{6} \bm{u}}{\partial t^{6}}(s)\,ds\nonumber\\[2mm]
&-\frac{1}{6}\int_{t^{n-2}}^{t^{n+1}}(s-t^{n-2})^{5}\frac{\partial^{6} \bm{u}}{\partial t^{6}}(s)\,ds+\frac{1}{8}\int_{t^{n-3}}^{t^{n+1}}(s-t^{n-3})^{5}\frac{\partial^{6} \bm{u}}{\partial t^{6}}(s)\,ds\nonumber\\[2mm]
&-\frac{1}{20}\int_{t^{n-4}}^{t^{n+1}}(s-t^{n-4})^{5}\frac{\partial^{6} \bm{u}}{\partial t^{6}}(s)\,ds+\frac{1}{120}\int_{t^{n-5}}^{t^{n+1}}(s-t^{n-5})^{5}\frac{\partial^{6} \bm{u}}{\partial t^{6}}(s)\,ds.\label{un1andgammaun}
\end{align}
Similarly, we can derive that
\begin{align*}
&\left|a(\gamma_6(\bm{u}_h^{n}),\bm{u}^{n+1}-\gamma_6(\bm{u}^n),\hat{\eta}_{\bm{u}}^{n+1})\right|\\[2mm]
%\leq&\|\gamma_k(\bm{u}_h^{n})\|_{1,2}\|\bm{u}^{n+1}-\gamma_k(\bm{u}^n)\|_{1,2}\|\eta_{\bm{u}}^{n+1}-\mu_k\eta_{\bm{u}}^{n}\|_{1,2}\\
%\leq& C\|\gamma_k(\bm{u}_h^{n})\|_{1,2}^2\Big\|\sum_{i=1}^kb_i\int_{t^{n+1-i}}^{t^{n+1}}(t^{n+1-i}-s)^{k-1}\frac{\partial^{k} \bm{u}}{\partial t^{k}}(s)\,ds\Big\|_{1,2}^2
%+\varepsilon\nu(\|\nabla\eta_{\bm{u}}^{n+1}\|_0^2+\mu_k^2\|\nabla\eta_{\bm{u}}^{n}\|_0^2)\\
\leq& C(\varepsilon\nu)^{-1} \tau^{11}\|\gamma_6(\bm{u}_h^{n})\|_{1,2}^2\sum_{i=1}^6\int_{t^{n+1-i}}^{t^{n+1}}\left\|\frac{\partial^{6} \bm{u}}{\partial t^{6}}(s)\right\|_{1,2}^2\,ds
+\varepsilon\nu\|\nabla\eta_{\bm{u}}^{n+1}\|_0^2\\[2mm]
&+\varepsilon\nu\left[\|\nabla\eta_{\bm{u}}^{n}\|_0^2+\|\nabla\eta_{\bm{u}}^{n-1}\|_0^2+\|\nabla\eta_{\bm{u}}^{n-2}\|_0^2\right],
\end{align*}
and
\begin{align*}
&\left|a(\gamma_6(\bm{u}_h^{n}),\gamma_6(\bm{u}^n-\bm{u}_h^{n}), \hat{\eta}_{\bm{u}}^{n+1})\right|\\[2mm]
\leq&\|\mathcal{A}_{h}(\gamma_6(\bm{u}_h^{n}))\|_{0}\|\gamma_6(\eta_{\bm{u}}^{n})\|_0\|\hat{\eta}_{\bm{u}}^{n+1}\|_{1,2}
+\|\mathcal{A}_{h}(\gamma_6(\bm{u}_h^{n}))\|_{0}\|\gamma_6(e_{\bm{u}}^{n})\|_0\|\hat{\eta}_{\bm{u}}^{n+1}\|_{1,2}\\[2mm]
\leq&C(\varepsilon\nu)^{-1}\left(\|\mathcal{A}_{h}(\gamma_6(\bm{u}_h^{n}))\|_{0}^2\|\gamma_6(\eta_{\bm{u}}^n)\|_0^2+\|\mathcal{A}_{h}(\gamma_6(\bm{u}_h^{n}))\|_{0}^2\|\gamma_6(e_{\bm{u}}^{n})\|_0^2\right)\\[2mm]
&+\varepsilon\nu\|\nabla\eta_{\bm{u}}^{n+1}\|_0^2+\varepsilon\nu\left[\|\nabla\eta_{\bm{u}}^{n}\|_0^2+\|\nabla\eta_{\bm{u}}^{n-1}\|_0^2+\|\nabla\eta_{\bm{u}}^{n-2}\|_0^2\right].
\end{align*}
Combining these estimates, we can conclude that
\begin{align}
\left|(E_{6,1}, \hat{\eta}_{\bm{u}}^{n+1})\right|\leq&3\varepsilon\nu\left[\|\nabla\eta_{\bm{u}}^{n+1}\|_0^2+\|\nabla\eta_{\bm{u}}^{n}\|_0^2+\|\nabla\eta_{\bm{u}}^{n-1}\|_0^2+\|\nabla\eta_{\bm{u}}^{n-2}\|_0^2\right]\nonumber\\[2mm]
&+ C(\varepsilon\nu)^{-1} \tau^{11}\|\bm{u}^{n+1}\|_{2,2}^2\sum_{i=1}^6\int_{t^{n+1-i}}^{t^{n+1}}\left\|\frac{\partial^{6} \bm{u}}{\partial t^{6}}(s)\right\|_{0}^2\,ds\nonumber\\[2mm]
&+ C(\varepsilon\nu)^{-1} \tau^{11}\|\gamma_6(\bm{u}_h^{n})\|_{1,2}^2\sum_{i=1}^6 \int_{t^{n+1-i}}^{t^{n+1}}\left\|\frac{\partial^{6} \bm{u}}{\partial t^{6}}(s)\right\|_{1,2}^2\,ds\nonumber\\[2mm]
&+C(\varepsilon\nu)^{-1}\left(\|\mathcal{A}_{h}(\gamma_6(\bm{u}_h^{n}))\|_{0}^2+\|\bm{u}^{n+1}\|_{2,2}^2\right)\|\gamma_6(\eta_{\bm{u}}^n)\|_0^2\nonumber\\[2mm]
&+C(\varepsilon\nu)^{-1}\left(\|\mathcal{A}_{h}(\gamma_6(\bm{u}_h^{n}))\|_{0}^2+\|\bm{u}^{n+1}\|_{2,2}^2\right)\|\gamma_6(e_{\bm{u}}^{n})\|_0^2.\label{fully:esti4k6}
\end{align}
We continue to estimate
\begin{align}
\left|(E_{6,2}, \hat{\eta}_{\bm{u}}^{n+1})\right|\leq &\varepsilon\nu\tau\left[\|\nabla\eta_{\bm{u}}^{n+1}\|_0^2+\|\nabla\eta_{\bm{u}}^{n}\|_0^2+\|\nabla\eta_{\bm{u}}^{n-1}\|_0^2+\|\nabla\eta_{\bm{u}}^{n-2}\|_0^2\right]\nonumber\\[2mm]
 &+C(\varepsilon\nu)^{-1}\tau^{-1}\tau^{13}\sum_{i=1}^6\int_{t^{n+1-i}}^{t^{n+1}}\left\|\frac{\partial^{7} \mathcal{P}_h\bm{u}}{\partial t^{7}}(s)\right\|_0^2\,ds,\label{fully:esti5k6}
 \end{align}
and
\begin{align}
\left|(E_{6,3}, \hat{\eta}_{\bm{u}}^{n+1})\right|\leq &C(\varepsilon\nu)^{-1}\tau h^{2(l+1)} \int_{t^{n-5}}^{t^{n+1}} \left( \left\|\frac{\partial\bm u}{\partial t}(s)\right\|_{l+1,2}^2+ \left\|\frac{\partial p}{\partial t}(s)\right\|_{l,2}^2\right)\,ds\nonumber\\[2mm]
&+\varepsilon\nu\left[\|\nabla\eta_{\bm{u}}^{n+1}\|_0^2+\|\nabla\eta_{\bm{u}}^{n}\|_0^2+\|\nabla\eta_{\bm{u}}^{n-1}\|_0^2+\|\nabla\eta_{\bm{u}}^{n-2}\|_0^2\right].\label{fully:esti6k6}
\end{align}
Now, combining (\ref{fully:esti4k6})-(\ref{fully:esti6k6}) with (\ref{fully:esti00k6}),  and then taking the sum for $n$ from $5$ to $m$, we obtain
\begin{align}
%&\sum_{i,j=1}^6g_{ij}(\nabla{\bm{u}}_h^{n+i-5}, \nabla{\bm{u}}_h^{n+j-5})-\sum_{i,j=1}^6g_{ij}(\nabla{\bm{u}}_h^{n+i-6}, \nabla{\bm{u}}_h^{n+j-6})\nonumber\\
&| \mathbb{U}_{\eta}^{m+1}|_{G}^2+\nu\tau\sum_{n=5}^{m}(\nabla\eta_{\bm{u}}^{n+1},\nabla\hat{\eta}_{\bm{u}}^{n+1})%\nonumber\\
%\end{align}
%\begin{align}
\leq C h^{2(l+1)}\tau^2 \int_{0}^{T} \left( \left\|\frac{\partial\bm u}{\partial t}(s)\right\|_{l+1,2}^2+ \left\|\frac{\partial p}{\partial t}(s)\right\|_{l,2}^2\right)\,ds \nonumber\\[2mm]
&+C\tau\sum_{n=5}^{m}\left(\|\mathcal{A}_{h}(\gamma_6(\bm{u}_h^{n}))\|_{0}^2+\|\bm{u}^{n+1}\|_{2,2}^2\right)\left(\|\gamma_6(\eta_{\bm{u}}^n)\|_0^2+\|\gamma_6(e_{\bm{u}}^n)\|_0^2\right)\nonumber \\[2mm]
&+5\varepsilon\nu\tau\sum_{n=5}^{m} \left[\|\nabla\eta_{\bm{u}}^{n+1}\|_0^2+\|\nabla\eta_{\bm{u}}^{n}\|_0^2+\|\nabla\eta_{\bm{u}}^{n-1}\|_0^2+\|\nabla\eta_{\bm{u}}^{n-2}\|_0^2\right]\nonumber\\[2mm]
&+C\tau^{12}\int_{0}^{T}\left(\left\|\frac{\partial^{6} \bm{u}}{\partial t^{6}}(s)\right\|_0^2+\left\|\frac{\partial^{6} \bm{u}}{\partial t^{6}}(s)\right\|_{1,2}^2+\left\|\frac{\partial^{7} \bm{u}}{\partial t^{7}}(s)\right\|_0^2\right)\,ds+|  \mathbb{U}_{\eta}^{5}|_{G}^2.  \label{fully:esti110k6}
\end{align}
Similar to (\ref{fully:uh3k3}), we can easily derive the following
\begin{align}
\nu\sum_{n=5}^m(\nabla\eta_{\bm{u}}^{n+1},\nabla\hat{\eta}_{\bm{u}}^{n+1})\ge& \nu\frac{1}{32}\sum_{n=5}^m\|\nabla\eta_{\bm{u}}^{n+1}\|_0^2%\nonumber\\
-\nu\left(\nabla\eta_{\bm{u}}^{6},  \frac{13}{9}\nabla\eta_{\bm{u}}^{5}-\frac{25}{36}\nabla\eta_{\bm{u}}^{4}+\frac{1}{9}\nabla\eta_{\bm{u}}^{3}\right)\nonumber\\[2mm]
&-\nu\left(\nabla\eta_{\bm{u}}^{7}, -\frac{25}{36}\nabla\eta_{\bm{u}}^{5}+\frac{1}{9}\nabla\eta_{\bm{u}}^{4}\right)-\nu\left(\nabla\eta_{\bm{u}}^{8}, \frac{1}{9}\nabla\eta_{\bm{u}}^{5}\right). \label{fully:esti9k6}
\end{align}
 By invoking Young's inequality, there holds
\begin{align}
&\left|\nu\left(\nabla\eta_{\bm{u}}^{6},  \frac{13}{9}\nabla\eta_{\bm{u}}^{5}-\frac{25}{36}\nabla\eta_{\bm{u}}^{4}+\frac{1}{9}\nabla\eta_{\bm{u}}^{3}\right)\right|\nonumber\\[2mm]
\leq & 3\varepsilon\nu\|\nabla\eta_{\bm{u}}^{6}\|_0^2+C(\varepsilon\nu)^{-1}\left[\|\nabla\eta_{\bm{u}}^{5}\|_0^2+\|\nabla\eta_{\bm{u}}^{4}\|_0^2+\|\nabla\eta_{\bm{u}}^{3}\|_0^2\right]  \label{fully:esti9k600}\\[2mm]
&\left|\nu\left(\nabla\eta_{\bm{u}}^{7}, -\frac{25}{36}\nabla\eta_{\bm{u}}^{5}+\frac{1}{9}\nabla\eta_{\bm{u}}^{4}\right)\right|\nonumber\\[2mm]
\leq& 2 \varepsilon\nu\|\nabla\eta_{\bm{u}}^{7}\|_0^2+C(\varepsilon\nu)^{-1}\left[\|\nabla\eta_{\bm{u}}^{5}\|_0^2+\|\nabla\eta_{\bm{u}}^{4}\|_0^2\right] \label{fully:esti9k601}\\[2mm]
&\left|\nu\left(\nabla\eta_{\bm{u}}^{8}, \frac{1}{9}\nabla\eta_{\bm{u}}^{5}\right)\right|\leq \varepsilon\nu\|\nabla\eta_{\bm{u}}^{8}\|_0^2+C(\varepsilon\nu)^{-1}\|\nabla\eta_{\bm{u}}^{5}\|_0^2.\label{fully:esti9k602}
\end{align}
Now, combining (\ref{fully:esti9k6})-(\ref{fully:esti9k602}) with (\ref{fully:esti110k6}), we derive
\begin{align}
&| \mathbb{U}_{\eta}^{m+1}|_{G}^2+\nu\tau\frac{1}{32} \sum_{n=5}^{m}\|\nabla\eta_{\bm{u}}^{n+1}\|_0^2\nonumber\\[2mm]
\leq &|  \mathbb{U}_{\eta}^{5}|_{G}^2+Ch^{2(l+1)} \tau^2\int_{0}^{T} \left( \left\|\frac{\partial\bm u}{\partial t}(s)\right\|_{l+1,2}^2+ \left\|\frac{\partial p}{\partial t}(s)\right\|_{l,2}^2\right)\,ds \nonumber\\[2mm]
&+C\tau\sum_{n=5}^{m}\left(\|\mathcal{A}_{h}(\gamma_6(\bm{u}_h^{n}))\|_{0}^2+\|\bm{u}^{n+1}\|_{2,2}^2\right)\left(\|\gamma_6(\eta_{\bm{u}}^n)\|_0^2+\|\gamma_6(e_{\bm{u}}^n)\|_0^2\right)\nonumber \\[2mm]
&+5\varepsilon\nu\tau\sum_{n=5}^{m} \left[\|\nabla\eta_{\bm{u}}^{n+1}\|_0^2+\|\nabla\eta_{\bm{u}}^{n}\|_0^2+\|\nabla\eta_{\bm{u}}^{n-1}\|_0^2+\|\nabla\eta_{\bm{u}}^{n-2}\|_0^2\right]\nonumber\\[2mm]
&+C\tau^{12}\int_{0}^{T}\left(\left\|\frac{\partial^{6} \bm{u}}{\partial t^{6}}(s)\right\|_0^2+\left\|\frac{\partial^{6} \bm{u}}{\partial t^{6}}(s)\right\|_{1,2}^2+\left\|\frac{\partial^{7} \bm{u}}{\partial t^{7}}(s)\right\|_0^2\right)\,ds \nonumber\\[2mm]
&+\varepsilon\nu\tau \left[ \|\nabla\eta_{\bm{u}}^{8}\|_0^2+2\|\nabla\eta_{\bm{u}}^{7}\|_0^2+3\|\nabla\eta_{\bm{u}}^{6}\|_0^2\right] +C(\varepsilon\nu)^{-1}\tau\left[3\|\nabla\eta_{\bm{u}}^{5}\|_0^2+2\|\nabla\eta_{\bm{u}}^{4}\|_0^2+\|\nabla\eta_{\bm{u}}^{3}\|_0^2 \right]. \label{fully:esti11k6}
\end{align}
Next,  we can choose $\varepsilon$ small enough such that 
$\frac{1}{64}>26\varepsilon$.
Using Hypothesis \ref{initial value estimate},  and noting  that $G=(g_{ij})$  is a symmetric positive definite matrix with smallest  eigenvalue $\lambda_{3g}$ and largest eigenvalue $\lambda_{3G}$, we then obtain
\begin{align}
&\lambda_{3g}\|\eta_{\bm{u}}^{m+1}\|_0^2+\tau\frac{1}{64}\sum_{n=5}^{m}\nu\|\nabla\eta_{\bm{u}}^{n+1}\|_0^2\nonumber\\[2mm]
%\leq &\sum_{i,j=1}^kg_{ij}(\eta_{\bm{u}}^{m+1+i-k}, \eta_{\bm{u}}^{m+1+j-k})+\tau\frac{1-\mu_k^2}{2}\sum_{n=0}^{m+1}\nu\|\nabla\eta_{\bm{u}}^{n}\|_0^2\nonumber\\
\leq&C\tau\sum_{n=5}^m\left(\|\mathcal{A}_{h}(\gamma_6(\bm{u}_h^n))\|_{0}^2+\|\bm{u}^{n+1}\|_{2,2}^2\right)\|\gamma_6(\eta_{\bm{u}}^n)\|_0^2\nonumber\\[2mm]
&+C\tau\sum_{n=5}^m\left(\|\mathcal{A}_{h}(\gamma_6(\bm{u}_h^n))\|_{0}^2+\|\bm{u}^{n+1}\|_{2,2}^2\right)\|\gamma_6(e_{\bm{u}}^n)\|_0^2\nonumber\\[2mm]
&+ C\tau^{12}\int_{0}^{T}\left(\left\|\frac{\partial^{6} \bm{u}}{\partial t^{6}}(s)\right\|_0^2+\left\|\frac{\partial^{6} \bm{u}}{\partial t^{6}}(s)\right\|_{1,2}^2+\left\|\frac{\partial^{7} \bm{u}}{\partial t^{7}}(s)\right\|_0^2\right)\,ds+Ch^{2(l+1)}+C\tau^{12}\nonumber\\[2mm]
&+Ch^{2(l+1)}\tau^2 \int_{0}^{T} \left( \left\|\frac{\partial\bm u}{\partial t}(s)\right\|_{l+1,2}^2+ \left\|\frac{\partial p}{\partial t}(s)\right\|_{l,2}^2\right)\,ds+\lambda_{3G}\sum_{i=0}^5\|\eta_{\bm{u}}^{i}\|_0^2.\label{fully:esti12k6}
\end{align}
By invoking (\ref{lemmadeltah}) and the assumptions in  Hypothesis \ref{assum:solution2} imposed on the exact solution, we derive the following
\begin{align*}
&\|\nabla\bm{u}_h^{m+1}\|_0\leq C_*,\,\, \sum_{n=5}^{m}\nu \tau\|\mathcal{A}_{h}\bm{u}_h^{n+1}\|_0^2\leq C_*, \,\, \|\bm{u}^{n+1}\|_{2,2}^2\leq C_1.
\end{align*}
Applying Gr$\ddot{\text{o}}$nwall Lemma \ref{def:gronwall2} to (\ref{fully:esti12k6}) then yields
\begin{align}
&\|\eta_{\bm{u}}^{m+1}\|_0^2+\tau\frac{1}{64}\sum_{n=5}^{m}\nu\|\nabla\eta_{\bm{u}}^{n+1}\|_0^2\nonumber\\
%\leq&C\tau\sum_{n=1}^md_n\Big(\|\nabla \hat{e}_{\bm{u}}^n\|_0^2+\|\nabla \bar{e}_{\bm{u}}^n\|_0^2+\|\nabla \bar{e}_{\bm{u}}^{n-1}\|_0^2+\|\nabla\tilde{e}_{\bm{u}}^{n}\|_0^2\Big)\\
%&+ CT(\tau)^4\\
\leq& C\exp\left(CC_*+CC_1\right)\left(T\tau^{12}+h^{2(l+1)}\right)\leq C_u\left(\tau^{12}+h^{2(l+1)}\right). \label{fully:esti13k6}
\end{align}
By applying the triangle inequality on (\ref{fully:esti13}) and (\ref{fully:esti13k6}), and the approximation properties of the projection operators, the desired result is now the conclusion of  (\ref{space discrete : theorem 1}). 
%\end{proof}

%By virtue of Hypothesis \ref{assum:solution2}  and Hypothesis \ref{assum:solution3}, we are ready to show the optimal $H^1$ convergence result for our BDF-$k$ in full  discrete setting.
%\begin{theorem}\label{space discrete : theorem 2}
%Suppose that the continuous system (\ref{weak:eq1})-(\ref{weak:eq2}) has a unique solution  $(\bm{u}^n, p^n)$ satisfying  Hypothesis \ref{assum:solution2}  and Hypothesis \ref{assum:solution3}. Then there exist a positive constant $\tau _0$ such that when $\tau \leq \tau _0$,  the finite element problem (\ref{weak space discrete:eq1})-(\ref{weak space discrete:eq2}) admits a unique solution  $(\bm{u}_h^n,p_h^n)$, which satisfies, for all $m+1\leq N$ and $k=1,\cdots,5$
%\begin{equation*}
%\begin{split}
%&\Vert\nabla\bm{u}^{m+1}-\nabla\bm{u}_h^{m+1}\Vert_0
%\leq  C_u\left(h^{l}+(\tau )^{k}\right),
%\end{split}
%\end{equation*}
%where the constant $C_u$ depends on $T,\nu,\Omega$  and the exact solution $\bm{u}$, but is independent
%of $\tau $ and $h$.
%\end{theorem}
%\begin{proof}
\textbf{Step III}: According to the definitions of $\alpha_k$ and $\beta_k$,  it can be deduced that by  equation (\ref{eq:erroreq2}) 
 \begin{align}
&\tau (q_h,\nabla\cdot(\alpha_k{\eta}_{\bm{u}}^{n+1}-\beta_k(\eta_{\bm{u}}^{n})))=0.\label{eq:erroreq22}
%&\tau (q_h,\nabla\cdot(\alpha_k{\eta}_{\bm{u}}^{n}-\beta_k(\eta_{\bm{u}}^{n-1})))=0. \label{eq:erroreq22new}
\end{align}
%In addition to Hypotheses~2.1--2.2, we assume that
%\begin{equation}
%  \partial_t p\in L^2(0,T;H^l(\Omega)).
%  \label{eq:additional-pressure-regularity}
%\end{equation}
%This assumption is needed because the velocity component of the Stokes
%projection depends on both $u$ and $p$.
For $1\leq k\leq 5$,  we test (\ref{eq:erroreq1}) with $\bm{v}_h=\frac{\alpha_k\eta_{\bm{u}}^{n+1}-\beta_k(\eta_{\bm{u}}^{n})}{\tau }$ and test  (\ref{eq:erroreq22}) with $q_h=\frac{1}{\tau }{\eta}_{p}^{n+1}$. Adding the resulting identities and canceling the pressure terms, we obtain
\begin{align}
&\left\|\frac{\alpha_k\eta_{\bm{u}}^{n+1}-\beta_k(\eta_{\bm{u}}^{n})}{\tau }\right\|_0^2+\frac{\nu}{\tau }(\nabla\eta_{\bm{u}}^{n+1},\nabla(\alpha_k\eta_{\bm{u}}^{n+1}-\beta_k(\eta_{\bm{u}}^{n})))\nonumber\\[2mm]
&=\left( E_{k,1}+\frac{1}{\tau }E_{k,2}+E_{k,3},\frac{\alpha_k\eta_{\bm{u}}^{n+1}-\beta_k(\eta_{\bm{u}}^{n})}{\tau }\right). \label{fully:esti000new0}
\end{align}
Next, at time level $t_n$, we test (\ref{eq:erroreq1}) with $\bm{v}_h=-\mu_k\frac{\alpha_k\eta_{\bm{u}}^{n+1}-\beta_k(\eta_{\bm{u}}^{n})}{\tau }$. Using the corresponding discrete divergence relation to eliminate the
pressure term, we obtain
\begin{align}
&-\mu_k\left( \frac{\alpha_k\eta_{\bm{u}}^{n}-\beta_k(\eta_{\bm{u}}^{n-1})}{\tau }, \frac{\alpha_k\eta_{\bm{u}}^{n+1}-\beta_k(\eta_{\bm{u}}^{n})}{\tau }\right)-\frac{\nu}{\tau }(\mu_k\nabla\eta_{\bm{u}}^{n},\nabla(\alpha_k\eta_{\bm{u}}^{n+1}-\beta_k(\eta_{\bm{u}}^{n})))\nonumber\\[2mm]
&=-\mu_k\left( \tilde{E}_{k,1}+\frac{1}{\tau } \tilde{E}_{k,2}+ \tilde{E}_{k,3},\frac{\alpha_k\eta_{\bm{u}}^{n+1}-\beta_k(\eta_{\bm{u}}^{n})}{\tau }\right). \label{fully:esti000new1}
\end{align}
Here, the residual terms at time $t_n$ are defined by
\begin{align*}
\tilde{E}_{k,1}=&-\bm{u}^{n}\cdot\nabla\bm{u}^{n}+\gamma_k(\bm{u}_h^{n-1})\cdot\nabla\gamma_k(\bm{u}_h^{n-1}),
\end{align*}
\begin{align*}
\tilde{E}_{k,2}=&\alpha_k\bm{u}^{n}-\beta_k(\bm{u}^{n-1})-\tau \frac{\partial\bm{u}^{n}}{\partial t},
\end{align*}
and
\begin{align*}
\tilde{E}_{k,3}=\alpha_k\mathcal{P}_h\bm{u}^{n}-\beta_k(\mathcal{P}_h\bm{u}^{n-1})-[\alpha_k\bm{u}^{n}-\beta_k(\bm{u}^{n-1})].
\end{align*}
For the sake of simplicity, set $d_{\bm{u}}^{n+1}=\frac{\alpha_k\eta_{\bm{u}}^{n+1}-\beta_k(\eta_{\bm{u}}^{n})}{\tau }$. Combining (\ref{fully:esti000new0}) and (\ref{fully:esti000new1}) gives
\begin{align}
&\left\|d_{\bm{u}}^{n+1}\right\|_0^2-\mu_k\left( d_{\bm{u}}^{n}, d_{\bm{u}}^{n+1}\right)%\nonumber\\[2mm]
+\frac{\nu}{\tau }(\nabla(\eta_{\bm{u}}^{n+1}-\mu_k\eta_{\bm{u}}^{n}),\nabla(\alpha_k\eta_{\bm{u}}^{n+1}-\beta_k(\eta_{\bm{u}}^{n})))\nonumber\\[2mm]
&=\left( E_{k,1}+\frac{1}{\tau }E_{k,2}+E_{k,3}-\mu_k\left( \tilde{E}_{k,1}+\frac{1}{\tau } \tilde{E}_{k,2}+ \tilde{E}_{k,3}\right),d_{\bm{u}}^{n+1}\right). \label{fully:esti000}
\end{align}
For the left-hand side of (\ref{fully:esti000}), with the help of Lemma \ref{lemma_symmetricmatrix}, it can be concluded that 
\begin{align}
&(\nabla(\alpha_k\eta_{\bm{u}}^{n+1}-\beta_k(\eta_{\bm{u}}^{n})), \nabla(\eta_{\bm{u}}^{n+1}-\mu_k\eta_{\bm{u}}^{n}))\nonumber\\[2mm]
=&\sum_{i,j=1}^kg_{ij}(\nabla\eta_{\bm{u}}^{n+1+i-k}, \nabla\eta_{\bm{u}}^{n+1+j-k})%\nonumber\\
-\sum_{i,j=1}^kg_{ij}(\nabla\eta_{\bm{u}}^{n+i-k}, \nabla\eta_{\bm{u}}^{n+j-k})+\left\|\sum_{i=0}^k\delta_{i}\nabla\eta_{\bm{u}}^{n+1+i-k}\right\|_0^2.\label{fully:esti11a}
\end{align}
Moreover, Young's inequality yields
\begin{align*}
 \left| -\mu_k\left(d_{\bm{u}}^{n}, d_{\bm{u}}^{n+1}\right)\right|\leq \frac{1}{2}\left\|d_{\bm{u}}^{n+1}\right\|_0^2+\frac{\mu_k^2}{2}\left\|d_{\bm{u}}^{n}\right\|_0^2.
 \end{align*}
We bound the terms on the right-hand side of  (\ref{fully:esti000}) using (\ref{bestimate}). Then  
\begin{align*}
&\left|a\left(-\bm{u}^{n+1}+\gamma_k(\bm{u}^n)-\gamma_k(\bm{u}^n-\bm{u}_h^n),\bm{u}^{n+1}, d_{\bm{u}}^{n+1}\right)\right|\\[2mm]
\leq&\left(\|\bm{u}^{n+1}-\gamma_k(\bm{u}^n)\|_{1,2}+\| \gamma_k(\bm{u}^n-\bm{u}_h^n)\|_{1,2}\right)\|\bm{u}^{n+1}\|_{2,2} \left\|d_{\bm{u}}^{n+1}\right\|_{0}\\[2mm]
%\leq&C\varepsilon^{-1}\|\bm{u}^{n+1}\|_{2,2}^2 \left\|\sum_{i=1}^kb_i\int_{t^{n+1-i}}^{t^{n+1}}(t^{n+1-i}-s)^{k-1}\frac{\partial^{k} \bm{u}}{\partial t^{k}}(s)\,ds\right\|_{1,2}^2\\
%&+2\varepsilon \left\|\frac{\alpha_k\eta_{\bm{u}}^{n+1}-\beta_k(\eta_{\bm{u}}^{n})}{\tau }\right\|_{0}^2+C\varepsilon^{-1}\left(\| \gamma_k(\eta_{\bm{u}}^n)\|_{1,2}^2+\| \gamma_k(e_{\bm{u}}^n)\|_{1,2}^2\right)\|\bm{u}^{n+1}\|_{2,2}^2\\
\leq&C\varepsilon^{-1}\|\bm{u}^{n+1}\|_{2,2}^2 \tau^{2k-1} \sum_{i=1}^ko_i\int_{t^{n+1-i}}^{t^{n+1}}\left\|\frac{\partial^{k} \bm{u}}{\partial t^{k}}(s)\right\|_{1,2}^2\,ds\\[2mm]
&+2\varepsilon \left\|d_{\bm{u}}^{n+1}\right\|_{0}^2+C\varepsilon^{-1}\left(\| \gamma_k(\eta_{\bm{u}}^n)\|_{1,2}^2+\| \gamma_k(e_{\bm{u}}^n)\|_{1,2}^2\right)\|\bm{u}^{n+1}\|_{2,2}^2.
\end{align*}
%where the coefficients $b_i$ denote certain  fixed and bounded constants  determined by the truncation error. 
 Following similar techniques, one can derive that
\begin{align*}
&\left|a\left(\gamma_k(\bm{u}_h^{n}),\bm{u}^{n+1}-\gamma_k(\bm{u}^n), d_{\bm{u}}^{n+1}\right)\right|\\[2mm]
\leq&\|\gamma_k(\bm{u}_h^{n})\|_{1,2}\|\bm{u}^{n+1}-\gamma_k(\bm{u}^n)\|_{2,2} \left\|d_{\bm{u}}^{n+1}\right\|_{0}\\[2mm]
%\leq& C\varepsilon^{-1}\|\gamma_k(\bm{u}_h^{n})\|_{1,2}^2\|\sum_{i=1}^kb_i\int_{t^{n+1-i}}^{t^{n+1}}(t^{n+1-i}-s)^{k-1}\frac{\partial^{k} \bm{u}}{\partial t^{k}}(s)\,ds\|_{2,2}^2\\
%&+\varepsilon\|\frac{\alpha_k\eta_{\bm{u}}^{n+1}-\beta_k(\eta_{\bm{u}}^{n})}{\tau }\|_0^2\\
\leq& C\varepsilon^{-1}\tau^{2k-1} \|\gamma_k(\bm{u}_h^{n})\|_{1,2}^2\sum_{i=1}^ko_i\int_{t^{n+1-i}}^{t^{n+1}}\left\|\frac{\partial^{k} \bm{u}}{\partial t^{k}}(s)\right\|_{2,2}^2\,ds
+\varepsilon \left\|d_{\bm{u}}^{n+1} \right\|_0^2,
\end{align*}
and
\begin{align*}
&\left|a\left(\gamma_k(\bm{u}_h^{n}),\gamma_k(\bm{u}^n-\bm{u}_h^{n}), d_{\bm{u}}^{n+1}\right)\right|\\[2mm]
\leq&\|\mathcal{A}_{h}(\gamma_k(\bm{u}_h^{n}))\|_{0}\left(\|\gamma_k(\eta_{\bm{u}}^{n})\|_{1,2}+\|\gamma_k(e_{\bm{u}}^{n})\|_{1,2}\right) \left\|d_{\bm{u}}^{n+1}\right\|_{0}\\[2mm]
\leq&C\varepsilon^{-1}\|\mathcal{A}_{h}(\gamma_k(\bm{u}_h^{n}))\|_{0}^2\left(\|\gamma_k(\eta_{\bm{u}}^n)\|_{1,2}^2+\|\gamma_k(e_{\bm{u}}^{n})\|_{1,2}^2\right)
+\varepsilon\left\|d_{\bm{u}}^{n+1}\right\|_{0}^2.
\end{align*}
This implies 
\begin{align}
&\left|\left(E_{k,1}, d_{\bm{u}}^{n+1}\right)\right|\nonumber\\[2mm]
\leq&4\varepsilon \left\|d_{\bm{u}}^{n+1}\right\|_{0}^2%\nonumber\\
+ C\varepsilon^{-1} \tau^{2k-1}\|\bm{u}^{n+1}\|_{2,2}^2\sum_{i=1}^ko_i\int_{t^{n+1-i}}^{t^{n+1}}\left\|\frac{\partial^{k} \bm{u}}{\partial t^{k}}(s)\right\|_{1,2}^2\,ds\nonumber\\[2mm]
&+ C\varepsilon^{-1} \tau^{2k-1}\|\gamma_k(\bm{u}_h^{n})\|_{1,2}^2\sum_{i=1}^ko_i \int_{t^{n+1-i}}^{t^{n+1}}\left\|\frac{\partial^{k} \bm{u}}{\partial t^{k}}(s)\right\|_{2,2}^2\,ds\nonumber\\[2mm]
%&+C\varepsilon^{-1}\left(\|\mathcal{A}_{h}(\gamma_k(\bm{u}_h^{n}))\|_{0}^2+\|\bm{u}^{n+1}\|_{2,2}^2\right)\|\gamma_k(\eta_{\bm{u}}^n)\|_{1,2}^2\nonumber\\
&+C\varepsilon^{-1}\left(\|\mathcal{A}_{h}(\gamma_k(\bm{u}_h^{n}))\|_{0}^2+\|\bm{u}^{n+1}\|_{2,2}^2\right)\left[\|\gamma_k(\eta_{\bm{u}}^n)\|_{1,2}^2+\|\gamma_k(e_{\bm{u}}^{n})\|_{1,2}^2\right].\label{fully:esti44}
\end{align}
We continue to estimate
\begin{align}
\left|\left(E_{k,2},d_{\bm{u}}^{n+1}\right)\right|\leq& \varepsilon \left\|d_{\bm{u}}^{n+1}\right\|_0^2%\nonumber\\
 +C\varepsilon^{-1}\tau^{2k} \sum_{i=1}^ko_i\int_{t^{n+1-k}}^{t^{n+1}}\left\|\frac{\partial^{k+1} \mathcal{P}_h\bm{u}}{\partial t^{k+1}}(s)\right\|_0^2\,ds,\label{fully:esti55}
 \end{align}
and
\begin{align}
&\left|\left(E_{k,3}, d_{\bm{u}}^{n+1}\right) \right|\leq C\varepsilon^{-1}h^{2(l+1)} \tau\int_{t^{n+1-k}}^{t^{n+1}} \left( \left\|\frac{\partial\bm u}{\partial t}(s)\right\|_{l+1,2}^2+ \left\|\frac{\partial p}{\partial t}(s)\right\|_{l,2}^2\right)\,ds+\varepsilon\left\|d_{\bm{u}}^{n+1}\right\|_0^2.\label{fully:esti66}
\end{align}
The residual terms at the preceding time level are treated in the same manner. More precisely,
\begin{align}
&\left|\mu_k\left(\tilde{E}_{k,1}, d_{\bm{u}}^{n+1}\right)\right|\nonumber\\[2mm]
\leq&4\varepsilon \left\|d_{\bm{u}}^{n+1}\right\|_{0}^2%\nonumber\\
+ C\varepsilon^{-1}\mu_k^2 \tau^{2k-1}\|\bm{u}^{n}\|_{2,2}^2\sum_{i=1}^ko_i\int_{t^{n-i}}^{t^{n}}\left\|\frac{\partial^{k} \bm{u}}{\partial t^{k}}(s)\right\|_{1,2}^2\,ds\nonumber\\[2mm]
&+ C\varepsilon^{-1} \tau^{2k-1}\|\gamma_k(\bm{u}_h^{n-1})\|_{1,2}^2\sum_{i=1}^ko_i \int_{t^{n-i}}^{t^{n}}\left\|\frac{\partial^{k} \bm{u}}{\partial t^{k}}(s)\right\|_{2,2}^2\,ds\nonumber\\[2mm]
%&+C\varepsilon^{-1}\left(\|\mathcal{A}_{h}(\gamma_k(\bm{u}_h^{n}))\|_{0}^2+\|\bm{u}^{n+1}\|_{2,2}^2\right)\|\gamma_k(\eta_{\bm{u}}^n)\|_{1,2}^2\nonumber\\
&+C\varepsilon^{-1}\left(\|\mathcal{A}_{h}(\gamma_k(\bm{u}_h^{n-1}))\|_{0}^2+\|\bm{u}^{n}\|_{2,2}^2\right)\left[\|\gamma_k(\eta_{\bm{u}}^{n-1})\|_{1,2}^2+\|\gamma_k(e_{\bm{u}}^{n-1})\|_{1,2}^2\right],\label{fully:esti44new}
\end{align}
\begin{align}
\left|\mu_k\left(\tilde{E}_{k,2}, d_{\bm{u}}^{n+1}\right)\right|\leq& \varepsilon \left\|d_{\bm{u}}^{n+1}\right\|_0^2%\nonumber\\[2mm]
 +C\varepsilon^{-1}\mu_k^2\tau^{2k} \sum_{i=1}^ko_i\int_{t^{n-k}}^{t^{n}}\left\|\frac{\partial^{k+1} \mathcal{P}_h\bm{u}}{\partial t^{k+1}}(s)\right\|_0^2\,ds,\label{fully:esti55new}
 \end{align}
and
\begin{align}
&\left|\mu_k\left(\tilde{E}_{k,3}, d_{\bm{u}}^{n+1}\right) \right|\leq C\varepsilon^{-1}\mu_k^2\tau h^{2(l+1)} \int_{t^{n-k}}^{t^{n}} \left( \left\|\frac{\partial\bm u}{\partial t}(s)\right\|_{l+1,2}^2+ \left\|\frac{\partial p}{\partial t}(s)\right\|_{l,2}^2\right)\,ds+\varepsilon\left\|d_{\bm{u}}^{n+1}\right\|_0^2.\label{fully:esti66new}
\end{align}
 By combining inequalities (\ref{fully:esti44})-(\ref{fully:esti66new}) with  (\ref{fully:esti000}), we derive the following estimate
\begin{align}
&\nu\sum_{i,j=1}^kg_{ij}(\nabla\eta_{\bm{u}}^{n+1+i-k}, \nabla\eta_{\bm{u}}^{n+1+j-k})-\nu\sum_{i,j=1}^kg_{ij}(\nabla\eta_{\bm{u}}^{n+i-k}, \nabla\eta_{\bm{u}}^{n+j-k})%\nonumber\\[2mm]
+\tau (1/2-12\varepsilon)\left\|\frac{\alpha_k\eta_{\bm{u}}^{n+1}-\beta_k(\eta_{\bm{u}}^{n})}{\tau }\right\|_0^2\nonumber\\[2mm]
%\end{align}
%\begin{align}
\leq&\tau 1/2\mu_k^2\left\|\frac{\alpha_k\eta_{\bm{u}}^{n}-\beta_k(\eta_{\bm{u}}^{n-1})}{\tau }\right\|_0^2+C\tau \left(\|\mathcal{A}_{h}(\gamma_k(\bm{u}_h^n))\|_{0}^2+\|\bm{u}^{n+1}\|_{2,2}^2+\|\mathcal{A}_{h}(\gamma_k(\bm{u}_h^{n-1}))\|_{0}^2\right.\nonumber\\[2mm]
&\left.+\|\bm{u}^{n}\|_{2,2}^2\right)\left(\|\gamma_k(\eta_{\bm{u}}^n)\|_{1,2}^2+\|\gamma_k(e_{\bm{u}}^n)\|_{1,2}^2+\|\gamma_k(\eta_{\bm{u}}^{n-1})\|_{1,2}^2+\|\gamma_k(e_{\bm{u}}^{n-1})\|_{1,2}^2\right)\nonumber\\[2mm]
%&+C\tau \left(\|\mathcal{A}_{h}(\gamma_k(\bm{u}_h^n))\|_{0}^2+\|\bm{u}^{n+1}\|_{2,2}^2\right)\|\gamma_k(e_{\bm{u}}^n)\|_{1,2}^2\nonumber\\
&+ C\tau^{2k}\left(\|\bm{u}^{n+1}\|_{2,2}^2\sum_{i=1}^ko_i\int_{t^{n+1-i}}^{t^{n+1}}\left\|\frac{\partial^{k} \bm{u}}{\partial t^{k}}(s)\right\|_{1,2}^2\,ds+\mu_k^2\|\bm{u}^{n}\|_{2,2}^2\sum_{i=1}^ko_i\int_{t^{n-i}}^{t^{n}}\left\|\frac{\partial^{k} \bm{u}}{\partial t^{k}}(s)\right\|_{1,2}^2\,ds\right)\nonumber\\[2mm]
&+ C\tau^{2k}\left(\|\gamma_k(\bm{u}_h^{n})\|_{1,2}^2\sum_{i=1}^ko_i \int_{t^{n+1-i}}^{t^{n+1}}\left\|\frac{\partial^{k} \bm{u}}{\partial t^{k}}(s)\right\|_{2,2}^2\,ds+\mu_k^2\|\gamma_k(\bm{u}_h^{n-1})\|_{1,2}^2\sum_{i=1}^ko_i \int_{t^{n-i}}^{t^{n}}\left\|\frac{\partial^{k} \bm{u}}{\partial t^{k}}(s)\right\|_{2,2}^2\,ds\right)\nonumber\\[2mm]
%&+ C(\tau )^{2k}\Big(\int_{t^{n+1-k}}^{t^{n+1}}\|\frac{\partial^{k} \bm{u}}{\partial t^{k}}(s)\|_{1,2}^2\,ds\Big)\nonumber\\
&+C\tau^{2k}\sum_{i=1}^ko_i\left(\int_{t^{n+1-i}}^{t^{n+1}}\left\|\frac{\partial^{k+1} \bm{u}}{\partial t^{k+1}}(s)\right\|_0^2\,ds+\int_{t^{n-i}}^{t^{n}}\left\|\frac{\partial^{k+1} \bm{u}}{\partial t^{k+1}}(s)\right\|_0^2\,ds\right)\nonumber\\[2mm]
&+Ch^{2(l+1)}\tau^2 \left[ \int_{t^{n+1-k}}^{t^{n+1}}\left(\left\|\frac{\partial\bm{u}^{n+1}}{\partial t}\right\|_{l+1,2}^2+\left\|\frac{\partial p^{n+1}}{\partial t}\right\|_{l,2}^2\right)\,ds+ \int_{t^{n-k}}^{t^{n}}\left(\left\|\frac{\partial\bm{u}^{n}}{\partial t}\right\|_{l+1,2}^2+\left\|\frac{\partial p^{n}}{\partial t}\right\|_{l,2}^2\right)\,ds\right]. \label{fully:esti1111}
\end{align}
Since $0\leq\mu_k<1$, we choose $\varepsilon>0$ sufficiently small so that $1/2-12\varepsilon\ge\mu_k^2/2$. 
%Given $0\leq \mu_k<1$, we choose $\varepsilon$ sufficiently small such that $\frac{1}{2}-6\varepsilon>(\frac{1}{2}+6\varepsilon)\mu_k^2.$
Summing both sides of equation (\ref{fully:esti1111})  over $n$ from $k$ to $m$, and invoking Hypothesis \ref{assum:solution2} on the exact solution  along with the fact that $G=(g_{ij})$  is a symmetric positive definite matrix with smallest eigenvalue $\lambda_{4g}$ and  largest eigenvalue $\lambda_{4G}$, we obtain the following estimate %after removing some non-essential terms
\begin{align}
&\lambda_{4g}\|\nabla\eta_{\bm{u}}^{m+1}\|_0^2+\tau \sum_{n=k}^{m}(1/2-12\varepsilon-1/2\mu_k^2)\left\|\frac{\alpha_k\eta_{\bm{u}}^{n+1}-\beta_k(\eta_{\bm{u}}^{n})}{\tau }\right\|_0^2\nonumber\\[2mm]
%\leq &\sum_{i,j=1}^kg_{ij}(\nabla\eta_{\bm{u}}^{m+1+i-k}, \nabla\eta_{\bm{u}}^{m+1+j-k})+\tau \sum_{n=0}^{m+1}(1-6\varepsilon)\Big\|\frac{\alpha_k\eta_{\bm{u}}^{n+1}-\beta_k(\eta_{\bm{u}}^{n})}{\tau }\Big\|_0^2\nonumber\\
\leq&C\tau \sum_{n=k}^{m}d_{n}\|\nabla\eta_{\bm{u}}^{n}\|_{0}^2+C\tau \sum_{n=k-1}^m\left(\|\bm{u}^{n+1}\|_{2,2}^2\|\gamma_k(\eta_{\bm{u}}^n)\|_{1,2}^2+\|\bm{u}^{n}\|_{2,2}^2\|\gamma_k(\eta_{\bm{u}}^{n-1})\|_{1,2}^2\right)\nonumber\\[2mm]
&+C\tau \sum_{n=k}^m\left(\|\mathcal{A}_{h}(\gamma_k(\bm{u}_h^n))\|_{0}^2+\|\bm{u}^{n+1}\|_{2,2}^2\right)\|\gamma_k(e_{\bm{u}}^n)\|_{1,2}^2\nonumber\\[2mm]
&+C\tau \sum_{n=k}^m\left(\|\mathcal{A}_{h}(\gamma_k(\bm{u}_h^{n-1}))\|_{0}^2+\|\bm{u}^{n}\|_{2,2}^2\right)\|\gamma_k(e_{\bm{u}}^{n-1})\|_{1,2}^2\nonumber\\[2mm]
&+ C\tau^{2k}\int_{0}^{T}\left(\left\|\frac{\partial^{k} \bm{u}}{\partial t^{k}}(s)\right\|_{1,2}^2+\left\|\frac{\partial^{k} \bm{u}}{\partial t^{k}}(s)\right\|_{2,2}^2+\left\|\frac{\partial^{k+1} \bm{u}}{\partial t^{k+1}}(s)\right\|_0^2\right)\,ds\nonumber\\[2mm]
&+Ch^{2(l+1)}\tau^2\int_{0}^{T}\left(\left\|\frac{\partial\bm{u}^{n+1}}{\partial t}\right\|_{l+1,2}^2+\left\|\frac{\partial\bm{u}^{n}}{\partial t}\right\|_{l+1,2}^2+\left\|\frac{\partial p^{n+1}}{\partial t}\right\|_{l,2}^2+\left\|\frac{\partial p^{n}}{\partial t}\right\|_{l,2}^2\right)\,ds+\lambda_{4G}\sum_{i=0}^{k-1}\|\nabla\eta_{\bm{u}}^{i}\|_0^2,\label{fully:esti1222}
\end{align}
with $d_n=\|\mathcal{A}_{h}(\gamma_k(\bm{u}_h^n))\|_{0}^2+\|\mathcal{A}_{h}(\gamma_k(\bm{u}_h^{n-1}))\|_{0}^2$. %By (\ref{m0}) and the assumption $\tau d_n<1$, Gr\"onwall Lemma \ref{def:gronwall} is applicable.
%\begin{align*}
%\tau \|\mathcal{A}_{h}(\gamma_k(\bm{u}_h^n))\|_{0}^2+\frac{\nu}{2}\mu_k^2+\frac{\nu}{2}\alpha_k^2+\frac{\nu}{2}<1.
%\end{align*}
By invoking Hypothesis \ref{initial value estimate}, (\ref{fully:esti13}) and applying Gr$\ddot{\text{o}}$nwall Lemma \ref{def:gronwall2} to  (\ref{fully:esti1222}), we derive the following result:
\begin{align}
&\|\nabla\eta_{\bm{u}}^{m+1}\|_0^2+\tau \sum_{n=k}^{m}(1/2-12\varepsilon-1/2\mu_k^2)\left\|\frac{\alpha_k\eta_{\bm{u}}^{n+1}-\beta_k(\eta_{\bm{u}}^{n})}{\tau }\right\|_0^2%\leq&C\tau \sum_{n=1}^md_n\Big(\|\nabla \hat{e}_{\bm{u}}^n\|_0^2+\|\nabla \bar{e}_{\bm{u}}^n\|_0^2+\|\nabla \bar{e}_{\bm{u}}^{n-1}\|_0^2+\|\nabla\tilde{e}_{\bm{u}}^{n}\|_0^2\Big)\\
%&+ CT(\tau )^4\\
\leq C_u\left(\tau^{2k}+h^{2l}\right). \label{fully:0esti1333}
\end{align}
At the first BDF time level $t_k$, the required bound for $d_{\bm u}^{k}$ follows by testing the error equation corresponding to $n=k-1$ with $d_{\bm u}^{k}$ and proceeding as above; the details are omitted for brevity. Hence, we arrive at the following final estimate
\begin{align}
&\|\nabla\eta_{\bm{u}}^{m+1}\|_0^2+\tau \sum_{n=k-1}^{m}(1/2-12\varepsilon-1/2\mu_k^2)\left\|\frac{\alpha_k\eta_{\bm{u}}^{n+1}-\beta_k(\eta_{\bm{u}}^{n})}{\tau }\right\|_0^2%\leq&C\tau \sum_{n=1}^md_n\Big(\|\nabla \hat{e}_{\bm{u}}^n\|_0^2+\|\nabla \bar{e}_{\bm{u}}^n\|_0^2+\|\nabla \bar{e}_{\bm{u}}^{n-1}\|_0^2+\|\nabla\tilde{e}_{\bm{u}}^{n}\|_0^2\Big)\\
%&+ CT(\tau )^4\\
\leq C_u\left(\tau^{2k}+h^{2l}\right). \label{fully:esti1333}
\end{align}

We now consider the case $k=6$. %For $n\geq8$, set
%\begin{align}
%  \bm{v}_h=d_{\bm{u}}^{n+1}:=\frac{\alpha_6\eta_{\bm{u}}^{n+1}-\beta_6(\eta_{\bm{u}}^{n})}{\tau}.
%\end{align}
%For $n\geq5$, 
For brevity, let 
\begin{equation}
  d_{6\bm{u}}^{n+1}:=\frac{\alpha_6\eta_{\bm{u}}^{n+1} -\beta_6(\eta_{\bm{u}}^{n})}{\tau},
  \qquad
  R_6^{n+1}:=E_{6,1}^{n+1}+\frac{1}{\tau} E_{6,2}^{n+1}+E_{6,3}^{n+1}.
  \label{eq:k6-dR}
\end{equation}
Here and below, $E_{6,j}^{n+1}$ denotes the defect $E_{6,j}$ evaluated at $t_{n+1}$. Dividing (\ref{eq:erroreq1}) by $\tau$, we write the error equation at
time $t_{n+1}$ as
\begin{equation}
  (d_{6\bm {u}}^{n+1},\bm{v}_h)+\nu(\nabla\eta_{\bm{u}}^{n+1},\nabla\bm{v}_h)-(\eta_p^{n+1},\nabla\cdot\bm{v}_h)=(R_6^{n+1},\bm{v}_h).
  \label{eq:k6-normalized-error}
\end{equation}
Since (\ref{eq:erroreq2}) holds at every time level, linearity gives
\begin{equation}
  (q_h,\nabla\cdot d_{6\bm{u}}^{n+1})=0 \qquad\forall q_h\in Q_h^{l-1}.
  \label{eq:k6-d-divergence}
\end{equation}
Thus $d_{6\bm {u}}^{n+1}\in X_{0h}^l$, so it is an admissible test function and the pressure term in (\ref{eq:k6-normalized-error}) vanishes when
$\bm{v}_h=d_{6\bm{u}}^{n+1}$.

For any sequence $\{z^{n+1}\}$, introduce the BDF6 multiplier combination
\begin{equation}
  \widehat z^{n+1} :=z^{n+1}-\frac{13}{9}z^{n}+\frac{25}{36}z^{n-1}-\frac{1}{9}z^{n-2}.
  \label{eq:k6-hat}
\end{equation}
  For $n\geq 8$, we take the error equations (\ref{eq:k6-normalized-error}) at the four time
levels $t_{n+1}$, $t_n$, $t_{n-1}$, and $t_{n-2}$, multiply these equations by
\begin{equation}
  1,\qquad -\frac{13}{9},\qquad
  \frac{25}{36},\qquad -\frac{1}{9},
  \label{eq:k6-four-weights}
\end{equation}
respectively, and add them.  In the resulting equation we take
\begin{align}
  \bm{v}_h=d_{6\bm{u}}^{n+1}=\frac{\alpha_6\eta_{\bm{u}}^{n+1}-\beta_6(\eta_{\bm{u}}^{n})}{\tau}. \label{eq:k6-test-function}
\end{align}
 Using (\ref{eq:k6-d-divergence}) with
$q_h=\widehat\eta_p^{n+1}$, we obtain
\begin{equation}
  (\widehat d_{6\bm{u}}^{n+1},d_{6\bm{u}}^{n+1})
  +\nu(\nabla\widehat\eta_{\bm{u}}^{n+1},\nabla d_{6\bm{u}}^{n+1})=(\widehat R_6^{n+1},d_{6\bm{u}}^{n+1}),
  \qquad n\geq8.
  \label{eq:k6-key-identity}
\end{equation}
We next establish the two coercivity properties associated with the
left-hand side of \eqref{eq:k6-key-identity}.  First, consider
\begin{equation}
  q(\theta):=1-\frac{13}{9}\cos\theta
  +\frac{25}{36}\cos(2\theta)-\frac19\cos(3\theta).
  \label{eq:k6-pdf-symbol}
\end{equation}
With $x=\cos\theta$, we have
\begin{equation}
  q(\theta)
  =-\frac49x^3+\frac{25}{18}x^2
  -\frac{10}{9}x+\frac{11}{36}=:J(x).
\end{equation}
The only critical point of $J$ in $[-1,1]$ is
$x_*=(25-\sqrt{145})/24$, and
\begin{equation}
  J(1)=\frac5{36},\qquad
  J(-1)=\frac{13}{4},\qquad
  J(x_*)=\frac{2377-145\sqrt{145}}{15552}>\frac1{32}.
\end{equation}
Consequently,
\begin{equation}
  q(\theta)\geq\frac1{32}
  \qquad\forall\theta\in\mathbb R.
  \label{eq:k6-symbol-positive}
\end{equation}
Let $8\leq m\leq N-1$. To pass from (\ref{eq:k6-symbol-positive}) to a finite time interval, extend the
sequence $\{d_{6\bm u}^{n+1}\}_{n=8}^{m}$ by zero outside the index
interval $[8,m]$. Parseval's identity gives
\begin{align}
  Q_{8,m}^0
  &:=\frac1{2\pi}\int_{-\pi}^{\pi}
  q(\theta)
  \left\|
    \sum_{n=8}^{m}d_{6\bm u}^{n+1}
    e^{\mathrm i n\theta}
  \right\|_0^2\,\mathrm d\theta
 % \nonumber\\
  \geq
  \frac1{32}\sum_{n=8}^{m}
  \|d_{6\bm u}^{n+1}\|_0^2.
  \label{eq:k6-parseval}
\end{align}
Expanding the integral and restoring the three values preceding
$d_{6\bm u}^{9}$, namely
$d_{6\bm u}^{8}$, $d_{6\bm u}^{7}$, and $d_{6\bm u}^{6}$, gives
\begin{equation}
  \sum_{n=8}^{m}
  (\widehat d_{6\bm u}^{\,n+1},d_{6\bm u}^{n+1})
  =Q_{8,m}^0-\mathcal B_8,
  \label{eq:k6-boundary-identity}
\end{equation}
where
\begin{align}
  \mathcal B_8
  &:=
  \left(
    d_{6\bm u}^{9},
    \frac{13}{9}d_{6\bm u}^{8}
    -\frac{25}{36}d_{6\bm u}^{7}
    +\frac19d_{6\bm u}^{6}
  \right)
%  \nonumber\\
  %&\quad
  +
  \left(
    d_{6\bm u}^{10},
    -\frac{25}{36}d_{6\bm u}^{8}
    +\frac19d_{6\bm u}^{7}
  \right)
  +
  \left(
    d_{6\bm u}^{11},
    \frac19d_{6\bm u}^{8}
  \right).
  \label{eq:k6-boundary-term}
\end{align}
For $m=8$, the terms containing $d_{6\bm u}^{10}$ and
$d_{6\bm u}^{11}$ are omitted, whereas for $m=9$ only the term
containing $d_{6\bm u}^{11}$ is omitted. Therefore, Young's inequality
gives
\begin{equation}
  \sum_{n=8}^{m}
  (\widehat d_{6\bm u}^{\,n+1},d_{6\bm u}^{n+1})
  \geq
  \frac1{64}\sum_{n=8}^{m}
  \|d_{6\bm u}^{n+1}\|_0^2
  -C\sum_{j=5}^{7}\|d_{6\bm u}^{j+1}\|_0^2.
  \label{eq:k6-finite-coercivity}
\end{equation}
For the second term in (\ref{eq:k6-key-identity}),  Lemma \ref{lemma_symmetricmatrix1} gives a
symmetric positive-definite matrix
$G=(g_{ij})_{i,j=1}^{6}$ such that
\begin{equation}
  \nu(\nabla\widehat\eta_{\bm u}^{\,n+1},
  \nabla d_{6\bm u}^{n+1})
  \geq
  \frac{\nu}{\tau}
  \left(
    \|\nabla\eta_{\bm u}^{n+1}\|_G^2
    -\|\nabla\eta_{\bm u}^{n}\|_G^2
  \right),
  \label{eq:k6-G-identity}
\end{equation}
where
\begin{equation}
  \|\nabla\eta_{\bm u}^{n+1}\|_G^2
  :=
  \sum_{i,j=1}^{6}g_{ij}
  \left(
    \nabla\eta_{\bm u}^{n-5+i},
    \nabla\eta_{\bm u}^{n-5+j}
  \right).
  \label{eq:k6-G-norm}
\end{equation}
In particular, if  $\lambda_{5g}$ and $\lambda_{5G}$ denote the smallest and
largest eigenvalues of $G$, respectively, then
\begin{equation}
   \lambda_{5g}\sum_{j=0}^{5}
  \|\nabla\eta_{\bm u}^{n+1-j}\|_0^2
  \leq
  \|\nabla\eta_{\bm u}^{n+1}\|_G^2
  \leq
 \lambda_{5G}\sum_{j=0}^{5}
  \|\nabla\eta_{\bm u}^{n+1-j}\|_0^2.
  \label{eq:k6-G-equivalence}
\end{equation}
Summing \eqref{eq:k6-key-identity} from $n=8$ to $n=m$,
multiplying by $\tau$, and using
\eqref{eq:k6-finite-coercivity} and
\eqref{eq:k6-G-identity}, we obtain
\begin{align}
  &\nu\|\nabla\eta_{\bm u}^{m+1}\|_G^2
  +\frac{\tau}{64}
  \sum_{n=8}^{m}\|d_{6\bm u}^{n+1}\|_0^2
  \nonumber\\
  \leq&
  \nu\|\nabla\eta_{\bm u}^{8}\|_G^2
  +C\tau\sum_{j=5}^{7}\|d_{6\bm u}^{j+1}\|_0^2
  +\tau\sum_{n=8}^{m}
  |(\widehat R_6^{\,n+1},d_{6\bm u}^{n+1})|.
  \label{eq:k6-energy-before-force}
\end{align}
%Since the multiplier coefficients are fixed, Young's inequality gives
%\begin{equation}
%  \tau\sum_{n=8}^m
%  |(\widehat R_6^{\,n+1},d_{\bm u}^{n+1})|
 % \leq\frac{\tau}{128}\sum_{n=8}^m
%  \|d_{\bm u}^{n+1}\|_0^2
 % +C\tau\sum_{j=5}^m\|R_6^{j+1}\|_0^2.
%  \label{eq:k6-force}
%\end{equation}
It remains to control the three starting derivatives. At each of the
three starting indices $n=5,6,7$, corresponding to the time levels
$n+1=6,7,8$, respectively, we take
\begin{equation}
  \bm v_h=d_{6\bm u}^{n+1}
  =\frac{\alpha_6\eta_{\bm u}^{n+1}
  -\beta_6(\eta_{\bm u}^{n})}{\tau}
  \label{eq:k6-pdf-start-test}
\end{equation}
in \eqref{eq:k6-normalized-error}. The pressure term again vanishes
by \eqref{eq:k6-d-divergence}, and hence
\begin{equation}
  \|d_{6\bm u}^{n+1}\|_0^2
  +\nu(\nabla\eta_{\bm u}^{n+1},
  \nabla d_{6\bm u}^{n+1})
  =(R_6^{n+1},d_{6\bm u}^{n+1}).
  \label{eq:k6-start-identity}
\end{equation}
%Using Young's inequality on the right-hand side of (\ref{eq:k6-start-identity}), we have
%\begin{equation}
%  (R_6^{n+1},d_{\bm u}^{n+1})
%  \leq \frac{1}{2}\|d_{\bm u}^{n+1}\|_0^2
%  + \frac{1}{2}\|R_6^{n+1}\|_0^2.
 % \label{eq:k6-start-force}
%\end{equation}
Using $\alpha_6=147/60$ and Young's inequality, we have
\begin{align*}
  \nu(\nabla\eta_{\bm u}^{n+1},
  \nabla d_{6\bm u}^{n+1})
  &\geq\frac{147\nu}{120\tau}
  \|\nabla\eta_{\bm u}^{n+1}\|_0^2
  -\frac{30\nu}{147\tau}
  \|\nabla\beta_6(\eta_{\bm u}^{n})\|_0^2.
\end{align*}
Applying this estimate successively for $n=5,6,7$ yields
\begin{align}
  \tau\sum_{n=5}^{7}\|d_{6\bm u}^{n+1}\|_0^2
  +\max_{5\leq n\leq7}
  \|\nabla\eta_{\bm u}^{n+1}\|_0^2
  \leq C\left(
  \sum_{j=0}^{5}\|\nabla\eta_{\bm u}^j\|_0^2
  +\tau\sum_{n=5}^{7}|(R_6^{n+1},d_{6\bm u}^{n+1})|\right).
  \label{eq:k6-start-bound}
\end{align}

Combining (\ref{eq:k6-energy-before-force}) and (\ref{eq:k6-start-bound}),  and using (\ref{eq:k6-G-equivalence}), we find
\begin{align}
  &\max_{5\leq n\leq m}
  \|\nabla\eta_{\bm u}^{n+1}\|_0^2
  + \frac{\tau}{64}\sum_{n=5}^m\|d_{6\bm u}^{n+1}\|_0^2
  \leq C\left(
  \sum_{j=0}^{5}\|\nabla\eta_{\bm u}^j\|_0^2
  +\tau\sum_{n=5}^m|(R_6^{n+1},d_{6\bm u}^{n+1})|\right).
  \label{eq:k6-linear-error-bound}
\end{align}
We now estimate $R_6^{n+1}$, with the help of (\ref{bestimate}) and (\ref{un1andgammaun}), there holds 
\begin{align*}
&\left|a\left(-\bm{u}^{n+1}+\gamma_6(\bm{u}^n)-\gamma_6(\bm{u}^n-\bm{u}_h^n),\bm{u}^{n+1},d_{6\bm{u}}^{n+1}\right)\right|\\[2mm]
\leq&\left(\|\bm{u}^{n+1}-\gamma_6(\bm{u}^n)\|_{1,2}+\| \gamma_6(\bm{u}^n-\bm{u}_h^n)\|_{1,2}\right)\|\bm{u}^{n+1}\|_{2,2} \left\|d_{6\bm{u}}^{n+1}\right\|_{0}\\[2mm]
%\leq&C\varepsilon^{-1}\|\bm{u}^{n+1}\|_{2,2}^2 \left\|\sum_{i=1}^kb_i\int_{t^{n+1-i}}^{t^{n+1}}(t^{n+1-i}-s)^{k-1}\frac{\partial^{k} \bm{u}}{\partial t^{k}}(s)\,ds\right\|_{1,2}^2\\
%&+2\varepsilon \left\|\frac{\alpha_k\eta_{\bm{u}}^{n+1}-\beta_k(\eta_{\bm{u}}^{n})}{\tau}\right\|_{0}^2+C\varepsilon^{-1}\left(\| \gamma_k(\eta_{\bm{u}}^n)\|_{1,2}^2+\| \gamma_k(e_{\bm{u}}^n)\|_{1,2}^2\right)\|\bm{u}^{n+1}\|_{2,2}^2\\
\leq&C\varepsilon^{-1}\tau^{11}\|\bm{u}^{n+1}\|_{2,2}^2\sum_{i=1}^6 \int_{t^{n+1-i}}^{t^{n+1}}\left\|\frac{\partial^{6} \bm{u}}{\partial t^{6}}(s)\right\|_{1,2}^2\,ds\\[2mm]
&+2\varepsilon \left\|d_{6\bm{u}}^{n+1}\right\|_{0}^2+C\varepsilon^{-1}\left(\| \gamma_6(\eta_{\bm{u}}^n)\|_{1,2}^2+\| \gamma_6(e_{\bm{u}}^n)\|_{1,2}^2\right)\|\bm{u}^{n+1}\|_{2,2}^2.
\end{align*} 
%where the coefficients $b_i$ denote certain  fixed and bounded constants  determined by the truncation error. 
 Following similar techniques, one can derive that
\begin{align*}
&\left|a\left(\gamma_6(\bm{u}_h^{n}),\bm{u}^{n+1}-\gamma_6(\bm{u}^n),d_{6\bm{u}}^{n+1}\right)\right|\\[2mm]
\leq&\|\gamma_6(\bm{u}_h^{n})\|_{1,2}\|\bm{u}^{n+1}-\gamma_6(\bm{u}^n)\|_{2,2} \left\|d_{6\bm{u}}^{n+1}\right\|_{0}\\[2mm]
%\leq& C\varepsilon^{-1}\|\gamma_k(\bm{u}_h^{n})\|_{1,2}^2\|\sum_{i=1}^kb_i\int_{t^{n+1-i}}^{t^{n+1}}(t^{n+1-i}-s)^{k-1}\frac{\partial^{k} \bm{u}}{\partial t^{k}}(s)\,ds\|_{2,2}^2\\
%&+\varepsilon\|\frac{\alpha_k\eta_{\bm{u}}^{n+1}-\beta_k(\eta_{\bm{u}}^{n})}{\tau}\|_0^2\\
\leq& C\varepsilon^{-1}\tau^{11} \|\gamma_6(\bm{u}_h^{n})\|_{1,2}^2\sum_{i=1}^6\int_{t^{n+1-i}}^{t^{n+1}}\left\|\frac{\partial^{6} \bm{u}}{\partial t^{6}}(s)\right\|_{2,2}^2\,ds
+\varepsilon \left\|d_{6\bm{u}}^{n+1} \right\|_0^2,
\end{align*}
and
\begin{align*}
&\left|a\left(\gamma_6(\bm{u}_h^{n}),\gamma_6(\bm{u}^n-\bm{u}_h^{n}),d_{6\bm{u}}^{n+1}\right)\right|\\[2mm]
\leq&\|\mathcal{A}_{h}(\gamma_6(\bm{u}_h^{n}))\|_{0}\left(\|\gamma_6(\eta_{\bm{u}}^{n})\|_{1,2}+\|\gamma_6(e_{\bm{u}}^{n})\|_{1,2}\right) \left\|d_{6\bm{u}}^{n+1}\right\|_{0}\\[2mm]
\leq&C\varepsilon^{-1}\|\mathcal{A}_{h}(\gamma_6(\bm{u}_h^{n}))\|_{0}^2\left(\|\gamma_6(\eta_{\bm{u}}^n)\|_{1,2}^2+\|\gamma_6(e_{\bm{u}}^{n})\|_{1,2}^2\right)
+\varepsilon\left\|d_{6\bm{u}}^{n+1}\right\|_{0}^2.
\end{align*}
This implies 
\begin{align}
\left|\left(E_{6,1}^{n+1}, d_{6\bm{u}}^{n+1}\right)\right|\leq&4\varepsilon \left\|d_{6\bm{u}}^{n+1}\right\|_{0}^2%\nonumber\\[2mm]
+ C\varepsilon^{-1} \tau^{11}\|\bm{u}^{n+1}\|_{2,2}^2\sum_{i=1}^6\int_{t^{n+1-i}}^{t^{n+1}}\left\|\frac{\partial^{6} \bm{u}}{\partial t^{6}}(s)\right\|_{1,2}^2\,ds\nonumber\\[2mm]
&+ C\varepsilon^{-1} \tau^{11}\|\gamma_6(\bm{u}_h^{n})\|_{1,2}^2\sum_{i=1}^6 \int_{t^{n+1-i}}^{t^{n+1}}\left\|\frac{\partial^{6} \bm{u}}{\partial t^{6}}(s)\right\|_{2,2}^2\,ds\nonumber\\[2mm]
&+C\varepsilon^{-1}\left(\|\mathcal{A}_{h}(\gamma_6(\bm{u}_h^{n}))\|_{0}^2+\|\bm{u}^{n+1}\|_{2,2}^2\right)\|\gamma_6(\eta_{\bm{u}}^n)\|_{1,2}^2\nonumber\\[2mm]
&+C\varepsilon^{-1}\left(\|\mathcal{A}_{h}(\gamma_6(\bm{u}_h^{n}))\|_{0}^2+\|\bm{u}^{n+1}\|_{2,2}^2\right)\|\gamma_6(e_{\bm{u}}^{n})\|_{1,2}^2.\label{fully:esti44k6}
\end{align}
We continue to estimate
\begin{align}
\left|\left(E_{6,2}^{n+1}, d_{6\bm{u}}^{n+1}\right)\right|\leq& \varepsilon \left\|d_{6\bm{u}}^{n+1}\right\|_0^2%\nonumber\\[2mm]
 +C(\varepsilon\nu)^{-1}\tau^{-1}\tau^{13}\sum_{i=1}^6\int_{t^{n+1-i}}^{t^{n+1}}\left\|\frac{\partial^{7} \mathcal{P}_h\bm{u}}{\partial t^{7}}(s)\right\|_0^2\,ds,\label{fully:esti55k6}
 \end{align}
and
\begin{align}
&\left|\left(E_{6,3}^{n+1},d_{6\bm{u}}^{n+1}\right) \right|\leq C\varepsilon^{-1}h^{2(l+1)}\tau\int_{t^{n-5}}^{t^{n+1}}\left(\left\|\frac{\partial\bm{u}^{n+1}}{\partial t}\right\|_{l+1,2}^2+\left\|\frac{\partial p^{n+1}}{\partial t}\right\|_{l,2}^2\right)\,ds+\varepsilon\left\|d_{6\bm{u}}^{n+1}\right\|_0^2.\label{fully:esti66k6}
\end{align}
The residual terms at the three preceding time levels, $E_{6,j}^{n-\ell}$, $j=1,2,3$ and $\ell=0,1,2$, can be estimated by the same arguments as those used for $E_{6,j}^{n+1}$.  Since the BDF6 multiplier coefficients are fixed, their contributions are absorbed into the generic constant $C$.  By combining inequalities (\ref{fully:esti44k6})-(\ref{fully:esti66k6}) with (\ref{eq:k6-linear-error-bound}), we derive the following estimate
\begin{align}
  &\|\nabla\eta_{\bm u}^{m+1}\|_0^2
  +\tau(1/64-24\varepsilon)\sum_{n=5}^m\|d_{6\bm u}^{n+1}\|_0^2
  \nonumber\\
  \leq %C\tau\sum_{n=5}^{m} \bar{d}_n\|\nabla\eta_{\bm{u}}^{n}\|_{0}^2%\nonumber\\
%C\tau\sum_{n=5}^m\left(\|\mathcal{A}_{h}(\gamma_6(\bm{u}_h^n))\|_{0}^2+\|\bm{u}^{n+1}\|_{2,2}^2\right)\|\gamma_6(e_{\bm{u}}^n)\|_{1,2}^2\nonumber\\[2mm]
 &C\tau\sum_{n=5}^{m} \bar d_n\|\gamma_6(\eta_{\bm u}^{n})\|_{1,2}^2+C\tau\sum_{n=8}^{m}\sum_{\ell=0}^{2} \bar d_{n-\ell-1} \|\gamma_6(\eta_{\bm u}^{n-\ell-1})\|_{1,2}^2\nonumber\\[2mm]
&+C\tau\sum_{n=5}^{m}\bar d_n\|\gamma_6(e_{\bm u}^{n})\|_{1,2}^2+C\tau\sum_{n=8}^{m}\sum_{\ell=0}^{2} \bar d_{n-\ell-1} \|\gamma_6(e_{\bm u}^{n-\ell-1})\|_{1,2}^2\nonumber\\[2mm]
&+ C\tau^{12}\int_{0}^{T}\left(\left\|\frac{\partial^{6} \bm{u}}{\partial t^{6}}(s)\right\|_{1,2}^2+\left\|\frac{\partial^{6} \bm{u}}{\partial t^{6}}(s)\right\|_{2,2}^2+\left\|\frac{\partial^{7} \bm{u}}{\partial t^{7}}(s)\right\|_0^2\right)\,ds\nonumber\\[2mm]
&+Ch^{2(l+1)}\tau^2\int_0^T\left(\left\|\frac{\partial\bm{u}^{n+1}}{\partial t}\right\|_{l+1,2}^2+\left\|\frac{\partial p^{n+1}}{\partial t}\right\|_{l,2}^2\right)\,ds+C\sum_{i=0}^5\|\nabla\eta_{\bm{u}}^{i}\|_0^2\nonumber\\[2mm]
&+Ch^{2(l+1)}\tau^2\int_0^T\sum_{\ell=0}^{2}\left( \left\|\frac{\partial\bm u^{n-\ell}}{\partial t}\right\|_{l+1,2}^2+\left\|\frac{\partial p^{n-\ell}}{\partial t}\right\|_{l,2}^2 \right)\,ds,\label{fully:esti1222k6}
\end{align}
with $ \bar{d}_n=\|\mathcal{A}_{h}(\gamma_6(\bm{u}_h^n))\|_{0}^2+\|\bm{u}^{n+1}\|_{2,2}^2$. By invoking  Hypothesis \ref{initial value estimate} and Lemma \ref{def:gronwall2}, we derive %the following result:
\begin{align}
&\|\nabla\eta_{\bm{u}}^{m+1}\|_0^2+\tau\sum_{n=5}^{m}(1/64-24\varepsilon)\left\|\frac{\alpha_6\eta_{\bm{u}}^{n+1}-\beta_6(\eta_{\bm{u}}^{n})}{\tau}\right\|_0^2%\leq&C\tau\sum_{n=1}^md_n\Big(\|\nabla \hat{e}_{\bm{u}}^n\|_0^2+\|\nabla \bar{e}_{\bm{u}}^n\|_0^2+\|\nabla \bar{e}_{\bm{u}}^{n-1}\|_0^2+\|\nabla\tilde{e}_{\bm{u}}^{n}\|_0^2\Big)\\
%&+ CT(\tau)^4\\
\leq C_u\left(\tau^{12}+h^{2l}\right). \label{fully:esti1333k6}
\end{align}
By applying the triangle inequality on (\ref{fully:esti1333}) and (\ref{fully:esti1333k6}) together with the approximation properties of the projection operators,  the desired  result follows (\ref{space discrete : theorem 2}). 

\textbf{Step IV}: The estimate established in \textbf{Step III} and the projection error bound imply that, for $1\leq k\leq6$,
\begin{equation}
 \|\nabla(\bm u^{m+1}-\bm u_h^{m+1})\|_0 \leq C_u\left(\tau^{k}+h^{l}\right),
 \label{eq:H1-total-error-for-induction}
\end{equation}
where $C_{u}$ is independent of $m$, $h$, and $\tau$. Choose $h_0,\tau_0>0$ so that
\begin{align*}
 C_u(\tau_0^k+h_0^l)\leq1.
\end{align*}
Then, for $h\leq h_0$ and $\tau\leq\tau_0$,
\begin{align*}
 \|\nabla\bm u_h^{m+1}\|_0\leq\|\nabla\bm u(t^{m+1})\|_0
 +\|\nabla(\bm u(t^{m+1})-\bm u_h^{m+1})\|_0 \leq C_{H^1}+1<C_{\Diamond}.
\end{align*}
Thus the induction is complete, and (\ref{lemmadeltah}) follows. This concludes the proof of Theorem  \ref{theoremmain}.
\end{proof}

%\section{Error analysis of pressure}
%In order to estimate the pressure, we need to assume the higher regularity of magnetic induction $\bm{B}$. 
 \subsection{Proof of optimal error estimate for pressure}
Based on the results established in Theorem  \ref{theoremmain}, we derive the optimal error estimate for the pressure field.
%\begin{theorem}\label{space discrete : theorem 3}
%Under the same conditions as Theorem \ref{theoremmain}, with $k=1,\cdots,6$, the following estimate holds for all $m+1\leq N$
%Under the same conditions as Theorem \ref{space discrete : theorem 2}. %Then the continuous system (\ref{weak:eq1})-(\ref{weak:eq2}) and the finite element problem (\ref{weak space discrete:eq1})-(\ref{weak space discrete:eq2}) admit the unique solution $(\bm{u}^n, p^n)$ and $(\bm{u}_h^n,p_h^n)$, respectively. For all $1\leq m\leq N$, we  have the following estimate,
%\begin{equation*}
%\begin{split}
%\tau  \sum_{n=k-1}^{m}\Vert p^{n+1}-p_h^{n+1}\Vert_0^2\leq  C_{u,p}\left(h^{2l}+\tau^{2k}\right),
%\end{split}
%\end{equation*}
%where the constant $ C_{u,p}$ depends on $T,\nu,\Omega$  and the exact solution $\bm{u}, p$, but is independent
%of $\tau $ and $h$.
%\end{theorem}
%\begin{proof}
\begin{proof}[Proof of Theorem \ref{space discrete : theorem 3}] 
By analyzing the error equation for $p_h^{n+1}$,  we have
\begin{align}
(\eta_p^{n+1},\nabla\cdot\bm{v}_h)=&\left(\frac{\alpha_k{\eta}_{\bm{u}}^{n+1}-\beta_k({\eta}_{\bm{u}}^{n})}{\tau },\bm{v}_h\right)-\left( E_{k,1}+\frac{1}{\tau }E_{k,2}+E_{k,3},\bm{v}_h\right)%\nonumber\\
+\nu(\nabla{\eta}_{\bm{u}}^{n+1},\nabla\bm{v}_h).%\nonumber\\
\label{eq:errorp1}
\end{align}
By invoking the inf-sup condition (\ref{infsup}),  there holds
\begin{align}
&\chi^*\|\eta_p^{n+1}\|_0\leq \sup\limits_{\bm{0}\ne\bm{v}_h\in \bm{X}_h^{l}}\frac{(\eta_p^{n+1}, \nabla\cdot\bm{v}_h)}{\|\bm{v}_h\|_{1,2}}.\label{eq:errorp2}
\end{align}
Combining (\ref{eq:errorp1}) and (\ref{eq:errorp2}), we will continue to derive
\begin{align}
\tau \|\eta_p^{n+1}\|_0^2\leq &C\tau \left\|\frac{\alpha_k{\eta}_{\bm{u}}^{n+1}-\beta_k({\eta}_{\bm{u}}^{n})}{\tau }\right\|_0^2+C\nu^2\tau \|\nabla{\eta}_{\bm{u}}^{n+1}\|_0^2\nonumber\\[2mm]
&+C\tau \left(\|\nabla(\gamma_k(\bm{u}_h^n))\|_{0}^2+\|\bm{u}^{n+1}\|_{2,2}^2\right)\left(\|\gamma_k(\eta_{\bm{u}}^n)\|_{1,2}^2+\|\gamma_k(e_{\bm{u}}^n)\|_{1,2}^2\right)\nonumber\\[2mm]
%&+C\tau \left(\|\nabla(\gamma_k(\bm{u}_h^n))\|_{0}^2+\|\bm{u}^{n+1}\|_{2,2}^2\right)\|\gamma_k(e_{\bm{u}}^n)\|_{1,2}^2\nonumber\\
&+ C\tau^{2k}\|\bm{u}^{n+1}\|_{2,2}^2\sum_{i=1}^ko_i \int_{t^{n+1-i}}^{t^{n+1}}\left\|\frac{\partial^{k} \bm{u}}{\partial t^{k}}(s)\right\|_0^2\,ds\nonumber\\[2mm]
&+ C\tau^{2k}\|\gamma_k(\bm{u}_h^{n})\|_{1,2}^2\sum_{i=1}^ko_i \int_{t^{n+1-i}}^{t^{n+1}}\left\|\frac{\partial^{k} \bm{u}}{\partial t^{k}}(s)\right\|_{1,2}^2\,ds+C\tau^{2k}\sum_{i=1}^ko_i\int_{t^{n+1-i}}^{t^{n+1}}\left\|\frac{\partial^{k+1} \bm{u}}{\partial t^{k+1}}(s)\right\|_0^2\,ds\nonumber \\[2mm]
&+Ch^{2(l+1)} \tau^2\int_{t^{n+1-k}}^{t^{n+1}}\left(\left\|\frac{\partial\bm{u}^{n+1}}{\partial t}\right\|_{l+1,2}^2+\left\|\frac{\partial p^{n+1}}{\partial t}\right\|_{l,2}^2\right)\,ds.\label{eq:errorp3}
\end{align}
By combining (\ref{fully:esti13}), (\ref{fully:esti13k6}),   (\ref{fully:esti1333}), (\ref{fully:esti1333k6}) and (\ref{eq:errorp3}),  the following inequality is obtained
\begin{align}
&\tau  \sum_{n=k-1}^{m}\|\eta_p^{n+1}\|_0^2\leq C_{u,p}\left(\tau ^{2k}+h^{2l}\right).\label{eq:errorp4}
\end{align}
The desired result is obtained by  invoking the triangle inequality and the approximation properties of the projection operators.
\end{proof}

\section{Numerical experiments}
\label{section:numerical}

In this section, we conduct a series of  numerical experiments to validate the accuracy and stability of the proposed  IMEX‑BDF$k$ finite element scheme. For all tests, the Taylor-Hood ($P_2$-$P_1$) finite element pair is employed  to approximate  the velocity-pressure fields. The numerical implementation is based on the finite element discretization library MFEM \cite{Anderson2021} (version: 4.8.1), and all computations are performed on the LSSC-IV cluster of the State Key Laboratory of Scientific and Engineering Computing, Chinese Academy of Sciences.

\subsection{Convergence rate for temporal discretization}
\label{subsection1}
In this example, we assess the temporal accuracy of the proposed numerical scheme by considering a problem with a known exact solution.   The computational domain is chosen as  $ \Omega = [0, 1]^{3}$, with the exact solutions prescribed by 
\begin{align*}
\bm{u} = (y\,\sin^2(t), z\,\sin^2(t), x\,\sin^2(t) )^{\top}, \quad p = (x + y + z - 1.5) \sin^2(t).
\end{align*}
The boundary conditions and source terms are determined to align with
these exact solutions.  Given that the exact solutions exhibit linearity in space, the error is dominated by temporal discretization. The kinematic viscosity is set to  $\nu=1$, and the unit cube is uniformly partitioned into  $48\times 48 \times 48$  small cubes,  with each small cube subdivided into six tetrahedral elements. To assess convergence rates, we employ second- to sixth-order temporal schemes ($k=2,3,4,5,6$).   The final time is set to $T=5.0$ for $k=2, 4$, $T=7.5$ for $k=3, 5$ and $T=10.0$ for $k=6$. The time step $\tau $  is selected as $\{ T/20,\ T/40,\ T/80,\ T/160 \}$ for successive refinements.  The convergence behavior of the velocity errors in the  $H^{1}$- and $L^{2}$-norms,  along with the $L^{2}$-error of the pressure, is displayed in Figure \ref{fig:time_convergence}.  As demonstrated, this is in full agreement with the theoretical prediction and validates the temporal discretization approach adopted in this work.
%Figure \ref{fig:time_convergence} presents the convergence rates of velocity errors in the  $H^{1}$- and $L^{2}$-norms, along with the $L^{2}$-error of the pressure, for second- to fifth-order schemes.  The final time is set to $T=5.0$ for $k=2$ and  $k=4$, and $T=7.5$ for $k=3$ and  $k=5$.The time step $\tau $  is successively refined as $\{ T/20,\ T/40,\ T/80,\ T/160 \}$.  The numerical results exhibit the expected convergence rates for both velocity and pressure fields, confirming the theoretical predictions.
\begin{figure}[H]
    \centering
    \begin{subfigure}[b]{0.3\textwidth}
        \centering
        \includegraphics[width=\linewidth, height=\linewidth]{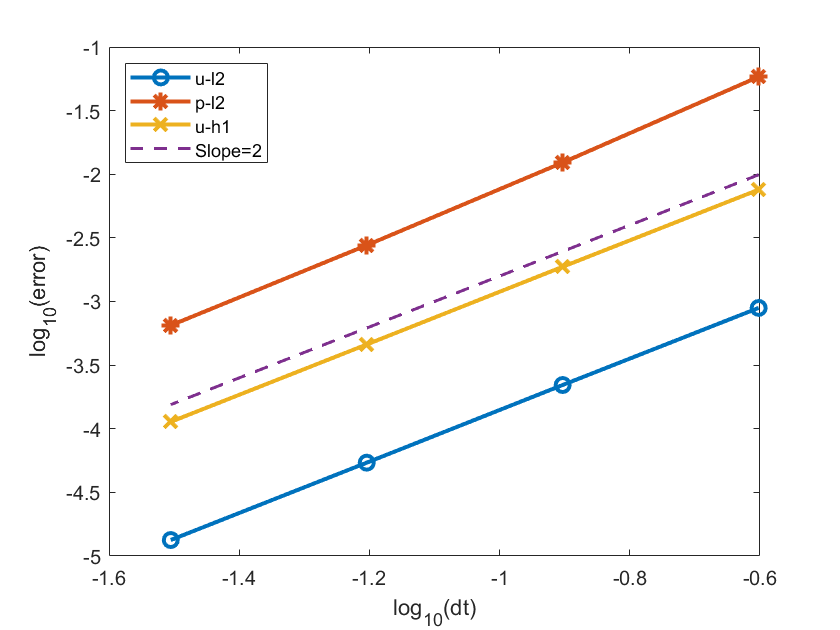}
        \caption{$k=2$}
        \label{fig:k2}
    \end{subfigure}
    \hfill
    \begin{subfigure}[b]{0.3\textwidth}
        \centering
        \includegraphics[width=\linewidth, height=\linewidth]{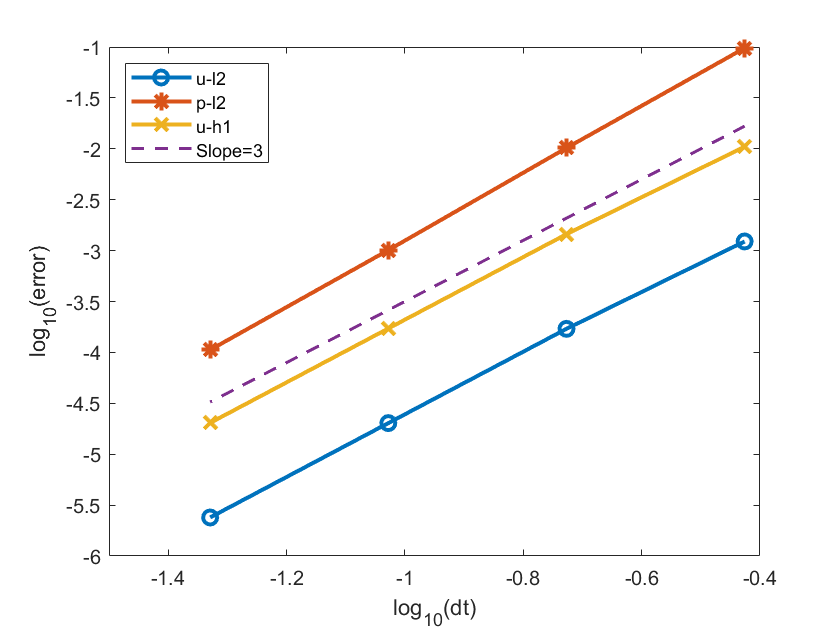}
        \caption{$k=3$}
        \label{fig:k3}
    \end{subfigure}
    \hfill
    \begin{subfigure}[b]{0.3\textwidth}
        \centering
        \includegraphics[width=\linewidth, height=\linewidth]{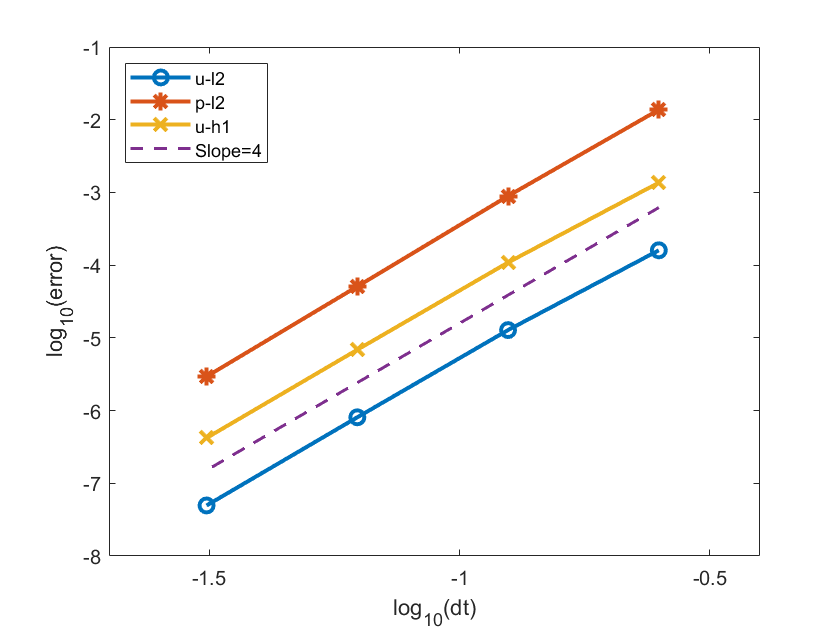}
        \caption{$k=4$}
        \label{fig:k4}
    \end{subfigure}
    \hfill
    \begin{subfigure}[b]{0.3\textwidth}
        \centering
       \includegraphics[width=\linewidth, height=\linewidth]{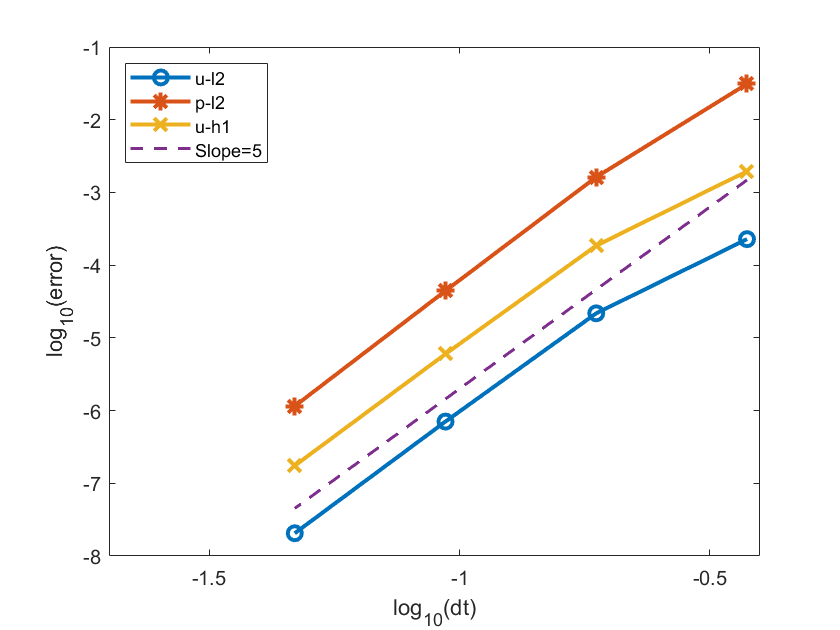}
        \caption{$k=5$}
        \label{fig:k5}
    \end{subfigure}
   %  \hfill
    \begin{subfigure}[b]{0.3\textwidth}
        \centering
        \includegraphics[width=\linewidth, height=\linewidth]{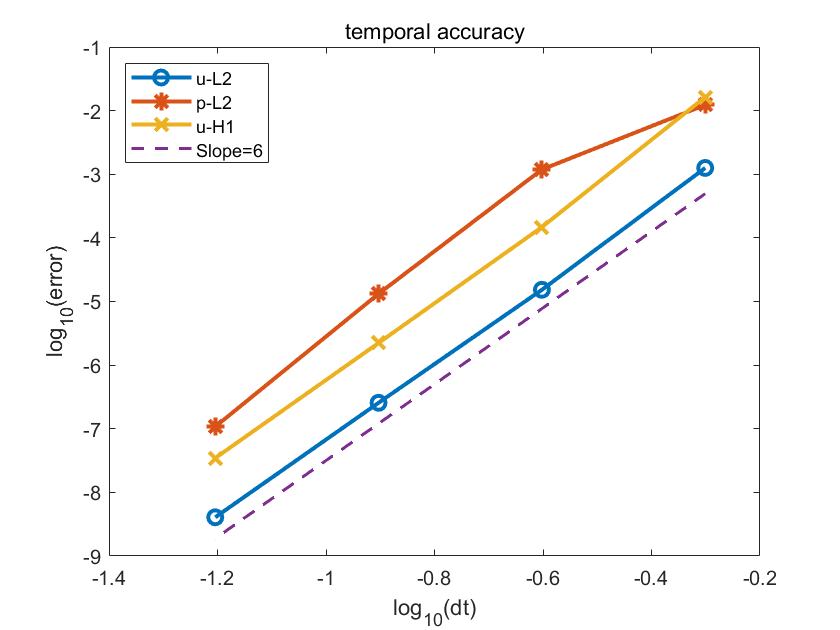}
        \caption{$k=6$}
        \label{fig:k6}
    \end{subfigure}
    \caption{Temporal convergence rates of the BDF$k$ ($k=2,3,4,5,6$)}
    \label{fig:time_convergence}
\end{figure}

\subsection{Convergence rate for spatial discretization}
%Similar to subsection \ref{subsection1}, 
This example  is designed to examine the convergence rates for spatial approximations. The computational domain is   taken as 
$ \Omega = [0, 1]^{3}$, and the physical parameter $\nu$  is set to $1$.  The true solutions are expressed as 
\begin{align*}
\bm{u}=(y(1 - y)z(1 - z)\cos(t), x(1 - x)z(1 - z)\cos(t), 0)^{\top},  \quad  p = (2x - 1)yz\cos(t).
\end{align*}
We fix the time step at $\tau  = 0.01$ to examine spatial convergence rates with respect to mesh refinement. %Table \ref{tab:convergence_rates5}, 
Table \ref{tab:convergence_rates1} and Table \ref{tab:convergence_rates2} present  the numerical errors and convergence orders for $k=1$ and $k=2$ schemes at the final time  $T=1.0$, respectively.  These results confirm that all schemes attain  the theoretical convergence orders.  When evaluating spatial accuracy, we fix $\tau$ sufficiently small to ensure error stabilization under temporal refinement, meaning the spatial error dominates the temporal error. At this point, using higher-order temporal schemes, the $\tau^{k}$ term decreases as the order $k$ increases, leading to an overall error that remains nearly unchanged. Consequently, the computed errors and convergence orders for $k=3,4,5,6$ are almost identical to those for $k=2$ and are omitted for brevity. %For brevity, these results are not shown here.

%For $k=3$ and $k=4$ cases, the error values under current settings  differ from those of $k=2$ and $k=5$ schemes mostly beyond the third decimal place. This  behavior arises  because the sufficiently small time step renders temporal errors dominated by spatial errors, where the $(\tau )^{k}$ term decays  rapidly with increasing $k$. Consequently, the error reduction becomes indistinguishable after rounding, yielding convergence orders virtually identical to the $k=2$ and $k=5$ cases. These results are omitted here due to space constraints.

%\begin{table}[H]
 %   \centering
  % % \caption{Spatial convergence order at \( t = T =1.0\)}
  %  \label{tab:convergence_rates}
   % %\begin{subtable}[t]{\textwidth}
%        \centering
 %       \caption{Spatial convergence order at \( T =1.0\), \( k = 5 \)}
 %       \label{tab:convergence_rates5}
 %       \begin{tabular}{@{}ccccc@{}}
  %          \toprule
  %          \( h \) & 0.281 & 0.140 & 0.070 & 0.035 \\ \midrule
  %          \( \|u - u_h\|_0 \) & 1.87E-04 & 2.40E-05 & 3.02E-06 & 3.79E-07 \\
 %           \textit{order} & - & 2.96 & 2.98 & 2.98 \\
  %          \( \|u - u_h\|_{1,2} \) & 6.36E-03 & 1.61E-03 & 4.05E-04 & 1.01E-04 \\
   %         \textit{order} & - & 1.98 & 1.99 & 1.99 \\
   %         \( \|p - p_h\|_0 \) & 4.23E-03 & 1.05E-03 & 2.61E-04 & 6.52E-05 \\
   %         \textit{order} & - & 2.02 & 2.01 & 2.01 \\ \bottomrule
   %     \end{tabular}
  %  %\end{subtable}
  %  %\vspace{1em} % 添加一些垂直间距
   % \end{table}

\begin{table}[H]
%  \begin{subtable}[t]{\textwidth}
        \centering
       % \label{tab:convergence_rates1}
        \begin{tabular}{@{}ccccc@{}}
            \toprule
            $h$ & 0.281 & 0.140 & 0.070 & 0.035 \\ \midrule
            $\|\bm{u}(T) - \bm{u}_h^N\|_0 $ & 1.87E-04 & 2.40E-05 & 3.05E-06 & 5.78E-07 \\
            \textit{order} & - - & 2.97 & 2.97 & 2.80 \\
            $\|\bm{u}(T) - \bm{u}_h^N\|_{1,2} $ & 6.36E-03 & 1.61E-03 & 4.06E-04 & 1.02E-04 \\
            \textit{order} & -  -& 1.98 & 1.99 & 1.99 \\
            $ \|p(T) - p_h^N\|_0 $ & 4.24E-03 & 1.05E-03 & 2.65E-04 & 7.77E-05 \\
            \textit{order} & - - & 2.02 & 2.00 & 1.93 \\ \bottomrule
        \end{tabular}
        \vspace{-0.5em} 
         \caption{Spatial convergence order at $T =1.0$, $k = 1$}
          \label{tab:convergence_rates1}
   % \end{subtable}
\end{table}
\vspace{-2em}   % 或者 -0.5\baselineskip、-5pt 等，根据实际效果微调
\begin{table}[H]
 %   \begin{subtable}[t]{\textwidth}
        \centering
%        \caption{Spatial convergence order at \( T =1.0\), \( k = 2 \)}
       % \label{tab:convergence_rates2}
        \begin{tabular}{@{}ccccc@{}}
            \toprule
            $h $ & 0.281 & 0.140 & 0.070 & 0.035 \\ \midrule
            $ \|\bm{u}(T) - \bm{u}_h^N\|_0 $ & 1.87E-04 & 2.40E-05 & 3.02E-06 & 3.79E-07 \\
            \textit{order} & -  -& 2.96 & 2.98 & 2.98 \\
            $ \|\bm{u}(T) - \bm{u}_h^N\|_{1,2} $ & 6.36E-03 & 1.61E-03 & 4.05E-04 & 1.02E-04 \\
            \textit{order} & - -& 1.98 & 1.99 & 1.99 \\
            $\|p(T) - p_h^N\|_0 $ & 4.23E-03 & 1.05E-03 & 2.61E-04 & 6.53E-05 \\
            \textit{order} & - - & 2.02 & 2.01 & 2.01 \\ \bottomrule
        \end{tabular}
        \vspace{-0.5em} 
         \caption{Spatial convergence order at $T =1.0$, $k = 2$}
           \label{tab:convergence_rates2}
  %  \end{subtable}
  %  \vspace{1em} % 添加一些垂直间距
  \end{table}
    \vspace{-1em} 
\subsection{Double shear layer problem}

We consider the  classical double shear layer problem for the Navier-Stokes equations in the domain $\Omega = [0,1]^2$, % \times (0,1)$, % subject to periodic boundary conditions, 
with the initial condition  given by
\begin{align*}
u_1(x,y,0) &= 
\begin{cases} 
\tanh(\rho(y - 0.25)), & y \leq 0.5, \\
\tanh(\rho(0.75 - y)), & y > 0.5,
\end{cases} \qquad
u_2(x,y,0) = \zeta \sin(2\pi x),
\end{align*}
where $\rho$ determines the slope of the shear layer and $\zeta$ represents the size of the perturbation. In all simulations, we set $\zeta = 0.05$ and $\bm{f}=\bm{0}$.
%The perturbation amplitude is fixed on $\zeta = 0.05$ and the external body force is taken as $\bm{f}=\bm{0}$ in our simulations.

To evaluate the performance of the high-order numerical scheme in capturing complex flow structures, we first simulate the thick shear layer problem using the BDF$4$ and BDF$6$ schemes  with physical parameters $\rho = 30.0$ and $\nu = 0.0001$. Spatial discretization employs a $128 \times 128$ uniform grid and the time step is set to $\tau  = 5 \times 10^{-4}$ for $k=4$ and $\tau  = 1.25 \times 10^{-4}$ for $k=6$.  The simulation is advanced until the final time $T = 1.2$, capturing the temporal evolution of the thick shear layer flow.  Figure \ref{fig:sequence} and Figure \ref{fig:sequence1}  present the snapshots of vorticity
at different times for the thick shear layer. %As shown, the vorticity contours obtained with the high-order scheme closely agree with those reported in the literature using the Fourier pseudo-spectral method \cite{Jicf2024}.  %Notably, the high-order scheme achieves comparable accuracy without requiring explicit decoupling strategies, underscoring its robustness in capturing fine-scale flow features. This highlights the method's potential for high-fidelity simulations of transitional and turbulent flows.
%\clearpage % 确保从新页面开始

%\vspace{-1em} 
\begin{figure}[H]
    \centering
    \begin{subfigure}{0.3\textwidth}
        \includegraphics[width=\linewidth]{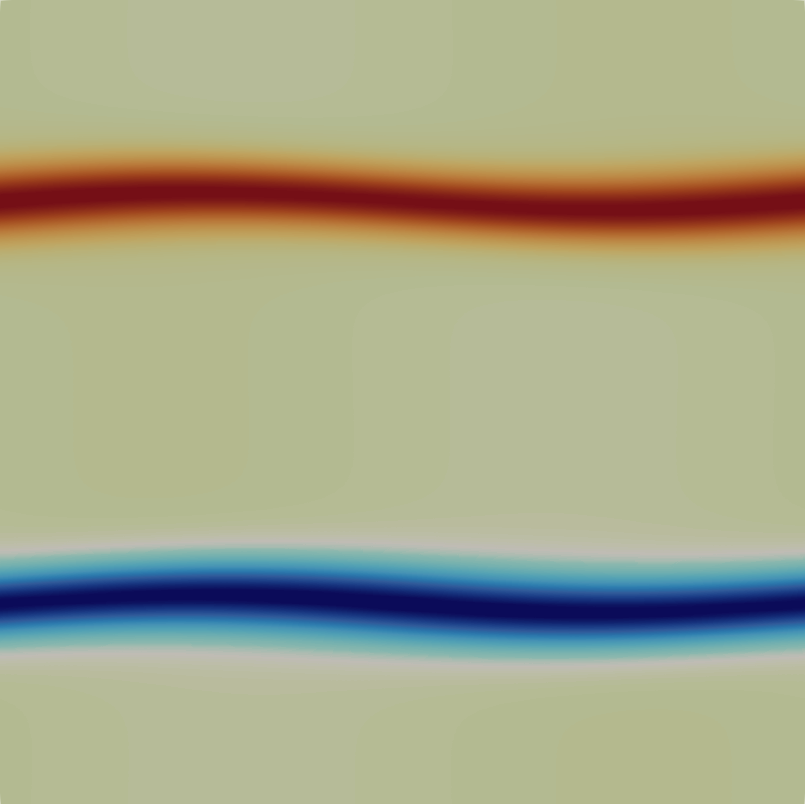}
        \caption{$t = 0.2$}
        \label{fig:sub1}
    \end{subfigure}
    \hfill
    \begin{subfigure}{0.3\textwidth}
        \includegraphics[width=\linewidth]{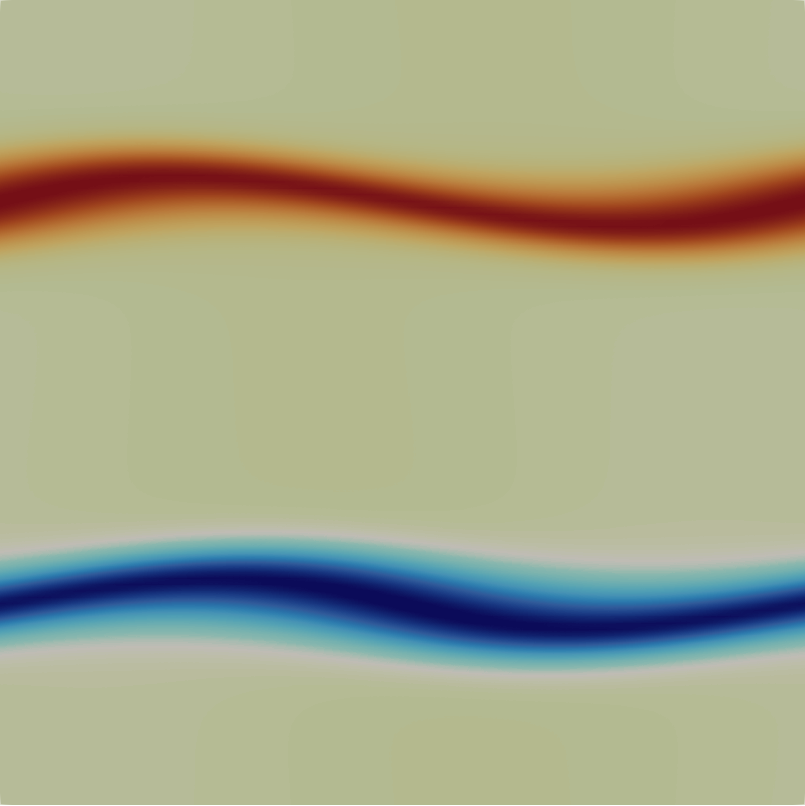}
        \caption{$t = 0.4$}
        \label{fig:sub2}
    \end{subfigure}
    \hfill
    \begin{subfigure}{0.3\textwidth}
        \includegraphics[width=\linewidth]{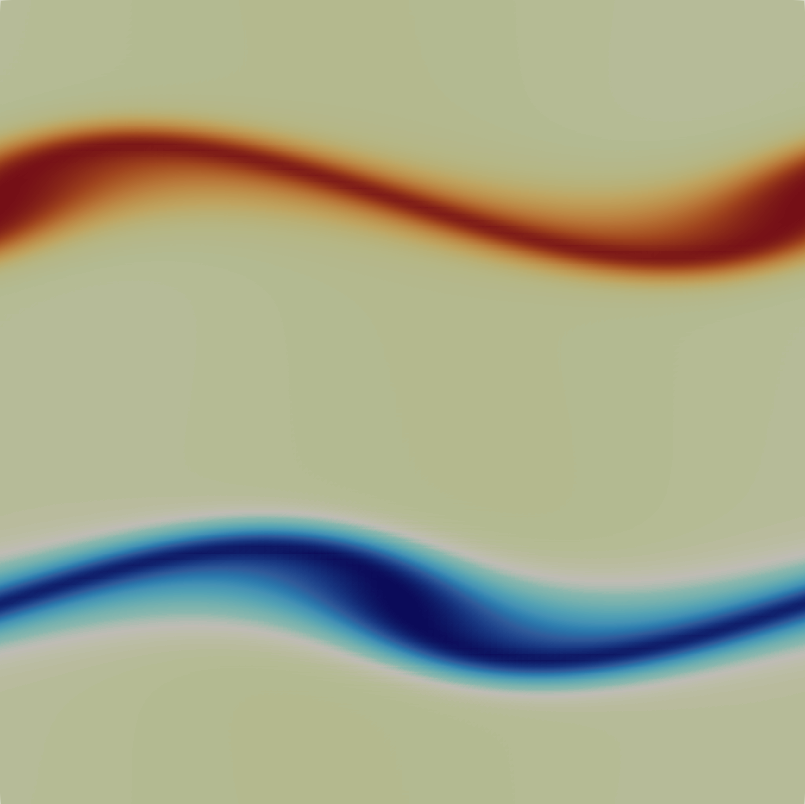}
        \caption{$t = 0.6$}
        \label{fig:sub3}
    \end{subfigure}
    \vspace{-0.2em} 
       % \vspace{0.5cm} % Add some vertical space between rows
    \begin{subfigure}{0.3\textwidth}
        \includegraphics[width=\linewidth]{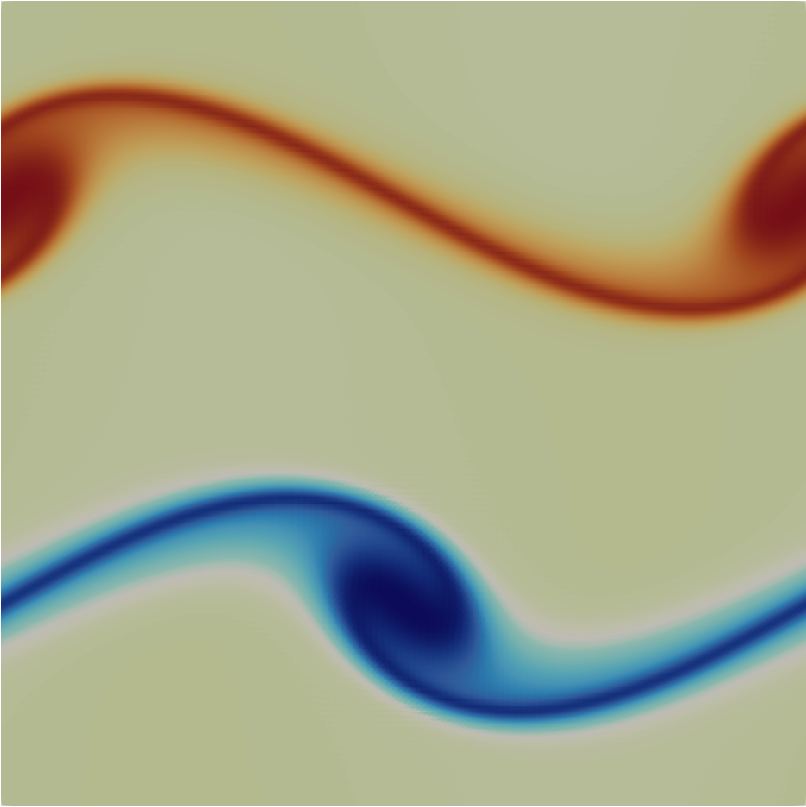}
        \caption{$t = 0.8$}
        \label{fig:sub4}
    \end{subfigure}
    \hfill
    \begin{subfigure}{0.3\textwidth}
        \includegraphics[width=\linewidth]{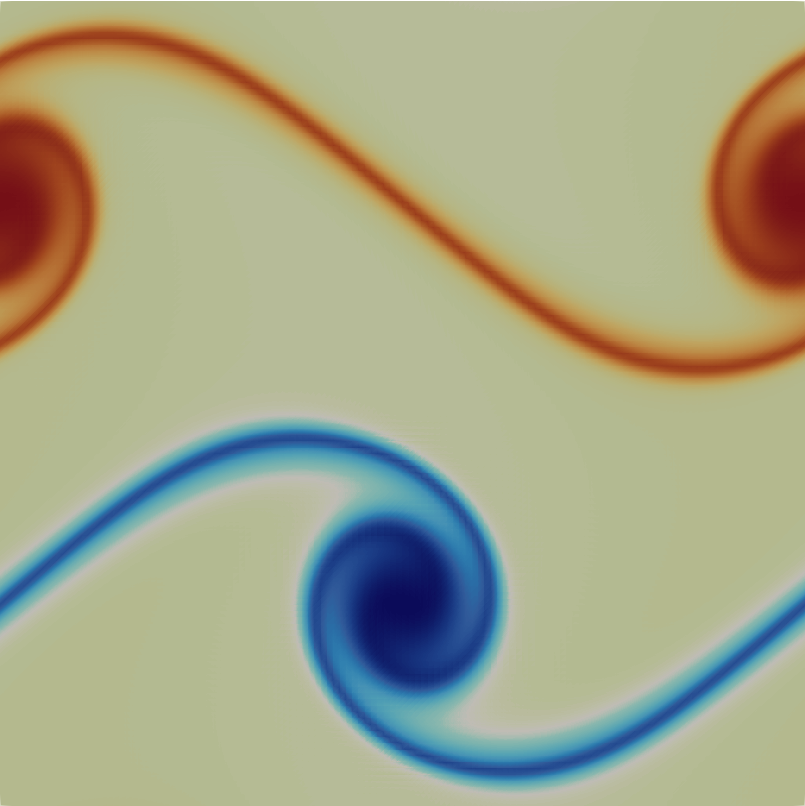}
        \caption{$t = 1.0$}
        \label{fig:sub5}
    \end{subfigure}
    \hfill
    \begin{subfigure}{0.3\textwidth}
        \includegraphics[width=\linewidth]{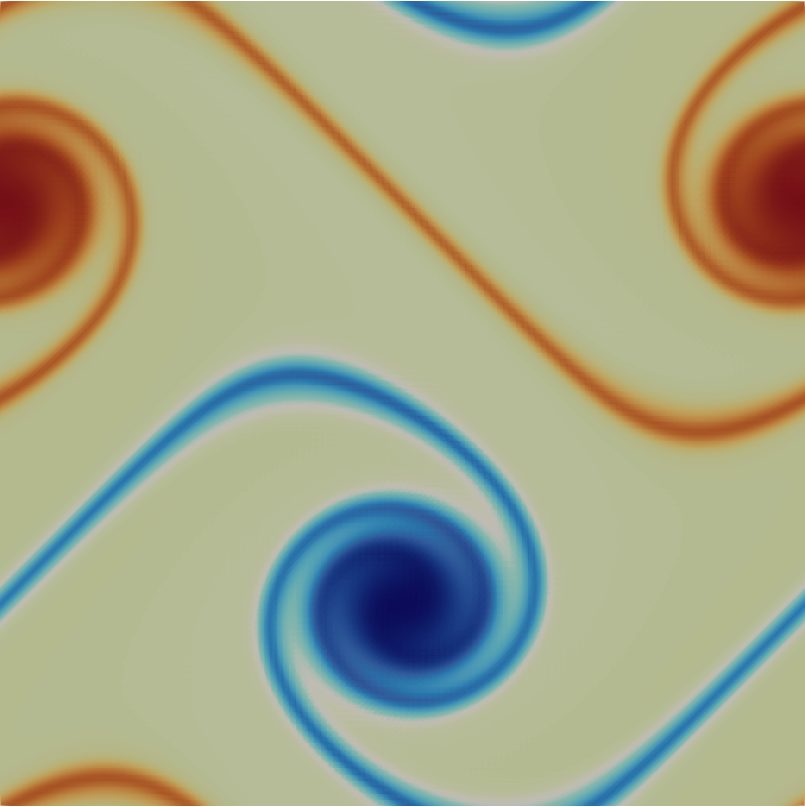}
        \caption{$t = 1.2$}
        \label{fig:sub6}
    \end{subfigure}
    \vspace{-0.5em} 
    \caption{Snapshots of vorticity for the thick shear layer problem computed by using the BDF$4$ scheme with $\nu = 0.0001$ at different times.}
    \label{fig:sequence}
\end{figure}
\vspace{-2em} 
\begin{figure}[H]
    \centering
    \begin{subfigure}[b]{0.3\textwidth}
        \includegraphics[width=\linewidth, height=\linewidth]{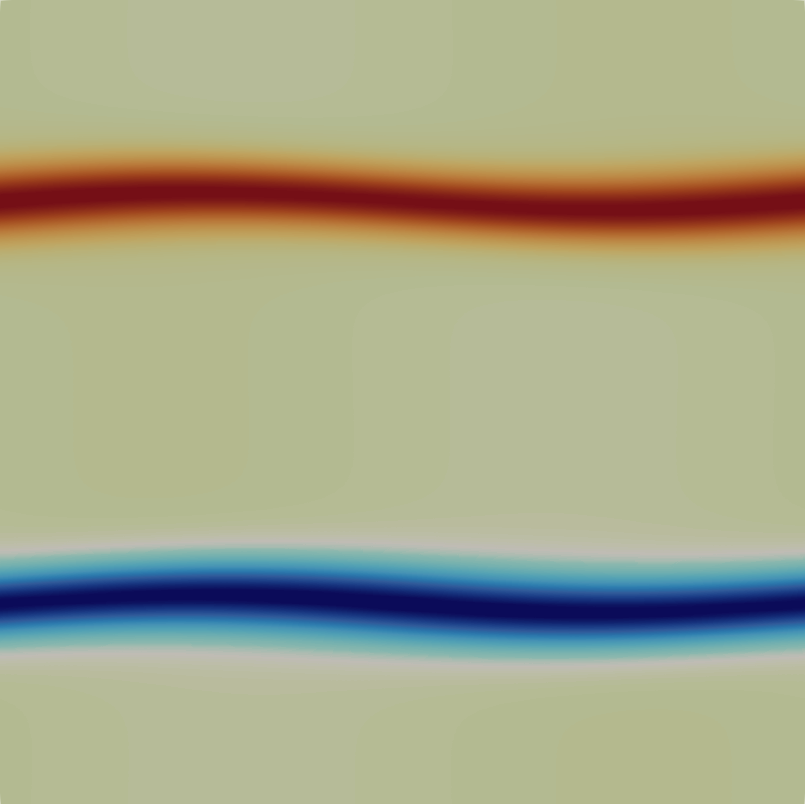}
        \caption{$t = 0.2$}
        \label{fig:thick_sub1}
    \end{subfigure}
    \hfill
    \begin{subfigure}[b]{0.3\textwidth}
        \includegraphics[width=\linewidth, height=\linewidth]{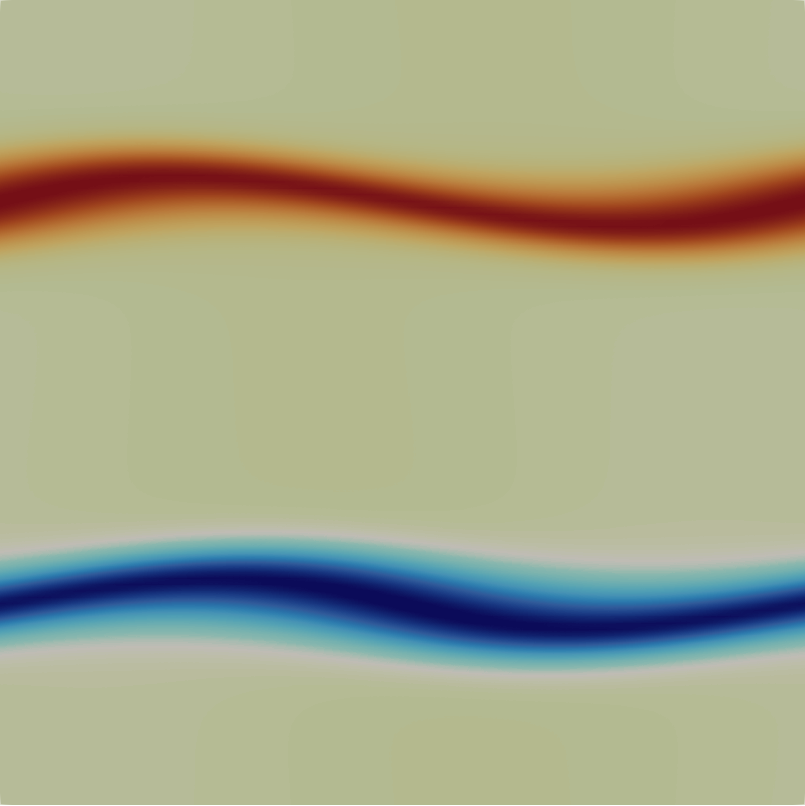}
        \caption{$t = 0.4$}
        \label{fig:thick_sub2}
    \end{subfigure}
    \hfill
    \begin{subfigure}[b]{0.3\textwidth}
        \includegraphics[width=\linewidth, height=\linewidth]{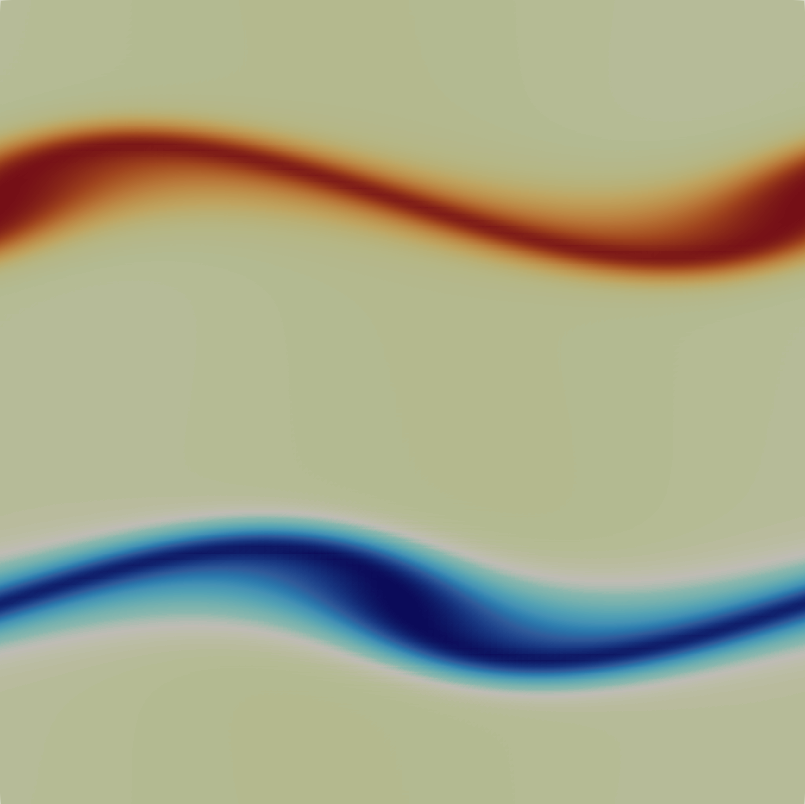}
        \caption{$t = 0.6$}
        \label{fig:thick_sub3}
    \end{subfigure}
    \vspace{-0.2em} 
        \begin{subfigure}[b]{0.3\textwidth}
        \includegraphics[width=\linewidth, height=\linewidth]{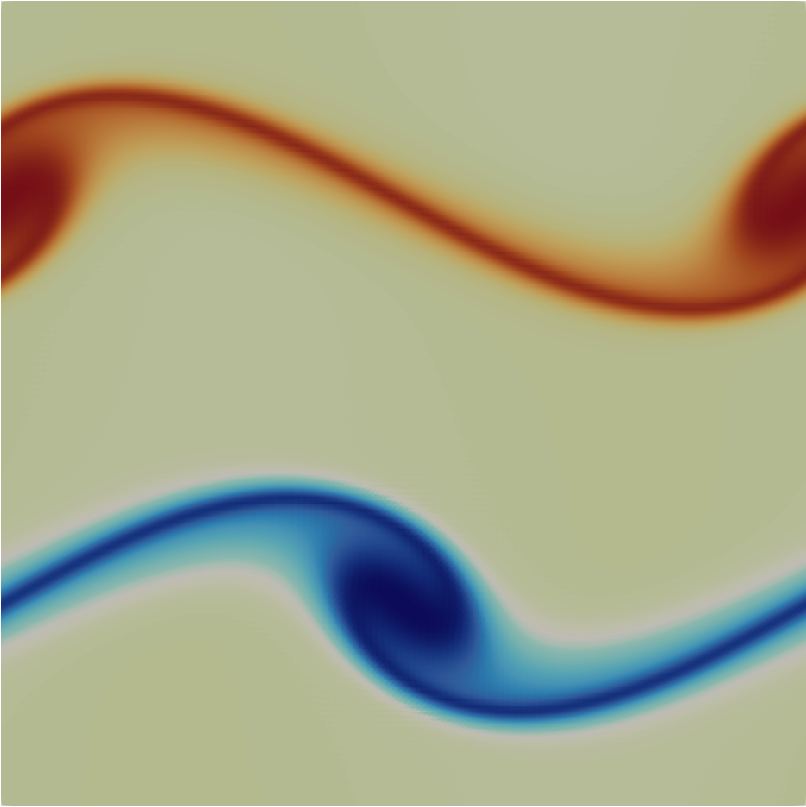}
        \caption{$t = 0.8$}
        \label{fig:thick_sub4}
    \end{subfigure}
    \hfill
    \begin{subfigure}[b]{0.3\textwidth}
        \includegraphics[width=\linewidth, height=\linewidth]{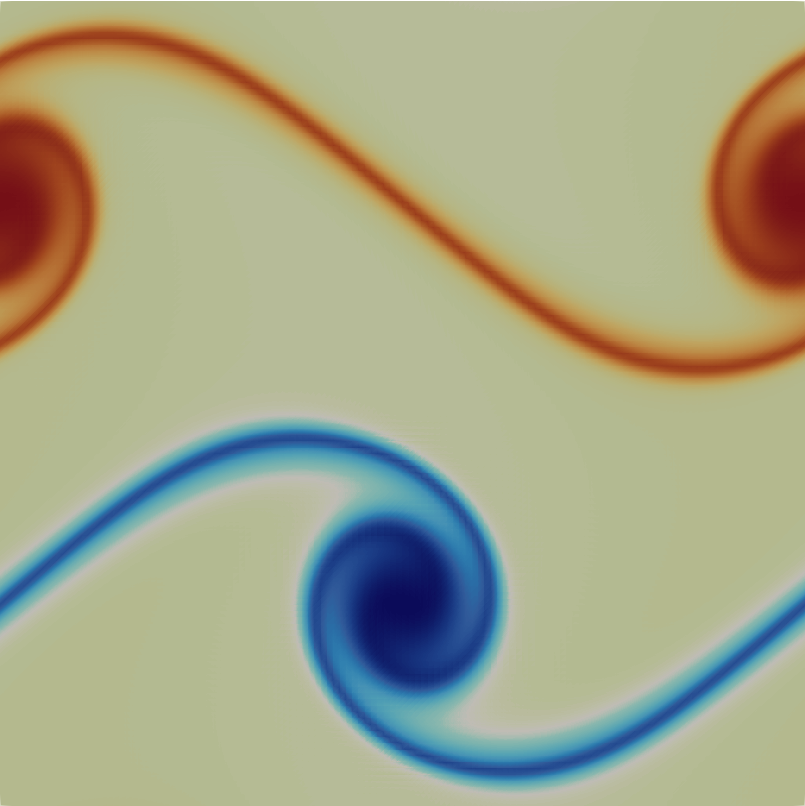}
        \caption{$t = 1.0$}
        \label{fig:thick_sub5}
    \end{subfigure}
    \hfill
    \begin{subfigure}[b]{0.3\textwidth}
        \includegraphics[width=\linewidth, height=\linewidth]{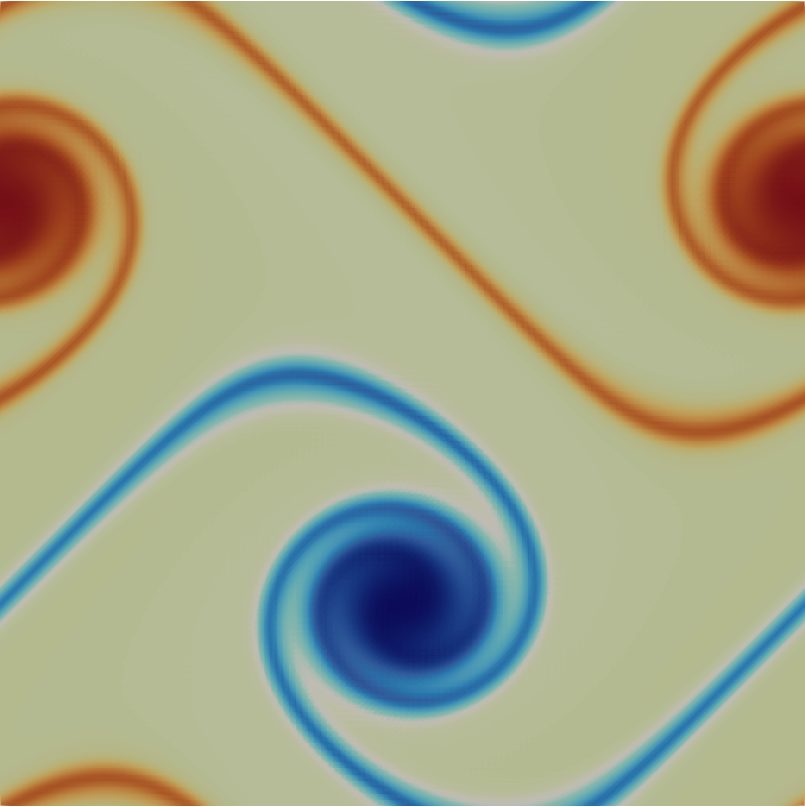}
        \caption{$t = 1.2$}
        \label{fig:thick_sub6}
    \end{subfigure}
    \vspace{-0.5em} 
\caption{Snapshots of vorticity for the thick shear layer problem computed by using the BDF$6$ scheme with $\nu$ = 0.0001 at different times.}
    \label{fig:sequence1}
    \end{figure}
% \vspace*{\fill} % Push the following text to the bottom of the page
% aaaaaaaaaaaaaaaaaaaaaaaaaa
% \vspace*{\fill} % Fill the remaining space

To further assess the performance differences between high-order and low-order numerical schemes in high Reynolds number   flows with $\nu=0.0001$,   simulations are carried out using the  first-order  temporal discretization ($k= 1$) with the time step $ \tau  = 5.0 \times 10^{-4}$. As shown in Figure \ref{fig:res_norm}, the linear system solver for  $ k = 1 $ scheme exhibits numerical instability and  blows up at approximately $t = 0.6$. To investigate the limitations of the first-order scheme, the time step is reduced to $ \tau  = 2.5 \times 10^{-4} $, and computations are performed at  $ t = 1.2 $ and $ t = 1.4 $ for $k=1$. For comparison, the same  scenarios are simulated using the fourth-order scheme ($k=4$)  with $ \tau  = 5.0 \times 10^{-4} $. As shown in Figure \ref{fig:Comparison}, the low-order scheme fails to   yield physically meaningful  solutions even with halved temporal resolution, whereas the high-order scheme maintains robust convergence and accuracy. This demonstrates that for high-Reynolds-number flows with complex structures, high-order schemes outperform low-order ones, since significantly reduced time steps are required to obtain correct solutions with low-order schemes. Furthermore, high-order schemes exhibit superior stability to low-order schemes at high Reynolds numbers.
%\vspace{-0.8em}
\begin{figure}[H]
\centering
%\hspace{-0.1\linewidth} % 向左偏移10%的页面宽度
\includegraphics[width=0.9\linewidth]{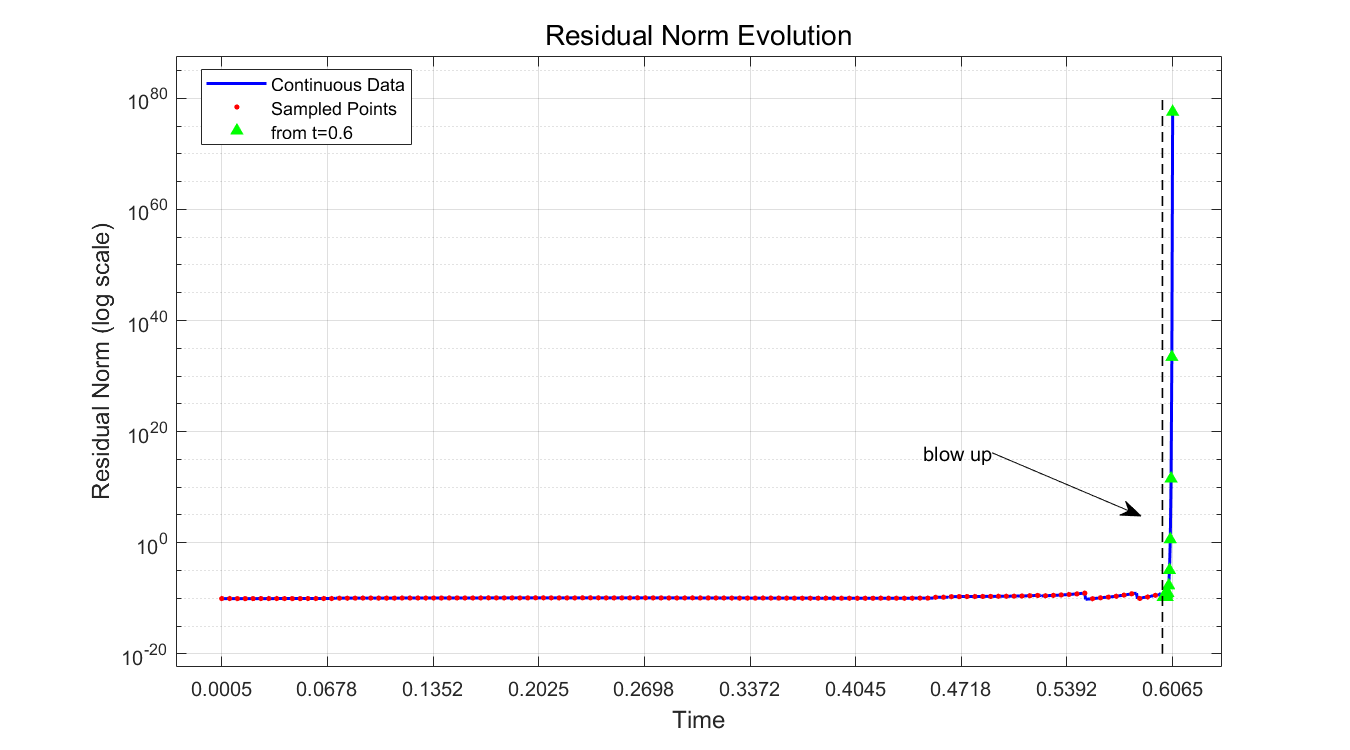}
%\vspace{-1em}
\caption{Residual variation over time for FGMRES solver, $k=1$, $\tau =5 \times 10^{-4}$}
\label{fig:res_norm}
\end{figure}

%\vspace{-1.5em}
\begin{figure}[H]
    \centering
    \begin{subfigure}[b]{0.49\textwidth}
 %       \centering
        \includegraphics[width=\linewidth, height=\linewidth]{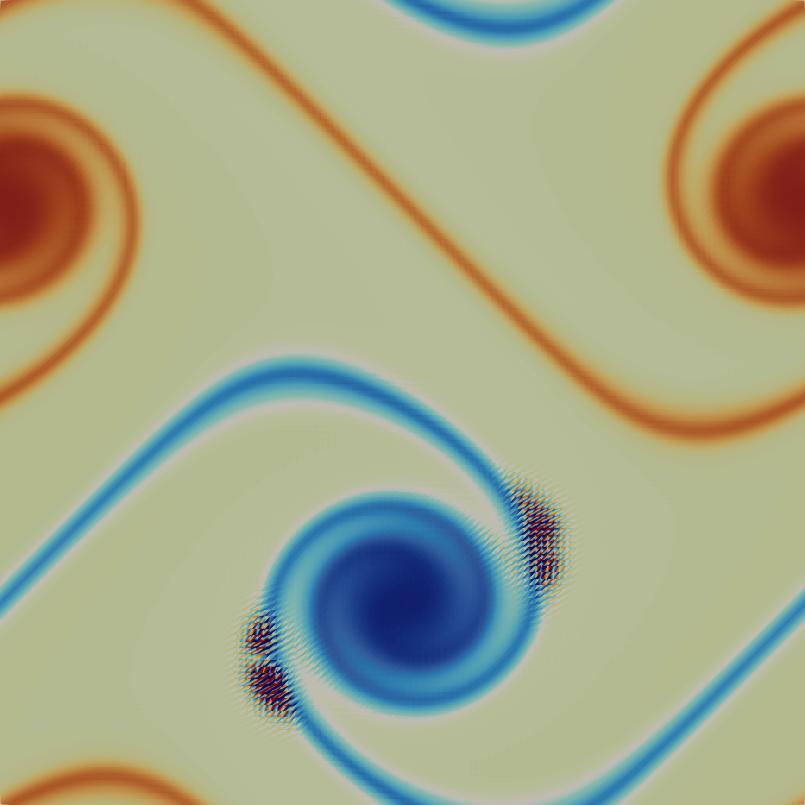} % 第一张图片缩放90%
        \caption{$k=1,t=1.2$, $\tau =2.5 \times 10^{-4}$}
        \label{fig:k200}
    \end{subfigure}
   \hfill
    \begin{subfigure}[b]{0.49\textwidth}
%        \centering
         \includegraphics[width=\linewidth, height=\linewidth]{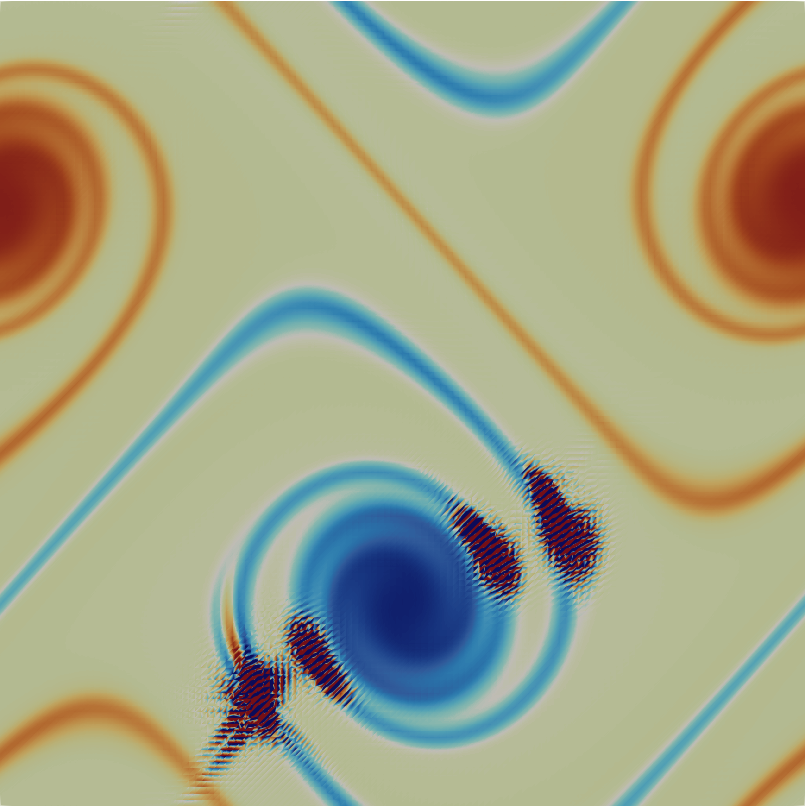}  % 第二张图片缩放90%
        \caption{$k=1,t=1.4$,$\tau =2.5 \times 10^{-4}$}
        \label{fig:k300}
    \end{subfigure} 
  %   \vspace{-0.5em}
    \vspace{0.5em}  % 添加行间距
    \hfill
    \begin{subfigure}[b]{0.49\textwidth}
%        \centering
        \includegraphics[width=\linewidth, height=\linewidth]{A12.png} % 第三张图片缩放90%
        \caption{$k=4,t=1.2$, $\tau =5 \times 10^{-4}$}
        \label{fig:k400}
    \end{subfigure}
      \hfill
  %  \vspace{-1em}  
    \begin{subfigure}[b]{0.49\textwidth}
%        \centering
        \includegraphics[width=\linewidth, height=\linewidth]{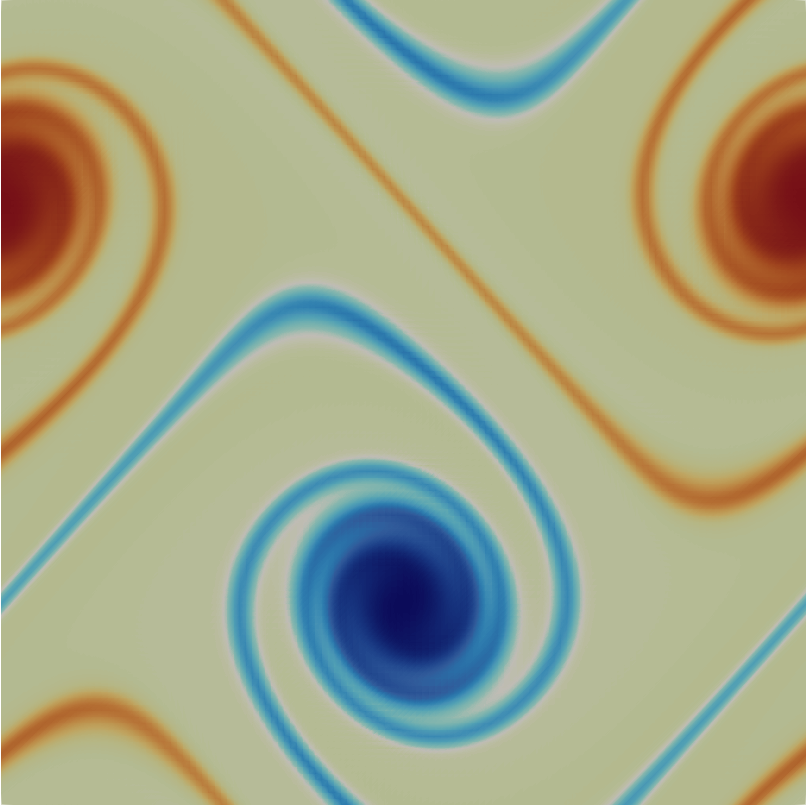}  % 第四张图片缩放90%
        \caption{$k=4,t=1.4$, $\tau =5 \times 10^{-4}$}
        \label{fig:k500}
    \end{subfigure}
 %   \vspace{-0.3em}
    \caption{Comparison of vorticity contours between high-order and low-order schemes. The solution obtained using the low-order scheme is inaccurate.}
    \label{fig:Comparison}
\end{figure}
%\vspace{-1em}

Finally, to evaluate  the capability of the high-order scheme in resolving fine-scale structures at high Reynolds numbers, we conduct simulations of the thin shear layer problem using BDF$4$  and BDF$6$  temporal discretizations with physical parameters $ \rho = 100.0 $ and $ \nu = 0.00005$. The spatial discretization is performed on a $256 \times 256$ uniform grid, and the time step is set to $\tau =1.25 \times 10^{-4}$ for $k=4$ and $\tau =5 \times 10^{-5}$ for $k=6$. As illustrated in Figure \ref{fig:sequence10} and Figure \ref{fig:sequence20}, the vorticity contours produced by the proposed method exhibit good agreement with those reported in the literature \cite{HuangFukeng2021SIAM}. These findings confirm the robustness and accuracy of the high-order algorithm in capturing complex flow dynamics under extreme Reynolds number conditions.
\vspace{-0.8\baselineskip}
\begin{figure}[H]
    \centering
    \begin{subfigure}{0.3\textwidth}
        \includegraphics[width=\linewidth]{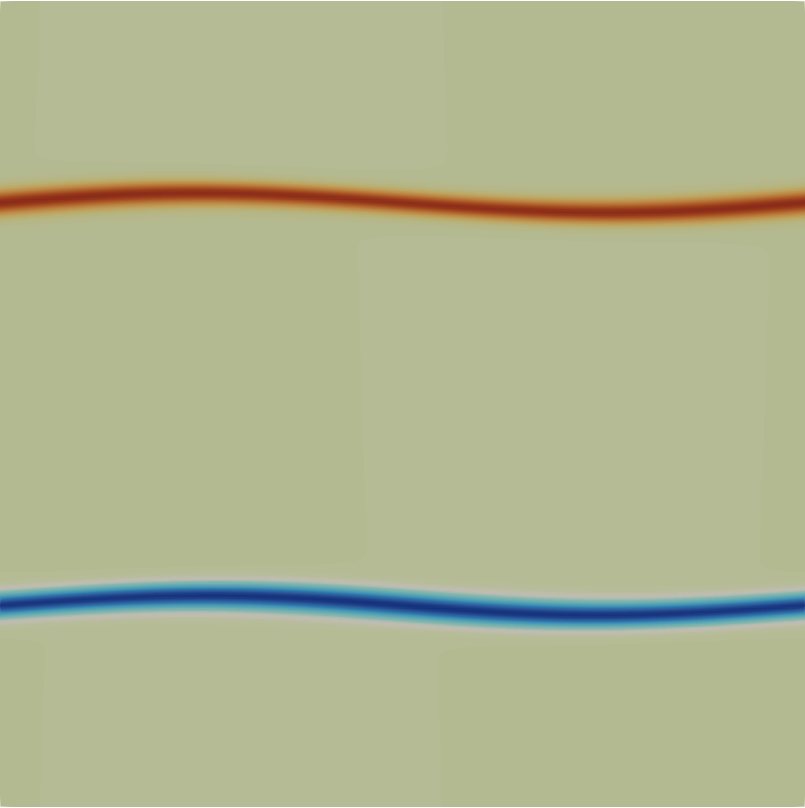}
        \caption{$t = 0.2$}
        \label{fig:sub100}
    \end{subfigure}
    \hfill
    \begin{subfigure}{0.3\textwidth}
        \includegraphics[width=\linewidth]{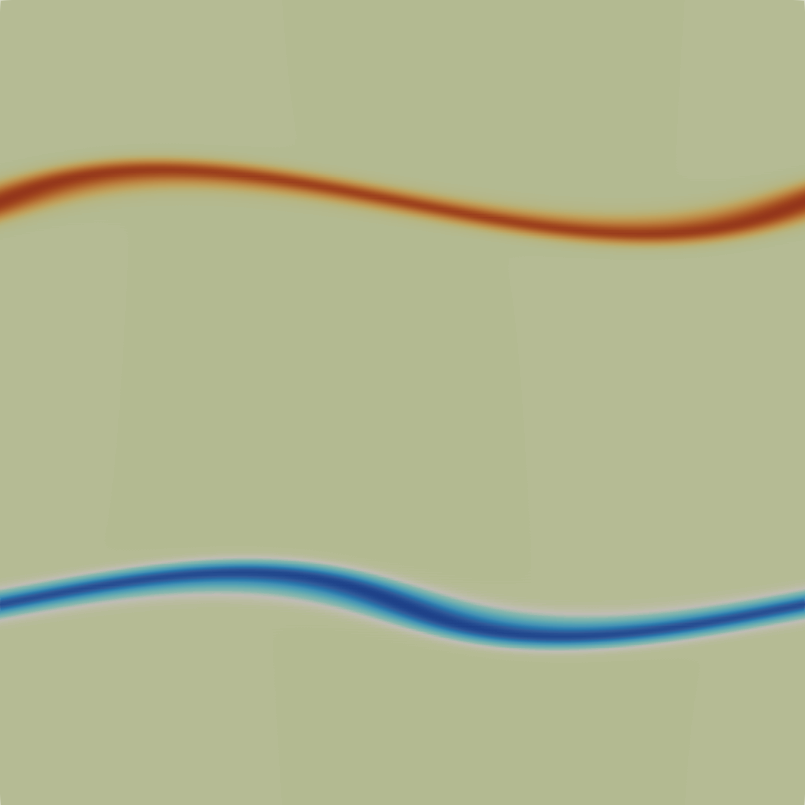}
        \caption{$t = 0.4$}
        \label{fig:sub200}
    \end{subfigure}
    \hfill
    \begin{subfigure}{0.3\textwidth}
        \includegraphics[width=\linewidth]{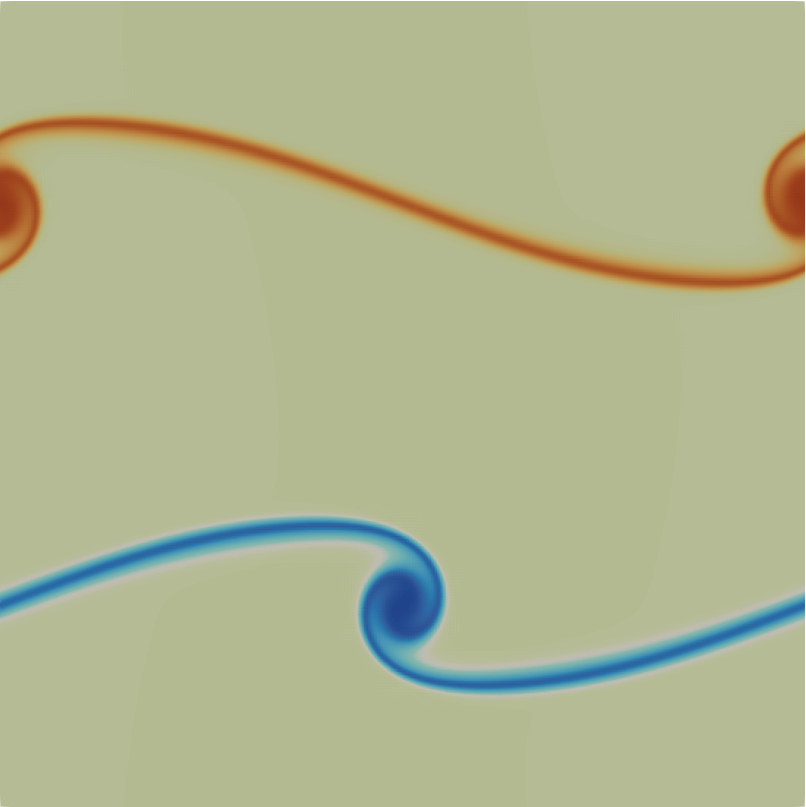}
        \caption{$t = 0.6$}
        \label{fig:sub300}
    \end{subfigure}
    
   % \vspace{0.5cm} % Add some vertical space between rows
    \vspace{-0.2em} 
    
    \begin{subfigure}{0.3\textwidth}
        \includegraphics[width=\linewidth]{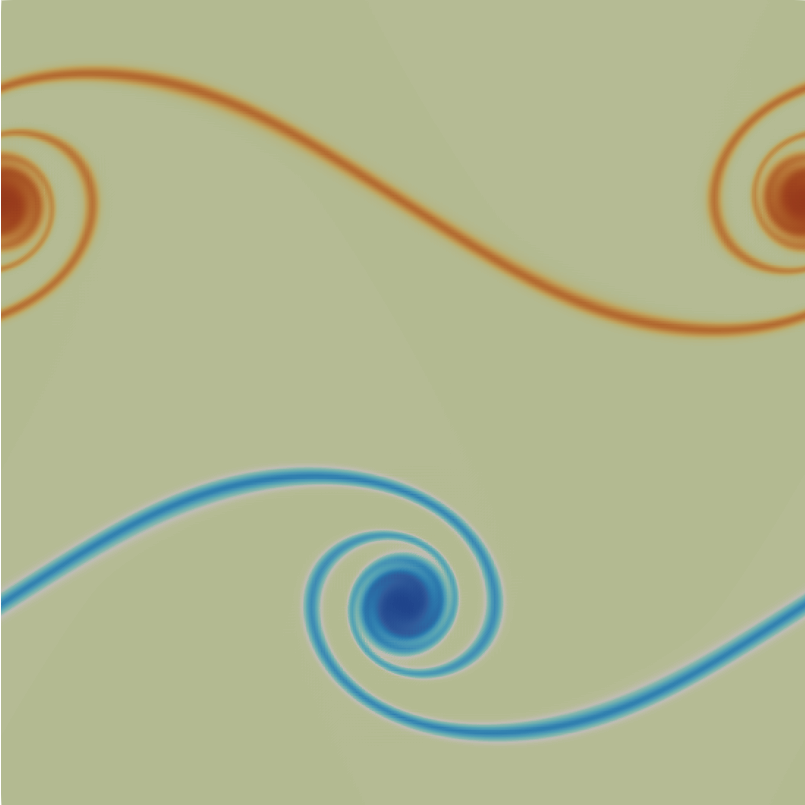}
        \caption{$t = 0.8$}
        \label{fig:sub400}
    \end{subfigure}
    \hfill
    \begin{subfigure}{0.3\textwidth}
        \includegraphics[width=\linewidth]{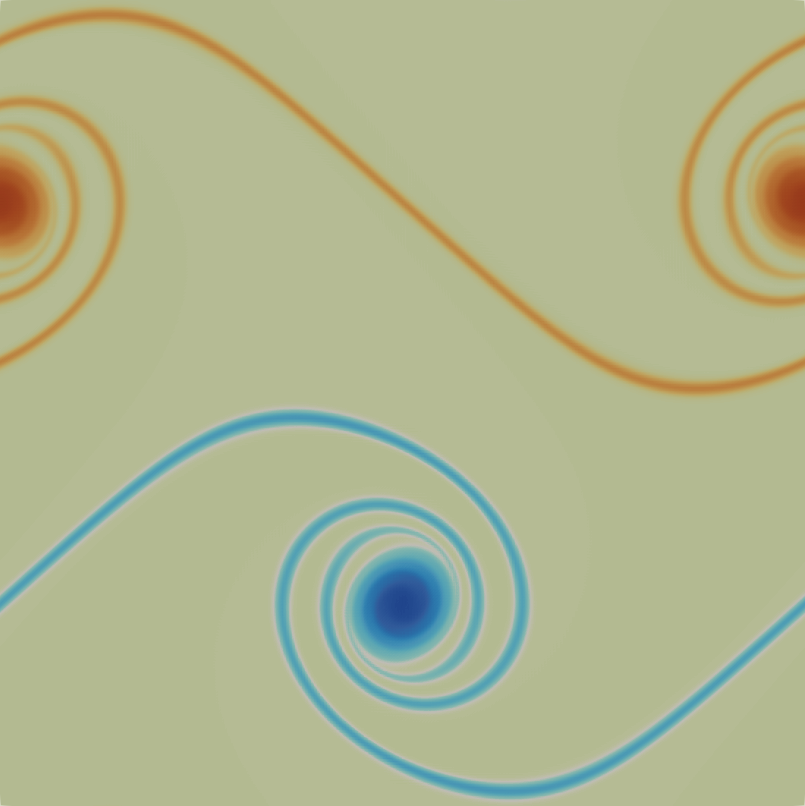}
        \caption{$t = 1.0$}
        \label{fig:sub500}
    \end{subfigure}
    \hfill
    \begin{subfigure}{0.3\textwidth}
        \includegraphics[width=\linewidth]{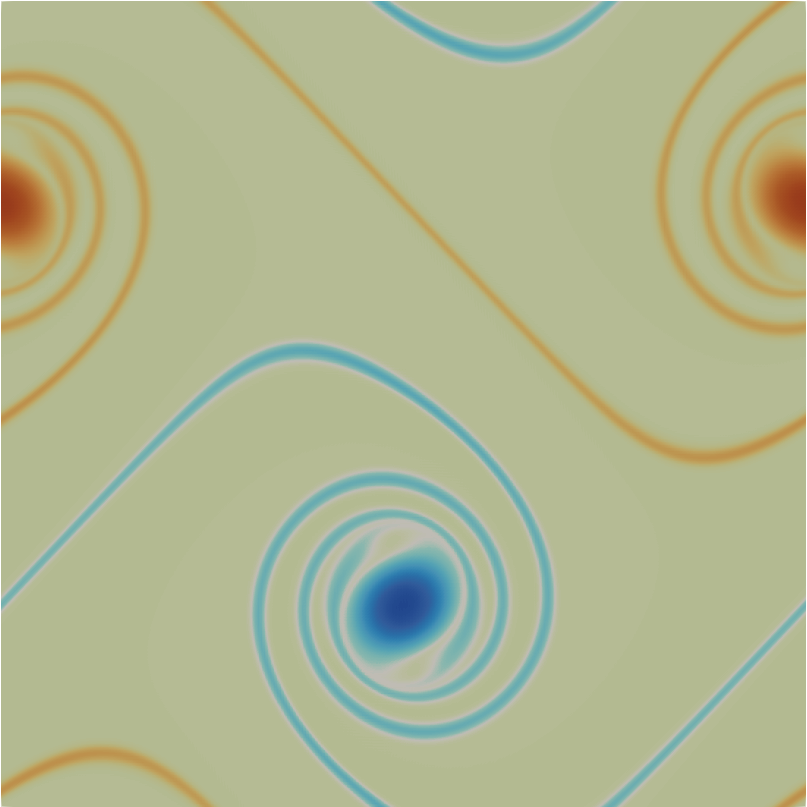}
        \caption{$t = 1.2$}
        \label{fig:sub600}
    \end{subfigure}
    \vspace{-0.2em} 
    \caption{Snapshots of vorticity for the thin shear layer problem computed by using the BDF$4$ scheme with $\nu$ = 0.00005 at different times.}
    \label{fig:sequence10}
\end{figure}
\begin{figure}[H]
    \centering
\begin{subfigure}[b]{0.3\textwidth}
        \includegraphics[width=\linewidth, height=\linewidth]{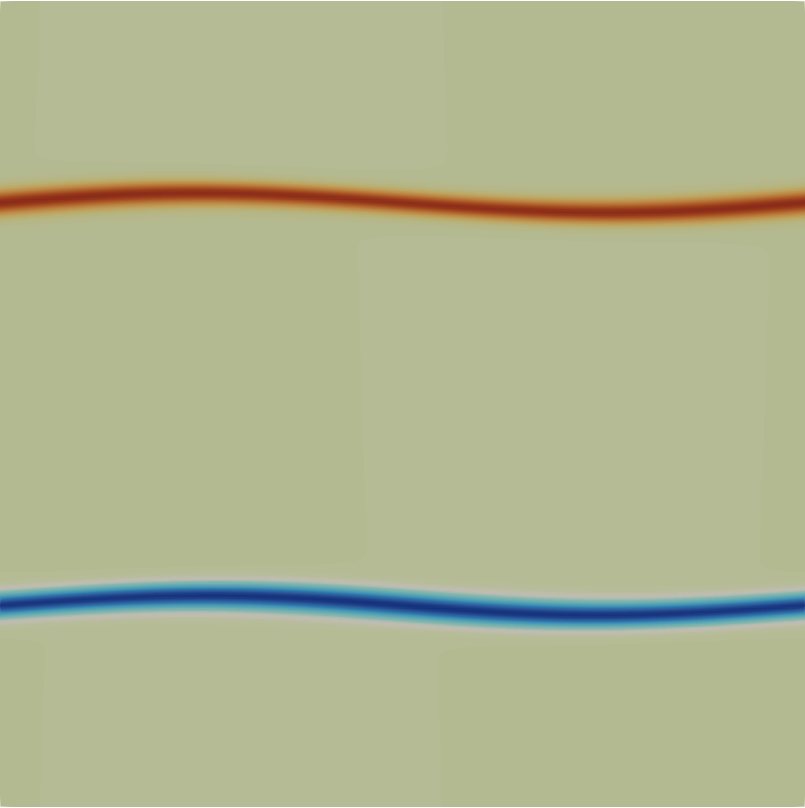}
        \caption{$t = 0.2$}
        \label{fig:thin_sub100}
    \end{subfigure}
    \hfill
    \begin{subfigure}[b]{0.3\textwidth}
        \includegraphics[width=\linewidth, height=\linewidth]{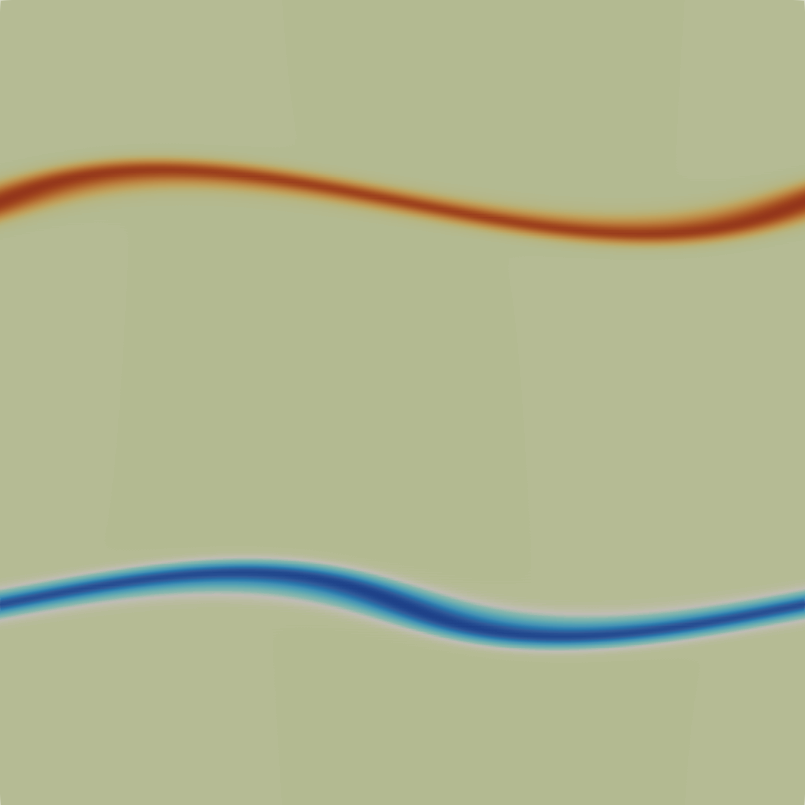}
        \caption{$t = 0.4$}
        \label{fig:thin_sub200}
    \end{subfigure}
    \hfill
    \begin{subfigure}[b]{0.3\textwidth}
        \includegraphics[width=\linewidth, height=\linewidth]{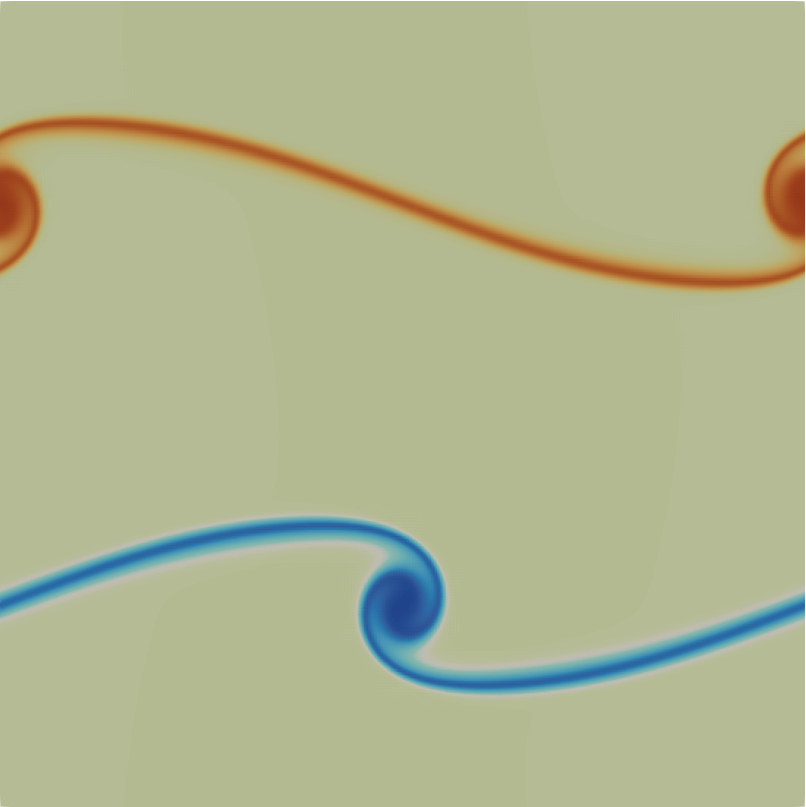}
        \caption{$t = 0.6$}
        \label{fig:thin_sub300}
    \end{subfigure}
    
   % \vspace{0.01cm}
    \vspace{-0.2em} 
    \begin{subfigure}[b]{0.3\textwidth}
        \includegraphics[width=\linewidth, height=\linewidth]{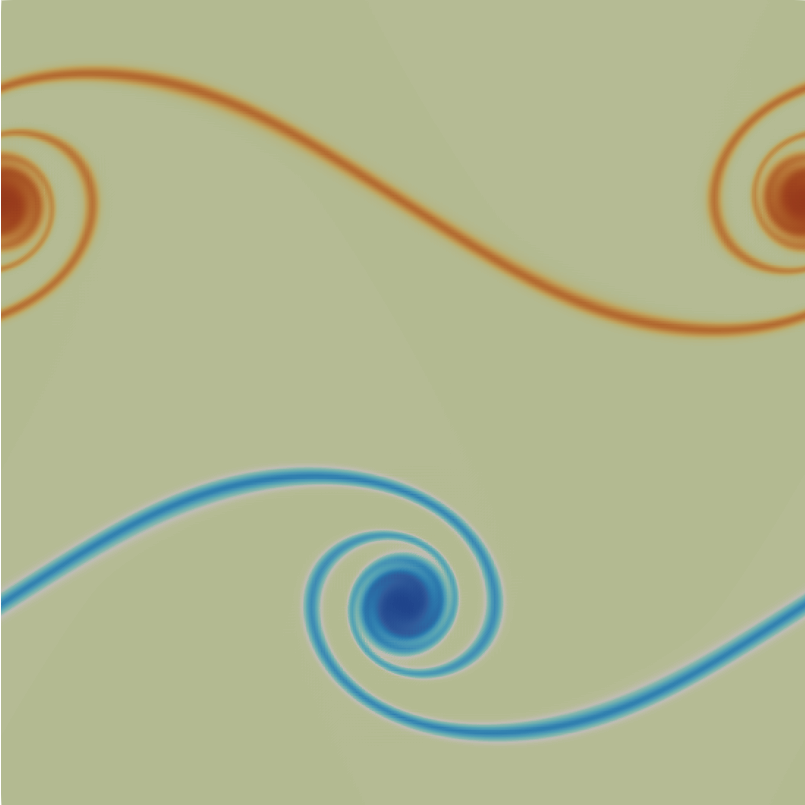}
        \caption{$t = 0.8$}
        \label{fig:thin_sub400}
    \end{subfigure}
    \hfill
    \begin{subfigure}[b]{0.3\textwidth}
        \includegraphics[width=\linewidth, height=\linewidth]{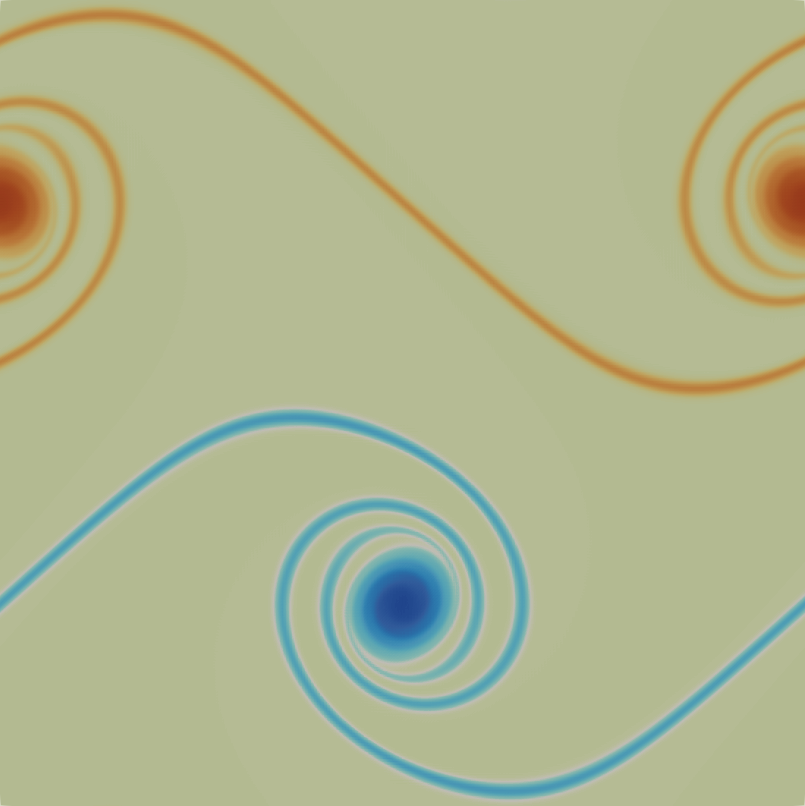}
        \caption{$t = 1.0$}
        \label{fig:thin_sub500}
    \end{subfigure}
    \hfill
    \begin{subfigure}[b]{0.3\textwidth}
        \includegraphics[width=\linewidth, height=\linewidth]{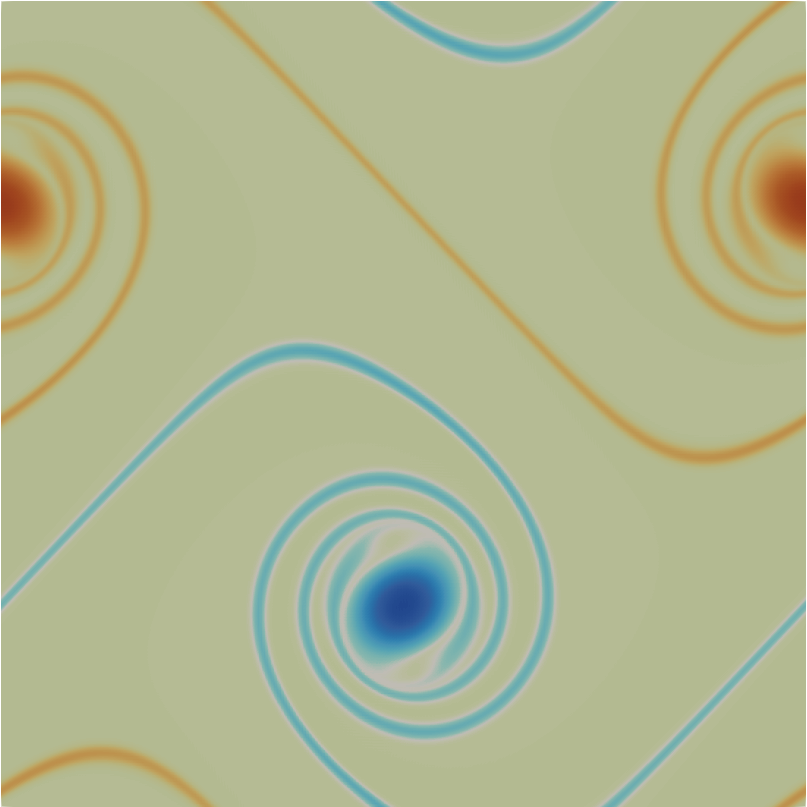}
        \caption{$t = 1.2$}
        \label{fig:thin_sub600}
    \end{subfigure}
    \vspace{-0.2em} 
    \caption{Snapshots of vorticity for the thin shear layer problem computed by using the BDF$6$ scheme with $\nu$ = 0.00005 at different times.}
    \label{fig:sequence20}
    \end{figure}

%\section{Summary}
%\label{section:summary}
%We propose and analyze a class of fully discrete finite element schemes based on IMEX-BDF$k$ time stepping for the incompressible Navier-Stokes equations with no-slip boundary conditions. 
%This work rigorously establishes the stability and uniform boundedness of the numerical solution without any restrictive condition between the time step and the mesh size. Furthermore, we derive spatially optimal $L^2$- and $H^1$-error estimates  for the velocity, as well as optimal $L^2$-error estimates for the pressure in  three-dimensional setting, while  achieving full  $k$th-order  temporal convergence for both variables  for all $k=1,\cdots,6$. That is to say,  for all $m+1\leq N$,   the following estimates hold
%\begin{equation*}
%\begin{split}
%&\Vert\bm{u}^{m+1}-\bm{u}_h^{m+1}\Vert_0%+\tau \frac{1-\mu_k^2}{2}\sum_{n=1}^m\nu\Vert\nabla(\bm{u}^n-\bm{u}_h^n)\Vert_0^2
%\approx \mathcal{O}\left(h^{l+1}+\tau^{k}\right),\quad
%\Vert\nabla\bm{u}^{m+1}-\nabla\bm{u}_h^{m+1}\Vert_0\approx \mathcal{O}\left(h^{l}+\tau^{k}\right),\\[2mm]
%&\tau  \sum_{n=k-1}^{m}\Vert p^{n+1}-p_h^{n+1}\Vert_0^2\approx \mathcal{O}\left(h^{2l}+\tau^{2k}\right).
%\end{split}
%\end{equation*}
%To validate the theoretical analysis, a series of numerical experiments are conducted. These simulations confirm not only the validity of the theoretical convergence rates but also the stability and efficiency of the proposed schemes. 

\section{Conclusion}
\label{section:summary}

In this work, we have developed and  analyzed a class of fully discrete IMEX-BDF$k$ finite element schemes for the incompressible Navier-Stokes equations,  for  temporal orders $k=1,\cdots,6$.  The analysis provides a unified finite element  framework for BDF$k$ time discretizations and establishes stability as well as optimal error estimates for the  fully discrete approximations.

By treating the nonlinear convection term explicitly while maintaining an implicit discretization for the viscous and pressure terms, the proposed formulation yields a sequence of  linear problems at each time step, thereby reducing computational cost while preserving  energy stability. From a theoretical perspective, the main contribution lies in the development of a unified stability and error analysis applicable to all BDF orders up to six. In particular, we prove stability and uniform boundedness of the fully discrete solutions without any restrictive condition between the time step and the mesh size. Moreover, optimal error estimates are derived in the three-dimensional setting, including optimal $L^{2}$- and $H^{1}$-convergence rates for the velocity and optimal $L^{2}$-convergence rates for the pressure. The schemes achieve full $k$th-order  temporal accuracy for both velocity and pressure.

Although BDF schemes have been extensively studied in the literature, the available rigorous analyses for higher-order fully discrete finite element approximations are considerably more restrictive. Existing results for third-, fourth-, and fifth-order BDF finite element schemes typically rely on CFL-type restrictions of the form  $\tau\leq Ch^{\alpha}$. By contrast, the present work removes such restrictions  and establishes CFL-free  stability together with optimal convergence for fully discrete IMEX-BDF finite element schemes of orders $k=1,\cdots,6$. To the best of our knowledge, this is the first rigorous stability and convergence analysis of a sixth-order IMEX-BDF finite element discretization of the incompressible  Navier-Stokes equations.  The analysis of the sixth-order scheme requires several new ingredients, including a telescoping $G$-energy construction, a finite-interval Toeplitz coercivity estimate, and projection-based consistency and multistep identities that avoid unnecessarily strong temporal regularity assumptions on the pressure and retain full sixth-order accuracy under the explicit treatment of convection.

%The numerical experiments  support  the theoretical convergence rates and demonstrate the practical performance of the  high-order schemes. In particular, the fourth- and sixth-order methods accurately resolve the evolution of both thick and thin double shear layers at small viscosities. The sixth-order scheme remains robust in the high-Reynolds-number tests and captures fine-scale vortical structures without visible loss of coherence. These results indicate that high-order IMEX--BDF discretizations are promising for large-scale simulations of convection-dominated incompressible flows with complex multiscale dynamics.

Beyond the finite element pairs considered herein, the  structure of the analysis suggests that the methodology can be extended to a broader class of inf-sup stable discretizations. In particular, it would be of considerable interest to investigate its extension to divergence-conforming $H(\mathrm{div})$ finite element methods, such as Raviart-Thomas and Brezzi-Douglas-Marini discretizations, which provide a natural framework for exactly divergence-free velocity approximations and pressure-robust formulations.   Overall, the combination of high-order temporal accuracy, stability, and computational efficiency makes the IMEX-BDF$k$ framework developed in this work particularly attractive for the simulation of incompressible flows with complex multiscale dynamics, especially in high-Reynolds-number regimes. Numerical experiments are presented to illustrate the theoretical results and to assess the practical performance of the proposed schemes.  
 In particular, the sixth-order approximation remains  reliable for large Reynolds numbers in the numerical tests, indicating its potential for large-scale computations of convection-dominated incompressible flows and motivating further investigations in high-Reynolds-number and turbulence-resolving simulations.

\begin{appendix}
\section{Proof of the discrete Sobolev estimates}\label{appen}

\begin{proof}[Proof of Lemma \ref{lemmaSobolev}]
The first estimate follows immediately from the three-dimensional Sobolev embedding
\begin{align*}
 \bm{H}_0^1(\Omega)\hookrightarrow \bm{L}^6(\Omega),
\end{align*}
because $\bm{V}_h\subset \bm{H}_0^1(\Omega)$. Hence,
\begin{align*}
                 \|\bm{v}_h\|_{0,6}\le C\|\nabla \bm{v}_h\|_0.
\end{align*}
We next prove the second estimate in Lemma \ref{lemmaSobolev}. Set
\begin{align*}
  \bm{g}_h:=\mathcal{A}_h\bm{v}_h\in \bm{V}_h.
\end{align*}
Let $(\bm{z},r)\in (\bm{H}_0^1(\Omega)\cap \bm{H}^2(\Omega))
\times (H^1(\Omega)\cap Q)$ be the solution of the continuous Stokes problem
\[
\begin{aligned}
 -\Delta\bm{z}+\nabla r &=\bm{g}_h &&\text{in }\Omega,\\
 \nabla\cdot \bm{z}&=0 &&\text{in }\Omega,\\
 \bm{z}&=\bm{0} &&\text{on }\partial\Omega.
\end{aligned}
\]
By the Stokes regularity assumption,
\begin{equation}
 \|\bm{z}\|_{2,2}+\|r\|_{1,2} \le C\|\bm{g}_h\|_0.
 \label{eq:stokes-regularity-discrete-Sobolev}
\end{equation}
Let $(\bm{z}_h,r_h)\in \bm{X}_h^l\times Q_h^{l-1}$ be the corresponding mixed finite element approximation:
\[
\begin{aligned}
 (\nabla \bm{z}_h,\nabla \bm{w}_h)
 -(r_h,\nabla\cdot \bm{w}_h)
   &=(\bm{g}_h,\bm{w}_h)
   &&\forall \bm{w}_h\in \bm{X}_h^l,\\
 (\nabla\cdot \bm{z}_h,q_h)&=0
   &&\forall q_h\in Q_h^{l-1}.
\end{aligned}
\]
In particular, $\bm{z}_h\in \bm{V}_h$. Taking $\bm{w}_h\in \bm{V}_h$ eliminates the
discrete pressure and gives
\begin{align*}
  (\nabla \bm{z}_h,\nabla \bm{w}_h)=(\bm{g}_h,\bm{w}_h).
\end{align*}
On the other hand, the definition of $\mathcal{A}_h$ gives
\begin{align*}
 (\bm{g}_h,\bm{w}_h)
 =(\mathcal A_h\bm{v}_h,\bm{w}_h)
 =(\nabla \bm{v}_h,\nabla \bm{w}_h)
 \qquad\forall \bm{w}_h\in \bm{V}_h.
\end{align*}
Consequently,
\begin{align*}
 (\nabla(\bm{z}_h-\bm{v}_h),\nabla \bm{w}_h)=0
 \qquad\forall \bm{w}_h\in \bm{V}_h.
\end{align*}
Choosing $\bm{w}_h=\bm{z}_h-\bm{v}_h$ shows that
\begin{align*}
  \bm{z}_h=\bm{v}_h.
\end{align*}
Since $(\bm z_h,r_h)$ is the mixed Galerkin approximation of $(\bm z,r)$, the standard mixed finite element error estimate yields
\begin{align*}
\|\nabla(\bm z-\bm z_h)\|_0\le Ch\bigl(\|\bm z\|_{2,2}+\|r\|_{1,2}\bigr).
\end{align*}
Since $\bm z_h=\bm v_h$, the Stokes regularity estimate implies
\begin{equation}
 \|\nabla(\bm{z}-\bm{v}_h)\|_0
 \le Ch\bigl(\|\bm{z}\|_{2,2}+\|r\|_{1,2}\bigr)
 \le Ch\|\bm{g}_h\|_0.
 \label{eq:stokes-H1-error}
\end{equation}
Let $I_h\bm{z}\in \bm{X}_h^l$ be a Scott-Zhang-type quasi-interpolant that preserves the homogeneous boundary condition. On a shape-regular mesh, it satisfies
\begin{equation}
 \|\nabla(\bm{z}-I_h\bm{z})\|_0\le Ch\|\bm{z}\|_{2,2},
 \qquad
 \|\nabla I_h\bm{z}\|_{0,3}\le C\|\nabla \bm{z}\|_{0,3}.
 \label{eq:SZ-properties}
\end{equation}
Combining (\ref{eq:stokes-H1-error}), (\ref{eq:SZ-properties}), and the Stokes regularity estimate gives
\[
\begin{aligned}
 \|\nabla(\bm{v}_h-I_h\bm{z})\|_0
 &\le \|\nabla(\bm{v}_h-\bm{z})\|_0
      +\|\nabla(\bm{z}-I_h\bm{z})\|_0 \le Ch\|\bm{g}_h\|_0.
\end{aligned}
\]
Since $\bm{v}_h-I_h\bm{z}\in \bm{X}_h^l$, the three-dimensional finite element inverse inequality gives
\begin{align}
 \|\nabla(\bm{v}_h-I_h\bm{z})\|_{0,3}
 \le Ch^{-1/2}\|\nabla(\bm{v}_h-I_h\bm{z})\|_0
 \le Ch^{1/2}\|\bm{g}_h\|_0.
 \label{eq:inverse-L3}
\end{align}
We also need an inverse estimate for the discrete Stokes operator. Since $\bm{g}_h\in \bm{V}_h$,
\[
\begin{aligned}
 \|\bm{g}_h\|_0
 &=\sup_{0\ne \bm{w}_h\in \bm{V}_h}
   \frac{(\bm{g}_h,\bm{w}_h)}{\|\bm{w}_h\|_0} \\
 &=\sup_{0\ne \bm{w}_h\in \bm{V}_h}
   \frac{(\nabla \bm{v}_h,\nabla \bm{w}_h)}{\|\bm{w}_h\|_0}\\
 &\le \|\nabla \bm{v}_h\|_0
   \sup_{0\ne \bm{w}_h\in \bm{V}_h}
   \frac{\|\nabla \bm{w}_h\|_0}{\|\bm{w}_h\|_0}\\
 &\le Ch^{-1}\|\nabla \bm{v}_h\|_0.
\end{aligned}
\]
Thus,
\begin{equation}
   h\|\bm{g}_h\|_0\le C\|\nabla \bm{v}_h\|_0.
 \label{eq:Ah-inverse}
\end{equation}
In three dimensions, interpolation between $L^2$ and $L^6$, followed by the Sobolev embedding $H^2(\Omega)\hookrightarrow W^{1,6}(\Omega)$, gives
\begin{align*}
 \|\nabla \bm{z}\|_{0,3}
 &\le
 C\|\nabla \bm{z}\|_0^{1/2}
  \|\nabla \bm{z}\|_{0,6}^{1/2}\le
 C\|\nabla \bm{z}\|_0^{1/2}
  \|\bm{z}\|_{2,2}^{1/2}.
\end{align*}
Moreover, by (\ref{eq:stokes-H1-error}) and (\ref{eq:Ah-inverse}),
\[
\begin{aligned}
 \|\nabla \bm{z}\|_0
 &\le \|\nabla \bm{v}_h\|_0+\|\nabla(\bm{z}-\bm{v}_h)\|_0\\
 &\le \|\nabla \bm{v}_h\|_0+Ch\|\bm{g}_h\|_0\le C\|\nabla \bm{v}_h\|_0.
\end{aligned}
\]
Together with the Stokes regularity estimate, this implies
\begin{equation}
 \|\nabla \bm{z}\|_{0,3}
 \le
 C\|\nabla \bm{v}_h\|_0^{1/2}\|\bm{g}_h\|_0^{1/2}.
 \label{eq:continuous-reconstruction-L3}
\end{equation}
Finally, using (\ref{eq:SZ-properties}), (\ref{eq:inverse-L3}), and (\ref{eq:continuous-reconstruction-L3}), we obtain
\[
\begin{aligned}
 \|\nabla \bm{v}_h\|_{0,3}
 &\le
 \|\nabla(\bm{v}_h-I_h\bm{z})\|_{0,3}
 +\|\nabla I_h\bm{z}\|_{0,3}\\
 &\le
 Ch^{1/2}\|\bm{g}_h\|_0
 +C\|\nabla \bm{z}\|_{0,3}\\
 &\le
 C(h\|\bm{g}_h\|_0)^{1/2}\|\bm{g}_h\|_0^{1/2}
 +C\|\nabla \bm{v}_h\|_0^{1/2}\|\bm{g}_h\|_0^{1/2}\\
 &\le
 C\|\nabla \bm{v}_h\|_0^{1/2}\|\bm{g}_h\|_0^{1/2}.
\end{aligned}
\]
Recalling that $\bm{g}_h=\mathcal{A}_h\bm{v}_h$ proves
\begin{align*}
 \|\nabla \bm{v}_h\|_{0,3}
 \le C\|\nabla \bm{v}_h\|_0^{1/2}
       \|\mathcal{A}_h\bm{v}_h\|_0^{1/2}.
\end{align*}
The constant $C$ is independent of $h$ and depends only on the domain, the Stokes regularity constant, the polynomial degree, the discrete inf-sup constant, and the shape-regularity and quasi-uniformity constants of the mesh family.
\end{proof}
\end{appendix}

%\begin{acknowledgements}
%If you'd like to thank anyone, place your comments here
%and remove the percent signs.
%\end{acknowledgements}

% BibTeX users please use one of
%\bibliographystyle{spbasic}      % basic style, author-year citations
%\bibliographystyle{spmpsci}      % mathematics and physical sciences
%\bibliographystyle{spphys}       % APS-like style for physics
%\bibliography{}   % name your BibTeX data base

% Non-BibTeX users please use

%\bibliographystyle{siam}
%\bibliography{mybib.bib}

%\bibliography{IMANUM-refs}

%    Text of article.

%    Bibliographies can be prepared with BibTeX using amsplain,
%    amsalpha, or (for "historical" overviews) natbib style.
\bibliographystyle{amsplain}
\bibliography{mybib.bib}

@article{Qiuhe2026ima,
    author = {Qiu, Changxin and Wang, Kai and He, Xiaoming and Lin, Yanping},
    title = {Analysis of a joint Stokes–Darcy Ritz-projection and multi-step BDF schemes for decoupling the unsteady Navier–Stokes–Darcy model},
    journal = {IMA Journal of Numerical Analysis},
    pages = {drag037},
    year = {2026},
    month = {06},
    issn = {0272-4979},
    doi = {10.1093/imanum/drag037},
    url = {https://doi.org/10.1093/imanum/drag037},
    eprint = {https://academic.oup.com/imajna/advance-article-pdf/doi/10.1093/imanum/drag037/68536182/drag037.pdf},
}

@misc{Bosco2025arxiv,
      title={Error analysis of BDF schemes for the evolutionary incompressible Navier--Stokes equations}, 
      author={Bosco García-Archilla and V. John and Julia Novo},
      year={2025, arXiv},
      eprint={2506.16917},
      archivePrefix={arXiv},
      primaryClass={math.NA},
      url={https://arxiv.org/abs/2506.16917}, 
}

@article {HanYongbin2023mc,
    AUTHOR = {Han, Yongbin and Hou, Yanren and Zhang, Min},
     TITLE = {Analysis of divergence-free {$H^1$} conforming {FEM} with
              {IMEX}-{SAV} scheme for the {N}avier-{S}tokes equations at
              high {R}eynolds number},
   JOURNAL = {Math. Comp.},
  FJOURNAL = {Mathematics of Computation},
    VOLUME = {92},
      YEAR = {2023},
    NUMBER = {340},
     PAGES = {557--582},
      ISSN = {0025-5718,1088-6842},
   MRCLASS = {65M60 (35Q30 65M12)},
  MRNUMBER = {4524102},
MRREVIEWER = {Zhu\ Wang},
}

@article {Allendes2021sisc,
    AUTHOR = {Allendes, Alejandro and Barrenechea, Gabriel R. and Novo,
              Julia},
     TITLE = {A divergence-free stabilized finite element method for the
              evolutionary {N}avier-{S}tokes equations},
   JOURNAL = {SIAM J. Sci. Comput.},
  FJOURNAL = {SIAM Journal on Scientific Computing},
    VOLUME = {43},
      YEAR = {2021},
    NUMBER = {6},
     PAGES = {A3809--A3836},
      ISSN = {1064-8275,1095-7197},
   MRCLASS = {65M60 (35Q30 65M12 65M15 76D05)},
  MRNUMBER = {4336329},
MRREVIEWER = {Mustafa\ Aggul},
       DOI = {10.1137/21M1394709},
       URL = {https://doi.org/10.1137/21M1394709},
}

@article {HanYongbin2022ima,
    AUTHOR = {Han, Yongbin and Hou, Yanren},
     TITLE = {Semirobust analysis of an {$\rm H(div)$}-conforming {DG}
              method with semi-implicit time-marching for the evolutionary
              incompressible {N}avier-{S}tokes equations},
   JOURNAL = {IMA J. Numer. Anal.},
  FJOURNAL = {IMA Journal of Numerical Analysis},
    VOLUME = {42},
      YEAR = {2022},
    NUMBER = {2},
     PAGES = {1568--1597},
      ISSN = {0272-4979,1464-3642},
   MRCLASS = {65M60 (65M12 65M15 76D05)},
  MRNUMBER = {4410752},
}

@article {Bermejo2012sinum,
    AUTHOR = {Bermejo, R. and Gal\'{a}n del Sastre, P. and Saavedra, L.},
     TITLE = {A second order in time modified {L}agrange-{G}alerkin finite
              element method for the incompressible {N}avier-{S}tokes
              equations},
   JOURNAL = {SIAM J. Numer. Anal.},
  FJOURNAL = {SIAM Journal on Numerical Analysis},
    VOLUME = {50},
      YEAR = {2012},
    NUMBER = {6},
     PAGES = {3084--3109},
      ISSN = {0036-1429,1095-7170},
   MRCLASS = {65M60 (65M25 76D05 76M10)},
  MRNUMBER = {3022255},
MRREVIEWER = {Shawn\ W.\ Walker},
       DOI = {10.1137/11085548X},
       URL = {https://doi.org/10.1137/11085548X},
}

@article {Archilla2023ima,
    AUTHOR = {Garc\'{\i}a-Archilla, Bosco and Novo, Julia},
     TITLE = {Robust error bounds for the {N}avier-{S}tokes equations using
              implicit-explicit second-order {BDF} method with variable
              steps},
   JOURNAL = {IMA J. Numer. Anal.},
  FJOURNAL = {IMA Journal of Numerical Analysis},
    VOLUME = {43},
      YEAR = {2023},
    NUMBER = {5},
     PAGES = {2892--2933},
      ISSN = {0272-4979,1464-3642},
   MRCLASS = {65M25 (76D05)},
  MRNUMBER = {4648524},
MRREVIEWER = {Jian\ Li},
       DOI = {10.1093/imanum/drac058},
       URL = {https://doi.org/10.1093/imanum/drac058},
}

@article {Diegel2017,
    AUTHOR = {Diegel, Amanda E. and Wang, Cheng and Wang, Xiaoming and Wise,
              Steven M.},
     TITLE = {Convergence analysis and error estimates for a second order
              accurate finite element method for the
              {C}ahn-{H}illiard-{N}avier-{S}tokes system},
   JOURNAL = {Numer. Math.},
  FJOURNAL = {Numerische Mathematik},
    VOLUME = {137},
      YEAR = {2017},
    NUMBER = {3},
     PAGES = {495--534},
      ISSN = {0029-599X,0945-3245},
   MRCLASS = {65M60 (35K55 35Q35 65M12 65M15)},
  MRNUMBER = {3712284},
MRREVIEWER = {Daniele\ Antonio\ Di Pietro},
       DOI = {10.1007/s00211-017-0887-5},
       URL = {https://doi.org/10.1007/s00211-017-0887-5},
}

@article {Obbadi2025cmame,
    AUTHOR = {Obbadi, Anouar and El-Amrani, Mofdi and Seaid, Mohammed and
              Yakoubi, Driss},
     TITLE = {A stable second-order splitting method for incompressible
              {N}avier-{S}tokes equations using the scalar auxiliary
              variable approach},
   JOURNAL = {Comput. Methods Appl. Mech. Engrg.},
  FJOURNAL = {Computer Methods in Applied Mechanics and Engineering},
    VOLUME = {437},
      YEAR = {2025},
     PAGES = {Paper No. 117801, 24},
      ISSN = {0045-7825,1879-2138},
   MRCLASS = {76M20 (65M22 76D05)},
  MRNUMBER = {4859092},
       DOI = {10.1016/j.cma.2025.117801},
       URL = {https://doi.org/10.1016/j.cma.2025.117801},
}

@article{JiBingquan2025ima,
    author = {Ji, Bingquan and Gong, Yuezheng and Wang, Yushun and Zhao, Xuan},
    title = {Time-grid independent error analysis of adaptive predictor-corrector BDF2 scheme for the unsteady Navier–Stokes equations with high Reynolds number},
    journal = {IMA Journal of Numerical Analysis},
    pages = {draf094},
    year = {2025},
    month = {10},
}

@article {JiBingquan2024jsc,
    AUTHOR = {Ji, Bingquan and Liao, Hong-lin},
     TITLE = {A unified {$L^2$} norm error analysis of {SAV}-{BDF} schemes
              for the incompressible {N}avier-{S}tokes equations},
   JOURNAL = {J. Sci. Comput.},
  FJOURNAL = {Journal of Scientific Computing},
    VOLUME = {100},
      YEAR = {2024},
    NUMBER = {1},
     PAGES = {Paper No. 5, 25},
      ISSN = {0885-7474},
   MRCLASS = {65M15 (65M70 76D05)},
  MRNUMBER = {4748939},
       DOI = {10.1007/s10915-024-02555-9},
       URL = {https://doi.org/10.1007/s10915-024-02555-9},
}

@article {Akrivis2021sinum,
    AUTHOR = {Akrivis, Georgios and Chen, Minghua and Yu, Fan and Zhou, Zhi},
     TITLE = {The energy technique for the six-step {BDF} method},
   JOURNAL = {SIAM J. Numer. Anal.},
  FJOURNAL = {SIAM Journal on Numerical Analysis},
    VOLUME = {59},
      YEAR = {2021},
    NUMBER = {5},
     PAGES = {2449--2472},
      ISSN = {0036-1429},
   MRCLASS = {65M60 (65L06 65M12 65M20)},
  MRNUMBER = {4316580},
MRREVIEWER = {Hamdullah Y\"{u}cel},
       DOI = {10.1137/21M1392656},
       URL = {https://doi.org/10.1137/21M1392656},
}

@article {HeYinnian2008mc,
    AUTHOR = {He, Yinnian},
     TITLE = {The {E}uler implicit/explicit scheme for the 2{D}
              time-dependent {N}avier-{S}tokes equations with smooth or
              non-smooth initial data},
   JOURNAL = {Math. Comp.},
  FJOURNAL = {Mathematics of Computation},
    VOLUME = {77},
      YEAR = {2008},
    NUMBER = {264},
     PAGES = {2097--2124},
      ISSN = {0025-5718},
   MRCLASS = {65M60 (35Q30 76D05 76M25)},
  MRNUMBER = {2429876},
MRREVIEWER = {Lorenzo H\'{e}ctor Ju\'{a}rez},
       DOI = {10.1090/S0025-5718-08-02127-3},
       URL = {https://doi.org/10.1090/S0025-5718-08-02127-3},
}

@article {LiuJie2013sinum,
    AUTHOR = {Liu, Jie},
     TITLE = {Simple and efficient {ALE} methods with provable temporal
              accuracy up to fifth order for the {S}tokes equations on time
              varying domains},
   JOURNAL = {SIAM J. Numer. Anal.},
  FJOURNAL = {SIAM Journal on Numerical Analysis},
    VOLUME = {51},
      YEAR = {2013},
    NUMBER = {2},
     PAGES = {743--772},
      ISSN = {0036-1429},
   MRCLASS = {65M60 (65M12 76D05 76M10)},
  MRNUMBER = {3033031},
MRREVIEWER = {Alexander Ostermann},
       DOI = {10.1137/110825996},
       URL = {https://doi.org/10.1137/110825996},
}

@article {Ahmed2017cmame,
    AUTHOR = {Ahmed, Naveed and Becher, Simon and Matthies, Gunar},
     TITLE = {Higher-order discontinuous {G}alerkin time stepping and local
              projection stabilization techniques for the transient {S}tokes
              problem},
   JOURNAL = {Comput. Methods Appl. Mech. Engrg.},
  FJOURNAL = {Computer Methods in Applied Mechanics and Engineering},
    VOLUME = {313},
      YEAR = {2017},
     PAGES = {28--52},
      ISSN = {0045-7825},
   MRCLASS = {65M60 (65M12 65M15 76D07 76M10)},
  MRNUMBER = {3577930},
       DOI = {10.1016/j.cma.2016.09.026},
       URL = {https://doi.org/10.1016/j.cma.2016.09.026},
}

@book {Hairer1996ode2,
    AUTHOR = {Hairer, E. and Wanner, G.},
     TITLE = {Solving ordinary differential equations. {II}},
    SERIES = {Springer Series in Computational Mathematics},
    VOLUME = {14},
   EDITION = {Second},
      NOTE = {Stiff and differential-algebraic problems},
 PUBLISHER = {Springer-Verlag, Berlin},
      YEAR = {1996},
     PAGES = {xvi+614},
      ISBN = {3-540-60452-9},
   MRCLASS = {65-02 (34A09 34A45 65-01 65Lxx)},
  MRNUMBER = {1439506},
       DOI = {10.1007/978-3-642-05221-7},
       URL = {https://doi.org/10.1007/978-3-642-05221-7},
}

@book {Hairer1993ode1,
    AUTHOR = {Hairer, E. and N\o rsett, S. P. and Wanner, G.},
     TITLE = {Solving ordinary differential equations. {I}},
    SERIES = {Springer Series in Computational Mathematics},
    VOLUME = {8},
   EDITION = {Second},
      NOTE = {Nonstiff problems},
 PUBLISHER = {Springer-Verlag, Berlin},
      YEAR = {1993},
     PAGES = {xvi+528},
      ISBN = {3-540-56670-8},
   MRCLASS = {65-02 (34A50 65L99)},
  MRNUMBER = {1227985},
}

@article {Alessandro2025sinum,
    AUTHOR = {Contri, Alessandro and Kov\'{a}cs, Bal\'{a}zs and Massing, Andr\'{e}},
     TITLE = {Error analysis of {BDF} 1--6 time-stepping methods for the
              transient {S}tokes problem: velocity and pressure estimates},
   JOURNAL = {SIAM J. Numer. Anal.},
  FJOURNAL = {SIAM Journal on Numerical Analysis},
    VOLUME = {63},
      YEAR = {2025},
    NUMBER = {4},
     PAGES = {1586--1616},
      ISSN = {0036-1429},
   MRCLASS = {65M60 (65M12 76M10)},
  MRNUMBER = {4937622},
       DOI = {10.1137/23M1606800},
       URL = {https://doi.org/10.1137/23M1606800},
}

@article {LiXiaoli2023anm,
    AUTHOR = {Li, Xiaoli and Shen, Jie},
     TITLE = {Error estimate of a consistent splitting {GSAV} scheme for the
              {N}avier-{S}tokes equations},
   JOURNAL = {Appl. Numer. Math.},
  FJOURNAL = {Applied Numerical Mathematics. An IMACS Journal},
    VOLUME = {188},
      YEAR = {2023},
     PAGES = {62--74},
      ISSN = {0168-9274},
   MRCLASS = {65M12 (76D05)},
  MRNUMBER = {4563542},
       DOI = {10.1016/j.apnum.2023.03.004},
       URL = {https://doi.org/10.1016/j.apnum.2023.03.004},
}

@article {LiBuyang2022sinum,
    AUTHOR = {Li, Buyang and Ma, Shu and Schratz, Katharina},
     TITLE = {A semi-implicit exponential low-regularity integrator for the
              {N}avier-{S}tokes equations},
   JOURNAL = {SIAM J. Numer. Anal.},
  FJOURNAL = {SIAM Journal on Numerical Analysis},
    VOLUME = {60},
      YEAR = {2022},
    NUMBER = {4},
     PAGES = {2273--2292},
      ISSN = {0036-1429},
   MRCLASS = {65M12 (65M15 76D05)},
  MRNUMBER = {4471050},
       DOI = {10.1137/21M1437007},
       URL = {https://doi.org/10.1137/21M1437007},
}

@article {AnRong2022cnsns,
    AUTHOR = {Li, Yuan and An, Rong},
     TITLE = {Temporal error analysis of a new {E}uler semi-implicit scheme
              for the incompressible {N}avier-{S}tokes equations with
              variable density},
   JOURNAL = {Commun. Nonlinear Sci. Numer. Simul.},
  FJOURNAL = {Communications in Nonlinear Science and Numerical Simulation},
    VOLUME = {109},
      YEAR = {2022},
     PAGES = {Paper No. 106330, 17},
      ISSN = {1007-5704},
   MRCLASS = {65M60 (65M15 76D05)},
  MRNUMBER = {4383081},
       DOI = {10.1016/j.cnsns.2022.106330},
       URL = {https://doi.org/10.1016/j.cnsns.2022.106330},
}

@article {Anderson2021,
    AUTHOR = {Anderson, Robert and Andrej, Julian and Barker, Andrew and et
              al.},
     TITLE = {M{FEM}: {A} modular finite element methods library},
   JOURNAL = {Comput. Math. Appl.},
  FJOURNAL = {Computers \& Mathematics with Applications. An International
              Journal},
    VOLUME = {81},
      YEAR = {2021},
     PAGES = {42--74},
      ISSN = {0898-1221},
   MRCLASS = {65N30 (65Y15)},
  MRNUMBER = {4189802},
}

@article {Bakermathcomp1982,
    AUTHOR = {Baker, Garth A. and Dougalis, Vassilios A. and Karakashian,
              Ohannes A.},
     TITLE = {On a higher order accurate fully discrete {G}alerkin
              approximation to the {N}avier-{S}tokes equations},
   JOURNAL = {Math. Comp.},
  FJOURNAL = {Mathematics of Computation},
    VOLUME = {39},
      YEAR = {1982},
    NUMBER = {160},
     PAGES = {339--375},
      ISSN = {0025-5718},
   MRCLASS = {65M60 (65N30 76D05)},
  MRNUMBER = {669634},
       DOI = {10.2307/2007319},
       URL = {https://doi.org/10.2307/2007319},
}

@article {WangXiaomingnm2012,
    AUTHOR = {Wang, Xiaoming},
     TITLE = {An efficient second order in time scheme for approximating
              long time statistical properties of the two dimensional
              {N}avier-{S}tokes equations},
   JOURNAL = {Numer. Math.},
  FJOURNAL = {Numerische Mathematik},
    VOLUME = {121},
      YEAR = {2012},
    NUMBER = {4},
     PAGES = {753--779},
      ISSN = {0029-599X},
   MRCLASS = {76M20 (65M06 76D06 76F20)},
  MRNUMBER = {2945615},
MRREVIEWER = {Yinnian He},
       DOI = {10.1007/s00211-012-0450-3},
       URL = {https://doi.org/10.1007/s00211-012-0450-3},
}

@article {Ingramint2013,
    AUTHOR = {Ingram, Ross},
     TITLE = {Unconditional convergence of high-order extrapolations of the
              {C}rank-{N}icolson, finite element method for the
              {N}avier-{S}tokes equations},
   JOURNAL = {Int. J. Numer. Anal. Model.},
  FJOURNAL = {International Journal of Numerical Analysis and Modeling},
    VOLUME = {10},
      YEAR = {2013},
    NUMBER = {2},
     PAGES = {257--297},
      ISSN = {1705-5105},
   MRCLASS = {65M60 (65M12 76D05)},
  MRNUMBER = {3055600},
MRREVIEWER = {Daniele Boffi},
}

@article {Hesinum2003,
    AUTHOR = {He, Yinnian},
     TITLE = {Two-level method based on finite element and
              {C}rank-{N}icolson extrapolation for the time-dependent
              {N}avier-{S}tokes equations},
   JOURNAL = {SIAM J. Numer. Anal.},
  FJOURNAL = {SIAM Journal on Numerical Analysis},
    VOLUME = {41},
      YEAR = {2003},
    NUMBER = {4},
     PAGES = {1263--1285},
      ISSN = {0036-1429},
   MRCLASS = {65M60 (35Q30 76D05 76M10)},
  MRNUMBER = {2034880},
MRREVIEWER = {Long An Ying},
       DOI = {10.1137/S0036142901385659},
       URL = {https://doi.org/10.1137/S0036142901385659},
}

@book {Temam1983,
    AUTHOR = {Temam, Roger},
     TITLE = {Navier-{S}tokes equations and nonlinear functional analysis},
    SERIES = {CBMS-NSF Regional Conference Series in Applied Mathematics},
    VOLUME = {41},
 PUBLISHER = {Society for Industrial and Applied Mathematics (SIAM),
              Philadelphia, PA},
      YEAR = {1983},
     PAGES = {xii+122},
      ISBN = {0-89871-183-5},
   MRCLASS = {35Q10 (46N05 58D25 65M60 65N30 76D05)},
  MRNUMBER = {764933},
MRREVIEWER = {Howard Swann},
}

@article {ChengKelong2016,
    AUTHOR = {Cheng, Kelong and Wang, Cheng},
     TITLE = {Long time stability of high order multistep numerical schemes
              for two-dimensional incompressible {N}avier-{S}tokes
              equations},
   JOURNAL = {SIAM J. Numer. Anal.},
  FJOURNAL = {SIAM Journal on Numerical Analysis},
    VOLUME = {54},
      YEAR = {2016},
    NUMBER = {5},
     PAGES = {3123--3144},
      ISSN = {0036-1429},
   MRCLASS = {65M70 (65M12 76D05 76M22)},
  MRNUMBER = {3564779},
MRREVIEWER = {Temur Jangveladze},
       DOI = {10.1137/16M1061588},
       URL = {https://doi.org/10.1137/16M1061588},
}

@article {HuangFukeng2021SIAM,
    AUTHOR = {Huang, Fukeng and Shen, Jie},
     TITLE = {Stability and error analysis of a class of high-order {IMEX}
              schemes for {N}avier-{S}tokes equations with periodic boundary
              conditions},
   JOURNAL = {SIAM J. Numer. Anal.},
  FJOURNAL = {SIAM Journal on Numerical Analysis},
    VOLUME = {59},
      YEAR = {2021},
    NUMBER = {6},
     PAGES = {2926--2954},
      ISSN = {0036-1429},
   MRCLASS = {65M70 (65M12 65M15 76D05)},
  MRNUMBER = {4342123},
MRREVIEWER = {Jean-Pierre Croisille},
       DOI = {10.1137/21M1404144},
       URL = {https://doi.org/10.1137/21M1404144},
}

@article {Nevanlinna1981,
    AUTHOR = {Nevanlinna, Olavi and Odeh, F.},
     TITLE = {Multiplier techniques for linear multistep methods},
   JOURNAL = {Numer. Funct. Anal. Optim.},
  FJOURNAL = {Numerical Functional Analysis and Optimization. An
              International Journal},
    VOLUME = {3},
      YEAR = {1981},
    NUMBER = {4},
     PAGES = {377--423},
      ISSN = {0163-0563},
   MRCLASS = {65L05},
  MRNUMBER = {636736},
MRREVIEWER = {Peter Alfeld},
       DOI = {10.1080/01630568108816097},
       URL = {https://doi.org/10.1080/01630568108816097},
}

@Article{Hiptmair2002,
  author     = {Hiptmair, R.},
  title      = {Finite elements in computational electromagnetism},
  journal    = {Acta Numer.},
  year       = {2002},
  volume     = {11},
  pages      = {237--339},
  issn       = {0962-4929},
  doi        = {10.1017/S0962492902000041},
  fjournal   = {Acta Numerica},
  mrclass    = {78M10 (65N30)},
  mrnumber   = {2009375},
  mrreviewer = {JiChun Li},
  url        = {https://doi.org/10.1017/S0962492902000041},
}

@Article{Heywood1990,
  author   = {Heywood, John G. and Rannacher, Rolf},
  title    = {Finite-element approximation of the nonstationary {N}avier-{S}tokes problem. {IV}. {E}rror analysis for second-order time discretization},
  journal  = {SIAM J. Numer. Anal.},
  year     = {1990},
  volume   = {27},
  number   = {2},
  pages    = {353--384},
  issn     = {0036-1429},
  fjournal = {SIAM Journal on Numerical Analysis},
  mrclass  = {65N30 (76D05 76M10)},
  mrnumber = {1043610},
}

@Article{Heywood1982,
  author   = {Heywood, J. G. and Rannacher, Rolf},
  title    = {Finite element approximation of the nonstationary {N}avier-{S}tokes problem. {I}. {R}egularity of solutions and second-order error estimates for spatial discretization},
  journal  = {SIAM J. Numer. Anal.},
  year     = {1982},
  volume   = {19},
  number   = {2},
  pages    = {275--311},
  issn     = {0036-1429},
  doi      = {10.1137/0719018},
  fjournal = {SIAM Journal on Numerical Analysis},
  mrclass  = {65M60 (35Q10 76D05)},
  mrnumber = {650052},
  url      = {https://doi.org/10.1137/0719018},
}

@Book{Girault1986,
  title      = {Finite element methods for {N}avier-{S}tokes equations},
  publisher  = {Springer-Verlag, Berlin},
  year       = {1986},
  author     = {Girault, Vivette and Raviart, Pierre-Arnaud},
  volume     = {5},
  series     = {Springer Series in Computational Mathematics},
  isbn       = {3-540-15796-4},
  note       = {Theory and algorithms},
  doi        = {10.1007/978-3-642-61623-5},
  mrclass    = {65N30 (65-02 76-08)},
  mrnumber   = {851383},
  mrreviewer = {Max D. Gunzburger},
  pages      = {x+374},
  url        = {https://doi.org/10.1007/978-3-642-61623-5},
}

@Book{Brezzi1991,
  title      = {Mixed and hybrid finite element methods},
  publisher  = {Springer-Verlag, New York},
  year       = {1991},
  author     = {Brezzi, Franco and Fortin, Michel},
  volume     = {15},
  series     = {Springer Series in Computational Mathematics},
  isbn       = {0-387-97582-9},
  doi        = {10.1007/978-1-4612-3172-1},
  mrclass    = {65N30 (65-02 73V05 76M10)},
  mrnumber   = {1115205},
  mrreviewer = {Lubor Malina},
  pages      = {x+350},
  url        = {https://doi.org/10.1007/978-1-4612-3172-1},
}

@Article{AitOuAmmi1994,
  author     = {Ait Ou Ammi, Ali and Marion, Martine},
  title      = {Nonlinear {G}alerkin methods and mixed finite elements: two-grid algorithms for the {N}avier-{S}tokes equations},
  journal    = {Numer. Math.},
  year       = {1994},
  volume     = {68},
  number     = {2},
  pages      = {189--213},
  issn       = {0029-599X},
  fjournal   = {Numerische Mathematik},
  mrclass    = {65N30 (35Q30 65M60 76D05 76M10)},
  mrnumber   = {1283337},
  mrreviewer = {Wolfgang Moldenhauer},
}
%    Insert the bibliography data here.

\end{document}